\renewcommand{\leq}{\leqslant}
\renewcommand{\geq}{\geqslant}
\numberwithin{equation}{section}
\title[Collapsing Calabi-Yau manifolds and K\"ahler-Ricci flows]{On the collapsing of Calabi-Yau manifolds and K\"ahler-Ricci flows}
\author{Yang Li}
\address{Massachusetts Institute of Technology, 77 Massachusetts Avenue, Cambridge, MA 02139}
\email{yangmit@mit.edu}
\author{Valentino Tosatti}
\address{Courant Institute of Mathematical Sciences, 251 Mercer St, New York, NY 10012}
\email{tosatti@cims.nyu.edu}
\newtheorem{thm}{Theorem}[section]
\newtheorem{lem}[thm]{Lemma}
\newtheorem{prop}[thm]{Proposition}
\newtheorem{conj}[thm]{Conjecture}
\theoremstyle{definition}
\newtheorem{rmk}[thm]{Remark}
\newcommand{\C}{\mathbb{C}}
\newcommand{\ddbar}{i\partial\bar{\partial}}
\newcommand{\ov}[1]{\overline{#1}}
\newcommand{\ti}[1]{\tilde{#1}}
\newcommand{\ve}{\varepsilon}
\newcommand{\vp}{\varphi}
\newcommand{\de}{\partial}
\newcommand{\db}{\overline{\partial}}
\newcommand{\tr}[2]{\textrm{tr}_{#1} #2}
\DeclareMathOperator{\Vol}{Vol} 
\DeclareMathOperator{\Ric}{Ric}
\def\XXint#1#2#3{{\setbox0=\hbox{$#1{#2#3}{\int}$ }
		\vcenter{\hbox{$#2#3$ }}\kern-.6\wd0}}
\begin{document}
\begin{abstract}We study the collapsing of Calabi-Yau metrics and of K\"ahler-Ricci flows on fiber spaces where the base is smooth. We identify the collapsed Gromov-Hausdorff limit of the K\"ahler-Ricci flow when the divisorial part of the discriminant locus has simple normal crossings. In either setting, we also obtain an explicit bound for the real codimension $2$ Hausdorff measure of the Cheeger-Colding singular set, and identify a sufficient condition from birational geometry to understand the metric behavior of the limiting metric on the base.
\end{abstract}	
	\maketitle

\section{Introduction}

\medskip

In this paper we study the collapsing behavior of Ricci-flat K\"ahler metrics on Calabi-Yau manifolds, and of long-time solutions of the K\"ahler-Ricci flow. We first describe in detail these two setups, which have been much studied recently, and state the main open problems that we are interested in.

\subsection{Calabi-Yau}\label{setupcy}
$M^m$ is a projective Calabi-Yau manifold with $K_M\cong\mathcal{O}_M$, with a trivialization $\Omega$ of $K_M$, equipped with a holomorphic line bundle $\mathcal{L}$ which is semiample and with Iitaka dimension $n:=\kappa(\mathcal{L})$ that satisfies $0<n<m$. Then there is some $\ell$ sufficiently divisible such that the linear system $|\ell\mathcal{L}|$ defines a fiber space structure $f:M\to N$ (surjective holomorphic map with connected fibers) onto a normal projective variety $N^n$ with $0<n<m$. Let $D\subset N$ denote the closed subvariety given by the union of the singularities of $N$ together with the critical values of $f$ on $N^{\rm reg}$, and write $S=f^{-1}(D)$ and $D=D^{(1)}\cup D^{(2)}$ where $D^{(1)}$ is the union of all codimension $1$ irreducible components of $D$ and $\dim D^{(2)}\leq n-2$. The fibers $M_y=f^{-1}(y)$ for $y\in N\backslash D$ are Calabi-Yau $(m-n)$-folds. We will also denote by $N^\circ=N\backslash D, M^\circ=M\backslash S$.

Given a K\"ahler metric $\omega_N$ on $N$ (in the sense of analytic spaces \cite{Moi} if $N$ is not smooth) and a Ricci-flat K\"ahler metric $\omega_M$ on $M$, we are interested in the behavior of the Ricci-flat metrics $\omega(t)$ on $M$ cohomologous to $f^*\omega_N+e^{-t}\omega_N, t\geq 0$, in the limit as $t\to\infty$. To identify the limit, one solves \cite{ST, To0} the complex Monge-Amp\`ere equation on $N^\circ$
\begin{equation}\label{macy}
(\omega_N+\ddbar\vp)^n=f_*(\omega_M^m)\frac{\int_N\omega_N^n}{\int_M\omega_M^m},
\end{equation}
where $\omega_{\rm can}:=\omega_N+\ddbar\vp$ is a K\"ahler metric on $N^\circ$ and $\vp\in C^0(N)$ (for continuity, see \cite{CGZ,DZ,GGZ}).
After earlier work in \cite{To0,GTZ,TWY,HT,HT2}, it was very recently shown in \cite{HT3} that $\omega(t)\to f^*\omega_{\rm can}$ in $C^{\infty}_{\rm loc}(M^\circ,g_M)$.

In \cite{STZ} it was proved that the metric completion $(Z,d_Z)$ of $(N^\circ, \omega_{\rm can})$ is a compact metric space and that $(M,\omega(t))\to (Z,d_Z)$ in the Gromov-Hausdorff topology (see also \cite{GTZ2,TZ2} for earlier results in this direction). The following questions, raised in \cite{To2,To3,To4,GTZ2}, remain open in general:
\begin{conj}\label{ccy}
In the Calabi-Yau setup, the Gromov-Hausdorff limit is homeomorphic to $N$. Furthermore, $Z\backslash N^\circ$ has real Hausdorff codimension at least $2$ inside $(Z,d_Z)$.
\end{conj}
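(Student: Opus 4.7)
The plan is to attack the two parts of the conjecture in sequence, using the $C^0(N)$ regularity of the Monge-Amp\`ere potential $\vp$ and the known $C^{\infty}_{\rm loc}(M^\circ)$ convergence $\omega(t)\to f^*\omega_{\rm can}$ as the main analytic inputs. For the homeomorphism, the goal is to extend the identity on $N^\circ$ to a homeomorphism $\Phi:N\to Z$. Continuity of $\Phi$ as a map $N\to Z$ reduces to showing that every $\omega_N$-Cauchy sequence $p_k\to y\in N$ is $d_Z$-Cauchy; this should follow from a H\"older-type estimate
\[
d_{\omega_{\rm can}}(p,q)\leq C\,d_{\omega_N}(p,q)^{\alpha}
\]
on $N^\circ$, obtained from $\vp\in C^0(N)$ together with $L^p$ control of the Monge-Amp\`ere density in \eqref{macy} via Guo--Phong--Song--Sturm-type estimates for bounded-potential Monge-Amp\`ere equations. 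Surjectivity is then automatic since $\Phi(N)$ is closed and contains the dense set $N^\circ\subset Z$. Injectivity is the nontrivial step: distinct points of $D$ should not be identified in $Z$, which amounts to a capacity-type lower bound on $d_{\omega_{\rm can}}$ between small neighborhoods of distinct $y_1,y_2\in D$; one route is to construct enough $d_Z$-continuous separating functions near $D$ from local plurisubharmonic or holomorphic models on $N$, each shown to extend continuously to $Z$ via the H\"older bound above.

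For the Hausdorff codimension statement, decompose $Z\setminus N^\circ=\Phi(D^{(1)})\cup\Phi(D^{(2)})$. Since $\dim_\C D^{(2)}\leq n-2$, the H\"older bound implies $\Phi(D^{(2)})$ has $d_Z$-Hausdorff codimension at least $2$ (quantitatively, the loss by $1/\alpha$ in Hausdorff dimension is absorbed by the extra complex codimension). The decisive estimate is therefore on $\Phi(D^{(1)})$: after reducing to the case of an SNC discriminant (via a log resolution or semistable reduction of $f$, the birational input hinted at in the abstract), I would cover $D^{(1)}$ by small polydiscs adapted to its components, estimate the $d_Z$-diameter of each polydisc using the explicit $|s|^{-2a}(\log|s|)^b$ asymptotics of the Weil-Petersson density $f_*(\omega_M^m)/\omega_N^n$ transverse to each component, and sum via a Vitali covering to bound the $(2n-2)$-dimensional Hausdorff measure of $\Phi(D^{(1)})$.

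The principal obstacle is the sharp distance control transverse to $D^{(1)}$. The continuity of $\vp$ alone is too weak in the normal direction, while the Monge-Amp\`ere density genuinely blows up there, so $\omega_{\rm can}$ may simultaneously shrink and blow up in competing directions and it is not a priori clear that small $\omega_N$-neighborhoods of $D^{(1)}$ have small $d_Z$-diameter (nor that they remain separated from other points of $D$, which is what injectivity requires). Resolving this appears to require working on a semistable model of $f$ and combining the resulting Weil-Petersson asymptotics with Hodge-theoretic bounds on the monodromy around $D^{(1)}$ to extract two-sided pointwise control of $\omega_{\rm can}$ in adapted coordinates; once that is in place, both parts of the conjecture should follow along the scheme outlined above.
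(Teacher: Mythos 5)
The statement you are ``proving'' is Conjecture~\ref{ccy}, which is stated as an open conjecture in the paper; the authors explicitly say it ``remains open in general'' and only prove partial results toward it (Theorems~\ref{cy_smoothbase}, \ref{cy_canonical}). So there is no proof of the paper's to compare against, and any claimed proof should be treated with suspicion. Your proposal is a reasonable reconnaissance of the problem, but it leaves open exactly the gaps that make this a conjecture and not a theorem.

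The homeomorphism statement along the lines you sketch is precisely what \cite{STZ} did when $N$ is smooth: they establish a uniform H\"older bound of the form \eqref{4} for the regularized metrics and build the homeomorphism from it. But two of your steps fail in general. First, when $N$ is singular the H\"older estimate is not available; Ko\l odziej/GPSS-type arguments and the argument of \cite{Li} are used in the paper only after the standing assumption that $N$ is smooth, and extending them across singularities of $N$ is genuinely open. Second, your treatment of $D^{(2)}$ via the H\"older bound does not work: an $\alpha$-H\"older map inflates Hausdorff dimension by $1/\alpha$, so $\dim_{\mathcal{H}}\Phi(D^{(2)})\leq(2n-4)/\alpha$, which only beats $2n-2$ if $\alpha\geq(n-2)/(n-1)$, and no such lower bound on $\alpha$ is known. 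The paper avoids this entirely by citing \cite[Prop.\ 3.1]{GTZ3}, which shows $\omega_{\rm can}$ extends \emph{smoothly} across $D^{(2)}$, so one may assume $D=D^{(1)}$ from the start.

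For the codimension statement along $D^{(1)}$, you correctly identify the bottleneck but then assert it can be resolved ``by combining the resulting Weil-Petersson asymptotics with Hodge-theoretic bounds on the monodromy to extract two-sided pointwise control of $\omega_{\rm can}$.'' That two-sided pointwise bound is \emph{exactly} the conjectural estimate \eqref{desired} of \cite{TZ2}, which is the crux of the matter: the asymptotics of the \emph{volume form} $\omega_{\rm can}^n$ are known from \cite{GTZ3, Ki}, but controlling the full metric tensor is a different order of difficulty. The estimate \eqref{desired} is only known when $N$ is smooth and $D$ is snc \cite{GTZ3}, when $\dim N=1$ \cite{GTZ2}, when $M$ is hyperk\"ahler \cite{TZ2}, and---by Theorem~\ref{cy_canonical} of this paper---under the additional algebro-geometric hypothesis that $\Xi_{\ti N}$ is $\pi$-ample on some resolution. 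Your proposal does not supply a proof of \eqref{desired} and hence does not close the conjecture. Note also a subtler point relevant to any future attempt: what the paper's Theorem~\ref{cy_smoothbase} actually bounds is $\mathcal H^{2n-2}$ of the Cheeger--Colding singular set $\mathcal S$, which by Remark~\ref{rkincl} can be a strict subset of $Z\backslash N^\circ$; the conjecture's codimension statement about $Z\backslash N^\circ$ itself is a different (and as yet unproved) assertion.
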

The homeomorphism statement was proved in \cite{STZ} when $N$ is smooth, and the full conjecture is known when $N$ is a curve \cite{GTZ2}, or when $M$ is hyperk\"ahler \cite{TZ2}, or when $N$ is smooth and $D^{(1)}$ has simple normal crossings \cite{GTZ3}.

We remark that the choice of path $f^*[\omega_N]+e^{-t}[\omega_N]$ in cohomology originates in \cite{GW}, and is quite analogous to what happens in the K\"ahler-Ricci flow setup below. Choosing a different path that approaches $f^*[\omega_N]$ in general results in a different behavior \cite[\S 4.4.4]{FT}, and the existing estimates mostly break down.

\subsection{K\"ahler-Ricci flow}\label{setupkrf} $M^m$ is a compact K\"ahler manifold with $K_M$ semiample and with Kodaira dimension $n=\kappa(M)$ that satisfies $0<n<m$. Let $f:M\to N$ be the Iitaka fibration of $M$, which is the fiber space determined by the linear system $|\ell K_M|$ with $\ell$ sufficiently divisible, and $N^n$ is a normal projective variety. Define $D,N^\circ, M^\circ$ as in Setup \ref{setupcy}, and again the fibers $M_y,y\in N^\circ,$ are Calabi-Yau $(m-n)$-folds. Let $\omega_N=\frac{1}{\ell}\omega_{\rm FS}|_N$ so that $f^*\omega_N$ is a smooth semipositive representative of $c_1(K_M)$.

Given a K\"ahler metric $\omega_M$ on $M$, consider the normalized K\"ahler-Ricci flow on $M$
$$\frac{\de}{\de t}\omega(t)=-\Ric(\omega(t))-\omega(t),\quad\omega(0)=\omega_M.$$
The flow exists for all $t\geq 0$ (see e.g. \cite{To}), and we are interested in the behavior as $t\to\infty$. Observe that the metric $\omega(t)$ is cohomologous to $(1-e^{-t})f^*\omega_N+e^{-t}\omega_M$.

In order to identify the limit of the flow, we fix a basis $\{s_i\}$ of $H^0(M,\ell K_M)$ which defines the map $f$, and obtain a smooth positive volume form $\mathcal{M}$ on $M$ by
$$\mathcal{M}=\left((-1)^{\frac{\ell m^2}{2}}\sum_i s_i\wedge\ov{s_i}\right)^\frac{1}{\ell}.$$
On then solves \cite{ST} the complex Monge-Amp\`ere equation on $N^\circ$
\begin{equation}\label{makrf}
(\omega_N+\ddbar\vp)^n=e^\vp f_*(\mathcal{M}),
\end{equation}
where $\omega_{\rm can}:=\omega_N+\ddbar\vp$ is a K\"ahler metric on $N^\circ$ and $\vp\in C^0(N)$.
After earlier work in \cite{ST,FZ,TWY,HT,FL}, it was recently shown in \cite{CL} that $\omega(t)\to f^*\omega_{\rm can}$ in $C^{\alpha}_{\rm loc}(M^\circ)$ as $t\to\infty$, for any $0<\alpha<1$.

 Furthermore, in \cite{JS} it is shown that $\mathrm{diam}(M,\omega(t))\leq C$, for all $t\geq 0$, and \cite{STZ} shows that the metric completion $(Z,d_Z)$ of $(N^\circ, \omega_{\rm can})$ is a compact metric space, which is homeomorphic to $N$ when this is smooth.
\begin{conj}\label{ckrf}
In the K\"ahler-Ricci flow setup, $(M,\omega(t))\to (Z,d_Z)$ in the Gromov-Hausdorff topology. Furthermore, $Z$ is homeomorphic to $N$ and $Z\backslash N^\circ$ has real Hausdorff codimension at least $2$ inside $(Z,d_Z)$.
\end{conj}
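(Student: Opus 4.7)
The plan is to establish the three assertions of Conjecture \ref{ckrf} in turn, paralleling the strategy available in the Calabi-Yau setup of Subsection \ref{setupcy}. The three main inputs are the $C^\alpha_{\rm loc}$ convergence $\omega(t)\to f^*\omega_{\rm can}$ on $M^\circ$ from \cite{CL}, the uniform diameter bound of \cite{JS}, and the identification of the metric completion $(Z, d_Z)$ from \cite{STZ}. As indicated in the abstract, I would work under the simple normal crossings hypothesis on $D^{(1)}$, since this is what makes an explicit local analysis possible; the singular locus of $N$ and $D^{(2)}$ are then handled by codimension, and only the SNC divisor requires real work.

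For the Gromov-Hausdorff convergence, the key reduction is to show that for every $\ve > 0$ there is an open neighborhood $U$ of $D$ in $N$ and a time $t_0$ such that $\mathrm{diam}(f^{-1}(U), \omega(t)) < \ve$ for all $t \geq t_0$. Combined with smooth convergence on compact subsets of $M^\circ$ and the diameter bound, this forces $f : (M, \omega(t)) \to (Z, d_Z)$ to be an $\ve$-Gromov-Hausdorff approximation for $t$ large. The smallness of $f^{-1}(U)$ in the flow metric should follow from the smooth convergence away from $S$ together with parabolic estimates along the flow (Perelman/Zhang-type heat kernel bounds, or a direct gradient estimate for a suitable potential) that control how geodesics traversing $f^{-1}(U)$ can behave.

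The homeomorphism $Z\cong N$ is already known from \cite{STZ} when $N$ is smooth, so the additional input is to extend the identification across both $N^{\rm sing}$ and $D^{(1)}$. Under the SNC hypothesis, in coordinates where $D^{(1)}=\{z_1\cdots z_k=0\}$, the Monge-Amp\`ere equation \eqref{makrf} can be analyzed by computing the precise asymptotics of $f_*(\mathcal{M})$ along each branch (encoded by the log-discrepancies of the Iitaka fibration), and then solving the resulting degenerate Monge-Amp\`ere problem to produce an explicit continuous K\"ahler potential for $\omega_{\rm can}$ in a full neighborhood of $D^{(1)}$. Continuity of $\vp$ across the higher codimension locus $D^{(2)}\cup N^{\rm sing}$ would then follow by a standard capacity / plurisubharmonic extension argument, producing a continuous bijection $N\to Z$ that is automatically a homeomorphism by compactness.

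The real Hausdorff codimension $2$ bound is where I expect the main obstacle to lie. The local model should show that $\omega_{\rm can}$ is bi-Lipschitz to a smooth K\"ahler metric transverse to a smooth branch of $D^{(1)}$, so singular points of $(Z, d_Z)$ can only arise from intersections of branches of $D^{(1)}$ (of real codimension $\geq 4$) together with $D^{(2)}\cup N^{\rm sing}$ (of complex codimension $\geq 2$). The Cheeger-Colding singular set is thus contained in a union of pieces of real codimension $\geq 2$ in $Z$, with Hausdorff measure bounds coming from the volumes of the corresponding algebraic subsets. The main technical difficulty will be to establish the local model \emph{uniformly} all the way up to the deeper strata of the SNC divisor: compared to the Calabi-Yau equation \eqref{macy}, the extra factor $e^{\vp}$ in \eqref{makrf} forces the asymptotic analysis to track both the Monge-Amp\`ere density and the potential simultaneously, which is what makes the K\"ahler-Ricci flow case genuinely harder than its analogues in \cite{STZ}, \cite{GTZ2}, \cite{TZ2}.
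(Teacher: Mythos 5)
Your plan for the Gromov--Hausdorff convergence hinges on a reduction that is false when $\dim N>1$: you ask to show that for every $\ve>0$ there is a neighborhood $U$ of $D$ with $\mathrm{diam}(f^{-1}(U),\omega(t))<\ve$ for $t$ large. The parabolic Schwarz lemma \eqref{schwarz} gives $\omega(t)\geq C^{-1}f^*\omega_N$ uniformly in $t$, so for any path $\ti\gamma$ in $f^{-1}(U)$ one has $L_{\omega(t)}(\ti\gamma)\geq C^{-1/2}L_{\omega_N}(f\circ\ti\gamma)$, and hence $\mathrm{diam}(f^{-1}(U),\omega(t))\gtrsim\mathrm{diam}(D,\omega_N)>0$, a positive quantity independent of how small $U$ is taken. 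This is exactly the obstruction the paper flags in the introduction as the reason the argument of \cite{STZ} (which is valid for $\dim N=1$, where $D$ is a finite set) cannot be extended naively. What actually holds, and what the paper proves, is the much weaker ``almost-convexity'' statement: every point of $\ti U_\ve$ is at $\omega(t)$-distance $\leq\delta'$ from $\partial\ti U_\ve$ (estimate \eqref{glenbenton}), and the complement $(M\setminus\ti U_\ve,\omega(t))$ is almost-convex inside $(M,\omega(t))$, uniformly in $t$ (Proposition \ref{claim4}). Neither says $f^{-1}(U)$ is small; together they say $f^{-1}(U)$ can be collapsed onto its boundary without distorting distances. The first uses the volume estimates of \cite{JS} together with the volume form bound \eqref{volscal}; the second is the real work of the paper and is proved via Perelman's $\mathcal{L}$-geodesics and monotonicity of the reduced volume, combined crucially with the explicit asymptotic control \eqref{quasiisom} of $\omega_{\rm can}$ near $D$. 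Your suggested ``heat kernel bounds or a direct gradient estimate'' does not by itself produce this: the core difficulty is controlling the behavior of minimal $\mathcal{L}$-geodesics that enter $\ti U_\eta$ for $\eta\ll\ve$, and that requires both a spacetime-volume argument (to see that a generic $\mathcal{L}$-geodesic spends little $\tau$-time in $\ti U_\eta$) and the conical comparison \eqref{quasiisom} (to rerout the projected path around $U_{\eta/2}$ at the cost of a $|\log\eta|^C$ factor).

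Two further remarks. First, you also assume the local model makes $\omega_{\rm can}$ ``bi-Lipschitz to a smooth K\"ahler metric transverse to a smooth branch of $D^{(1)}$''; this is not what \eqref{quasiisom} gives. The metric is comparable, up to $\log$-poles, to a \emph{conical} metric with cone angles $2\pi\gamma_i\leq 2\pi$, and when $\gamma_i<1$ this is not bi-Lipschitz to a smooth metric (it still gives $D$ real codimension $2$ in $(Z,d_Z)$, so your qualitative conclusion survives, but the claimed bi-Lipschitz statement is wrong). Second, the paper does not claim Conjecture \ref{ckrf} in full: Theorem \ref{krf_gh} establishes only the Gromov--Hausdorff convergence, and only under the hypothesis that $N$ is smooth and $D^{(1)}$ is SNC; the codimension statement in that setting is already a consequence of \cite{TZ2} and \cite{GTZ3} via \eqref{desired}, and the homeomorphism $Z\cong N$ is from \cite{STZ}. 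So matching the paper means at most proving Theorem \ref{krf_gh}, and for that your central reduction has to be replaced by the almost-convexity scheme.
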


The Gromov-Hausdorff convergence is known when $N$ is a curve and the generic fibers of $f$ are tori \cite{STZ}.

\subsection{Our results} We can now state our results.
In either the Calabi-Yau setup \ref{setupcy} or the K\"ahler-Ricci flow setup \ref{setupkrf}, assume that $N$ is smooth and let $(Z,d_Z)$ be the metric completion of $(N^\circ, \omega_{\rm can})$. Thanks to \cite{STZ} this is a compact metric space homeomorphic to $N$, and in the Calabi-Yau setup \ref{setupcy} it is the Gromov-Hausdorff limit of $(M,\omega(t))$ as $t\to\infty$.

Let $\mathcal{S}\subset Z$ be the singular set in the sense of Cheeger-Colding \cite{CC}, namely the set of all $x\in Z$ such that there is some tangent cone to $(Z,d_Z)$ at $x$ which is not isometric to $\mathbb{R}^{2n}$. We always have $\mathcal{S}\subset Z\backslash N^\circ$, but this inclusion is strict in general (see Remark \ref{rkincl}). Our first result, proved in section \ref{sect2}, is an explicit Hausdorff measure bound for $\mathcal{S}$:

\begin{thm}\label{cy_smoothbase}
In either the Calabi-Yau setup \ref{setupcy} or the K\"ahler-Ricci flow setup \ref{setupkrf}, assume that $N$ is smooth and $[\omega_N]\in H^2(N,\mathbb{Q})$, and let $\mathcal{H}^{2n-2}$ be the real $(2n-2)$-dimensional Hausdorff measure of the limit metric $d_Z$ on $N$. Then the Cheeger-Colding singular set $\mathcal{S}$ satisfies
\begin{equation}\label{volume}
\mathcal{H}^{2n-2}(\mathcal{S})\leq C_n\int_D\omega_N^{n-1},
\end{equation}
where $C_n$ is a dimensional constant.
\end{thm}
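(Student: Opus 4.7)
The plan is to identify the Cheeger--Colding singular set with a subset of the discriminant locus $D$ via the homeomorphism $Z\cong N$ of \cite{STZ}, and to bound its $d_Z$-Hausdorff measure by a cohomological pairing on $D$. Since $(N^\circ,\omega_{\rm can})$ is a smooth Riemannian manifold sitting isometrically inside $(Z,d_Z)$, all tangent cones at interior points are Euclidean, so $\mathcal{S}\cap N^\circ=\emptyset$ and $\mathcal{S}\subset D=D^{(1)}\cup D^{(2)}$. As $\dim_{\mathbb{C}}D^{(2)}\leq n-2$, I would first verify $\mathcal{H}^{2n-2}_{d_Z}(D^{(2)})=0$ by covering it with a controlled number of small $\omega_N$-balls and using a modulus of continuity for the identity $(N,g_N)\to(Z,d_Z)$ (which comes from the continuity of $\vp$ and the compactness of $Z$), thus reducing matters to showing $\mathcal{H}^{2n-2}_{d_Z}(D^{(1)})\leq C_n\int_{D^{(1)}}\omega_N^{n-1}$.

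For this main estimate I would prove a $d_Z$-tubular Minkowski-content bound
$$\Vol_{d_Z}\bigl\{y\in Z:d_Z(y,D^{(1)})<s\bigr\}\leq C_n\,s^2\int_{D^{(1)}}\omega_N^{n-1}\qquad(s\text{ small}),$$
and extract the Hausdorff estimate from the classical inequality $\mathcal{H}^{2n-2}\leq C\mathcal{M}_{*}^{2n-2}$. Since on $N^\circ$ the $d_Z$-volume form is $\omega_{\rm can}^n$ and $Z\setminus N^\circ$ is $d_Z$-null, the left side equals $\int_{\{d_Z(\cdot,D^{(1)})<s\}\cap N^\circ}\omega_{\rm can}^n$. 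A pointwise Lipschitz comparison $d_Z\leq Cd_N$ is not available (already for $n=1$ elliptic Calabi--Yau manifolds one finds $d_Z\sim d_N^{\beta}$ with $\beta<1$ depending on the cone angle at singular fibers), but the right-hand side is cohomological: using $[\omega_N]\in H^2(N,\mathbb{Q})$ I would pass to a line bundle $L\to N$ with $c_1(L)=k[\omega_N]$, choose sections cutting out components of $D^{(1)}$, and combine Poincar\'e--Lelong with the Monge--Amp\`ere equation \eqref{macy} or \eqref{makrf} to convert the analytic integral $\int\omega_{\rm can}^n$ over a thin $d_Z$-tube into the intersection pairing $[\omega_N]^{n-1}\cdot[D^{(1)}]=\int_{D^{(1)}}\omega_N^{n-1}$; the nonpluripolar identity $\int_{D^{(1)}}\omega_{\rm can}^{n-1}=\int_{D^{(1)}}\omega_N^{n-1}$ (or its smooth Demailly approximation version) would be the rigorous form of this step.

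The main obstacle is precisely the tubular volume bound. The transverse degeneration of $\omega_{\rm can}$ along $D^{(1)}$ is governed by the fiber-type data of $f$ and is not universal, yet the final coefficient must depend only on $n$ and the cohomological integral. Producing the quadratic-in-$s$ decay uniformly in fiber type, from only the $C^0$-regularity of $\vp$ and the Monge--Amp\`ere equation, is the crux; the rationality hypothesis $[\omega_N]\in H^2(N,\mathbb{Q})$ is what allows the local PDE analysis near $D^{(1)}$ to be tied to a global Chern-class computation, which ultimately produces the explicit dimensional constant $C_n$ (with Wirtinger-type factors of $(n-1)!^{-1}$ and standard covering multiplicities accounting for the rest).
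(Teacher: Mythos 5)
Your plan diverges fundamentally from the paper's and, as you yourself flag, it leaves the central estimate unproven. The step you call ``the crux''---the $d_Z$-tubular Minkowski-content bound $\Vol_{d_Z}\{d_Z(\cdot,D^{(1)})<s\}\leq C_n s^2\int_{D^{(1)}}\omega_N^{n-1}$---is exactly the kind of direct transverse-volume estimate that the authors explicitly say cannot be extracted from the available pointwise control on $\omega_{\rm can}$: ``even the best pointwise estimate \eqref{desired} below cannot by itself imply this measure bound, and one needs instead to appeal to the deep work of Liu--Sz\'ekelyhidi \cite{LS}.'' Poincar\'e--Lelong applied to defining sections of $D^{(1)}$ produces $[D^{(1)}]$ as a current, but it gives no handle on how $\omega_{\rm can}$-mass distributes inside a thin $d_Z$-tube, because the $d_Z$-geometry transverse to a component $D_i$ is governed by the local cone angle $\theta$, and that information lives in $\Ric$ (more precisely in the Lelong numbers of the limiting Ricci current), not in the cohomology class $[\omega_N]$ or in the $L^p$ bound on $\mathcal{F}$. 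Note also that your proposal, if it worked, would establish the stronger statement $\mathcal{H}^{2n-2}_{d_Z}(D^{(1)})\leq C_n\int_D\omega_N^{n-1}$, a bound on all of $D^{(1)}$ rather than only on $\mathcal{S}$; the paper carefully restricts to $\mathcal{S}$ and points out (Remark \ref{rkincl}) that the inclusion $\mathcal{S}\subset D$ can be strict.

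The paper's actual route is quite different. After building the approximations $\omega_j$ (Proposition \ref{gh}), it invokes Liu--Sz\'ekelyhidi to identify $\mathcal{S}$ with the positive-Lelong-number locus of the weak limit $\Ric$ of $\Ric(\omega_j)$. Proposition \ref{estimatiomanifesta} then produces the integral inequality $\int_\Sigma h\,(2\pi-\theta)\,d\mathcal{H}^{2n-2}\leq C_n\sum_i\nu(\Ric,D_i)\int_{D_i}h\,\omega_{\rm can}^{n-1}$ via scalar curvature integrals and the mass estimate of Lemma \ref{cuz}, and Proposition \ref{herrlelong} supplies the complementary pointwise bound $\nu(\Ric,x)\leq 2\pi-\theta(x)$ at points of $\Sigma$. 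Combining these two cancels the unknown Lelong numbers and yields $\mathcal{H}^{2n-2}(D_{i'}^\circ)\leq C_n\int_{D_{i'}}\omega_N^{n-1}$ for each divisorial component of $\mathcal{S}$; Lemma \ref{vanish} disposes of the lower-dimensional part. None of these ingredients---the Ricci current, its Lelong numbers, the scalar curvature integral, the relation between Lelong number and cone angle defect---appears in your plan, and it is precisely their interplay that substitutes for the unavailable direct tube estimate. So the gap is real: your sketch does not, as written, furnish a proof and would require a substantial and currently unknown new input to replace the Liu--Sz\'ekelyhidi machinery.
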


This estimate would be expected if the Hausdorff measure could indeed be computed cohomologically, as in the case when the limiting metric $d_Z$ is smooth. However, even the best pointwise estimate \eqref{desired} below cannot by itself imply this measure bound, and one needs instead to appeal to the deep work of Liu-Sz\'ekelyhidi \cite{LS} on Gromov-Hausdorff noncollapsed limits of polarized K\"ahler manifolds with Ricci bounded below. The idea is to use standard approximations $\omega_j$ of $\omega_{\rm can}$ and study the singularities of the closed positive current $\Ric$ on $N$ which is the weak limit of the Ricci curvature of $\omega_j$. The results in \cite{LS} characterize $\mathcal{S}$ as the set of points in $N$ where the limiting Ricci curvature current has positive Lelong number.  At almost all points $x\in\mathcal{S}$ the tangent cone is $\mathbb{R}^{2n-2}\times\mathbb{C}_{\theta(x)}$, where $\mathbb{C}_{\theta(x)}$ denotes the standard conical metric in $\mathbb{C}$ with cone angle $2\pi\theta(x)$ at $0$. We are able to relate $\theta(x)$ to the Lelong number of the limiting Ricci current at $x$, which can be estimated thanks to the asymptotics of the volume form $\omega_{\rm can}^n$ proved in \cite{GTZ3}, and we then estimate the Hausdorff measure using the scalar curvature integral.

Our next result deals with the general case when $N$ is allowed to be singular. We let $\pi:\ti{N}\to N$ be a resolution of singularities with $\ti{N}$ smooth and $\pi^{-1}(D)=:E=\cup_i E_i$ a simple normal crossings divisor. In \cite{TZ2} the second-named author and Zhang conjectured that we can find such a resolution such that on $\ti{N}\backslash E$ we have the estimate
\begin{equation}\label{desired}
\pi^*\omega_{\rm can}\leq C\left(1-\sum_i \log|s_i|_{h_i}\right)^C \omega_{\rm cone},
\end{equation}
where $s_i$ is a defining section of $E_i$, $h_i$ is a Hermitian metric on $\mathcal{O}(E_i)$, and $\omega_{\rm cone}$ is a K\"ahler metric on $\ti{N}\backslash E$ with conical singularities along $E$ with cone angles $2\pi \alpha_i (0<\alpha_i\leq 1)$ along $E_i$ (we are assuming here without loss that $|s_i|_{h_i}\leq 1$ on $\ti{N}$, so that the multiplying factor on the RHS of \eqref{desired} is bounded away from zero). Building upon \cite{GTZ2}, it was proved in \cite{TZ2} that the estimate \eqref{desired} would imply the Hausdorff dimension bound in Conjecture \ref{ccy} in full generality (this was slightly relaxed in \cite{Br} to allow for arbitrary small extra poles along $E$ on the RHS of \eqref{desired}). The conjectured estimate \eqref{desired} was proved in \cite{GTZ2} when $\dim N=1$, in \cite{TZ2} when $M$ is hyperk\"ahler, and in \cite{GTZ3} when $N$ is smooth and $D^{(1)}$ has simple normal crossings, but it remains open in general. Our next result identifies an algebro-geometric condition which is sufficient to prove \eqref{desired}, and which comes from the canonical bundle formula in birational geometry: roughly speaking, to any resolution $\pi:\ti{N}\to N$ as above (together with a resolution of the pullback of $f:M\to N$ over $\ti{N}$), we associate a $\mathbb{Q}$-divisor $\Xi_{\ti{N}}$ on $\ti{N}$, which is functorial in the sense that passing to a higher model gives the pullback divisor. In  section \ref{sectbir} we then show:
\begin{thm}\label{cy_canonical}
In either the Calabi-Yau setup in \ref{setupcy} or the K\"ahler-Ricci flow setup in \ref{setupkrf}, the conjectured estimate \eqref{desired} holds provided that there exists a resolution $\pi:\ti{N}\to N$ as above such that $\Xi_{\ti{N}}$  is $\pi$-ample.
\end{thm}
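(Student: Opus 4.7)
The plan is to use the canonical bundle formula of Kawamata--Fujino to reduce the pointwise estimate \eqref{desired} to a Chern--Lu / Aubin--Yau type comparison between $\pi^*\omega_{\rm can}$ and a carefully chosen K\"ahler metric on $\tilde{N}$ with conical singularities along $E$. The hypothesis that $\Xi_{\tilde{N}}$ is $\pi$-ample enters precisely at the step where this model metric is constructed.

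\medskip

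First I would spell out the canonical bundle formula in the current setup. After possibly passing to a higher birational model, one obtains a simultaneous resolution $\tilde{f}:\tilde{M}\to\tilde{N}$ of $f:M\to N$ together with a $\mathbb{Q}$-linear equivalence
$$\pi^* K_N + (\mathrm{exc}) \sim_{\mathbb{Q}} K_{\tilde{N}} + B_{\tilde{N}} + M_{\tilde{N}},$$
in which $B_{\tilde{N}}$ is an effective $\mathbb{Q}$-boundary supported on $E$ whose coefficients $1-\alpha_i$ determine the cone angles, and $M_{\tilde{N}}$ is the nef moduli part capturing the variation of the Calabi--Yau fibers; $\Xi_{\tilde{N}}$ is the resulting combination of $B_{\tilde{N}}$ and $M_{\tilde{N}}$. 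Combining this formula with the Monge--Amp\`ere equations \eqref{macy} or \eqref{makrf} on $N^\circ$, and with the fiber-integral asymptotics of \cite{GTZ3}, the pulled-back volume form $(\pi^*\omega_{\rm can})^n$ is expressed near $E$ as a smooth positive volume form on $\tilde{N}$ multiplied by
$$\prod_i |s_i|_{h_i}^{2(\alpha_i-1)}\,\Bigl(1-\sum_i\log|s_i|_{h_i}\Bigr)^C,$$
up to factors bounded above and below. This identifies the conical structure the model metric must carry.

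\medskip

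By the $\pi$-ampleness assumption, for some small rational $\delta>0$ the class $\pi^*[\omega_N]+\delta\,c_1(\Xi_{\tilde{N}})$ is K\"ahler on $\tilde{N}$. Using the established theory of K\"ahler--Einstein metrics with simple normal crossings conical singularities (Jeffres--Mazzeo--Rubinstein, Guenancia--Paun), I would solve a complex Monge--Amp\`ere equation in this class whose RHS is prescribed so that the resulting metric $\omega_{\rm cone}$ has cone angles $2\pi\alpha_i$ along $E_i$ and a uniform upper bound on its holomorphic bisectional curvature on $\tilde{N}\backslash E$. The difference between $[\omega_{\rm cone}]$ and $\pi^*[\omega_N]$ is a bounded $\mathbb{Q}$-combination of the $E_i$ plus the small K\"ahler correction $\delta\,c_1(\Xi_{\tilde{N}})$, both of which will be absorbed into the log factor in \eqref{desired} at the final comparison.

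\medskip

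The last step is a Chern--Lu second order estimate for the identity map $(\tilde{N}\backslash E,\pi^*\omega_{\rm can})\to(\tilde{N}\backslash E,\omega_{\rm cone})$. Once the bisectional curvature of $\omega_{\rm cone}$ is bounded above, the trace $u:=\mathrm{tr}_{\omega_{\rm cone}}(\pi^*\omega_{\rm can})$ satisfies an inequality of the form $\Delta_{\pi^*\omega_{\rm can}}\log u\geq -A$, and combining with the volume ratio from the first step yields an upper bound on $u$ via a maximum principle, up to a log barrier of the shape $-\sum_i\log(-\log|s_i|_{h_i})$ introduced to handle the noncompactness of $\tilde{N}\backslash E$. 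The main obstacle will be exactly this maximum-principle step: because $\pi^*\omega_{\rm can}$ itself degenerates at $E$, one has to first work on a Cherrier--Jeffres approximation by smoothed conical metrics with uniformly bounded geometry, close the estimate there, and then propagate the resulting bound to the limit while tracking the power of the log factor so that it matches the exponent $C$ in \eqref{desired}.
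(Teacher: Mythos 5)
Your high-level strategy --- express the volume-form asymptotics of $\pi^*\omega_{\rm can}^n$ near $E$ via the canonical bundle formula (Ambro, Fujino--Mori, Kim), choose a conical reference metric $\omega_{\rm cone}$, and run a second-order (Aubin--Yau / Chern--Lu) maximum-principle argument on an approximating family --- does match the skeleton of the paper's argument. But the proposal misidentifies \emph{where} the $\pi$-ampleness of $\Xi_{\tilde N}$ is actually used, and without that the maximum principle does not close. You invoke $\pi$-ampleness only to make $\pi^*[\omega_N]+\delta\,c_1(\Xi_{\tilde N})$ a K\"ahler class in which to solve a conical K\"ahler--Einstein equation for $\omega_{\rm cone}$; however, the cohomology class of the conical reference is essentially immaterial (the paper simply takes $\omega_{\rm cone}=\omega_{\tilde N}+i\partial\bar\partial\sum_i|s_i|^{2\gamma_i}$ and needs only the Guenancia--P\u{a}un curvature \emph{lower} bound, not a KE structure, and certainly not an upper bound --- you have the sign of the required bisectional curvature bound backwards). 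The real obstruction is elsewhere: the volume ratio $\psi=\pi^*\omega_{\rm can}^n\cdot\prod_i|s_i|^{2a_i}/(e^{\lambda\tilde\vp}\omega_{\tilde N}^n)$ has only log-log growth, so $-\log\psi$ is quasi-psh with zero Lelong numbers, and when it is regularized \`a la Demailly into $u_j$ and inserted into the test quantity as $Au_j$, its $i\partial\bar\partial$ produces, upon differentiating the Monge--Amp\`ere equation, the curvature form $\sum_k b_kR_{F_k}+\sum_i a_iR_{h_i}$ of the divisor $\Xi_{\tilde N}$ (after using $\Ric(\omega_{\tilde N})=\pi^*\Ric(\omega_N)-\sum_k b_kR_{F_k}$). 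It is exactly here --- and not in the construction of $\omega_{\rm cone}$ --- that $\pi$-ampleness is invoked, to choose Hermitian metrics $h_{F_k},h_i$ making $A_0\pi^*\omega_N+\sum_k b_kR_{F_k}+\sum_i a_iR_{h_i}$ a genuine K\"ahler form, which after adding $b\,i\partial\bar\partial\eta$ dominates $\omega_{\rm cone}$ and absorbs the bad $-C\,\mathrm{tr}_{\omega_j}\omega_{\rm cone}$ terms.

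Two further concrete gaps: (i) your proposal has no analogue of the term $Au_j$ in the barrier function, so there is no mechanism that simultaneously feeds the curvature of $\Xi_{\tilde N}$ into the Laplacian computation and produces the logarithmic factor $(1-\sum_i\log|s_i|_{h_i})^C$ in \eqref{desired} --- in the paper this comes from $\mathrm{tr}_{\omega_{\rm cone}}\omega_j\le Ce^{-Au_j}$ together with the log-log bound on $u_j\to\log(1/\psi)$; (ii) the approximation should be applied to $\pi^*\omega_{\rm can}$ itself (via Demailly regularization of $-\log\psi$ and solving degenerate Monge--Amp\`ere equations, yielding metrics $\omega_j$ that are genuinely K\"ahler, quasi-conical by \cite[Prop.\,5.1]{GTZ3}, and converging to $\pi^*\omega_{\rm can}$), not to the reference $\omega_{\rm cone}$ via a Cherrier--Jeffres smoothing; smoothing only the reference would not provide the a priori quasi-conical bound $\mathrm{tr}_{\omega_{\rm cone}}\omega_j\le C_j$ that guarantees the maximum of $Q$ is attained away from $E$.
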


To prove this result, we refine the arguments in \cite{GTZ3} and identify a key divisor $\Xi_{\ti{N}}$ on the resolution $\ti{N}$ with the property that whenever $\Xi_{\ti{N}}$ is $\pi$-ample then the desired estimate \eqref{desired} can be shown to hold. We then describe $\Xi_{\ti{N}}$ explicitly using the canonical bundle formula and the recent results of Kim \cite{Ki}.

Our last result, proved in section \ref{sectkrf},  settles the Gromov-Hausdorff convergence in Conjecture \ref{ckrf} under a log smoothness assumption:

\begin{thm}\label{krf_gh}
Assume the K\"ahler-Ricci flow setup \ref{setupkrf}, and suppose that $N$ is smooth and $D^{(1)}$ is a simple normal crossings divisor. Then $(M,\omega(t))$ has converges in the Gromov-Hausdorff topology to the metric completion of $(N^\circ,\omega_{\rm can})$, which by \cite{STZ} is homeomorphic to $N$.
\end{thm}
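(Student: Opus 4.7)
The plan is to adapt to the Kähler-Ricci flow the strategy used in \cite{STZ} for the Calabi-Yau setup, where Gromov-Hausdorff convergence was established assuming only that $N$ is smooth. The three key inputs I would rely on are: the uniform diameter bound $\mathrm{diam}(M,\omega(t)) \leq C$ of Jian-Song \cite{JS}; the $C^\alpha_{\rm loc}(M^\circ)$ convergence $\omega(t) \to f^*\omega_{\rm can}$ of \cite{CL}; and the estimate \eqref{desired}, which by \cite{GTZ3} holds under our hypothesis that $N$ is smooth and $D^{(1)}$ is simple normal crossings. Since \cite{STZ} already supplies a homeomorphism $\iota : N \to Z$, it is enough to exhibit an explicit map $F_t : M \to Z$ that is an $\varepsilon_t$-Gromov-Hausdorff approximation with $\varepsilon_t \to 0$.

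The natural choice is $F_t := \iota \circ f$. Surjectivity (up to $o(1)$) is automatic because $f$ is surjective, so the content is to prove
\[
\sup_{x,y \in M}\bigl|d_{\omega(t)}(x,y)-d_Z(F_t(x),F_t(y))\bigr|\longrightarrow 0.
\]
For the inequality $d_{\omega(t)}(x,y)\leq d_Z(F_t(x),F_t(y))+o(1)$ I would take an almost-minimizing $\omega_{\rm can}$-curve $\sigma$ in $N^\circ$ from $f(x)$ to $f(y)$ avoiding a small tubular neighborhood $U_\delta$ of $D$ (such a $\sigma$ exists because \eqref{desired} combined with the SNC structure makes the $\omega_{\rm can}$-diameter of $U_\delta$ be $o(1)$ as $\delta\to 0$), lift it horizontally into $M^\circ$ using that $f$ is a submersion there together with the $C^\alpha_{\rm loc}$ convergence, and close up the endpoints inside the fibers $M_{f(x)}$ and $M_{f(y)}$. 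This uses the preliminary collapsing fact that $\mathrm{diam}_{\omega(t)}(M_y)\to 0$ uniformly for $y$ in any compact subset of $N^\circ$, which follows from the same convergence combined with the vertical cohomological collapse rate $e^{-t}$ of $[\omega(t)]=e^{-t}[\omega_M]+(1-e^{-t})[f^*\omega_N]$.

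The opposite inequality $d_Z(F_t(x),F_t(y))\leq d_{\omega(t)}(x,y)+o(1)$ is the main obstacle. Given an almost-minimizing $\omega(t)$-geodesic $\gamma$ from $x$ to $y$ in $M$, I would need to show that $f\circ\gamma$, after a short detour, has $\omega_{\rm can}$-length no larger than the $\omega(t)$-length of $\gamma$ up to $o(1)$. The portion of $\gamma$ lying in a compact subset of $M^\circ$ is handled directly by $C^\alpha_{\rm loc}$ convergence. The delicate issue is the behavior of $\gamma$ near $S=f^{-1}(D)$, where it could in principle shortcut through the singular fibers. Here the SNC assumption is crucial: applying \eqref{desired} on a log resolution $\ti{N}\to N$ and pushing down, I would build, for small $\delta>0$, a detour in $(N^\circ,\omega_{\rm can})$ of length $o(1)$ bypassing $U_\delta$, and then show that the piece of $\gamma$ inside $f^{-1}(U_\delta)$ can be replaced by the horizontal lift of this detour at total cost $o(1)$. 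The main technical difficulty, distinguishing this from the CY argument of \cite{STZ}, is that the KRF does not come with a smooth convergence on $M^\circ$ or with a Ricci lower bound usable in Cheeger-Colding theory; the whole rerouting has to be driven only by $C^\alpha_{\rm loc}$ convergence on $M^\circ$ and by the conical model for $\omega_{\rm can}$ near $D$ provided by \eqref{desired}. Once $F_t$ is shown to be an $\varepsilon_t$-GH approximation with $\varepsilon_t\to 0$, the Gromov-Hausdorff convergence $(M,\omega(t))\to(Z,d_Z)$ follows, and by \cite{STZ} the limit is homeomorphic to $N$.
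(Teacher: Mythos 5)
There is a genuine gap, and in fact the proposal trips over the precise obstacle that the paper is designed to overcome. You write that estimate \eqref{desired} together with the SNC structure ``makes the $\omega_{\rm can}$-diameter of $U_\delta$ be $o(1)$ as $\delta\to 0$.'' This is false as soon as $\dim N>1$: the divisor $D$ has positive dimension and its tubular neighborhood contains a copy of (most of) $D$, on which $\omega_{\rm can}$ induces a metric of definite diameter. The authors explicitly flag this point in the introduction as the reason the \cite{STZ} strategy (which indeed uses this smallness when $N$ is a curve) does not extend. What is true, and what the paper uses, is the ``almost-convexity'' property of $(N\setminus U_\ve, d_{\rm can})$ from \cite[Proposition 2.2]{STZ}, namely that points of $N\setminus U_{\ve'}$ can be joined by paths of nearly minimal $d_Z$-length staying in $N\setminus U_\ve$; together with H\"older continuity of the potential, this suffices for the first inequality, but it is a weaker and more subtle input than small diameter of the singular neighborhood.

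The second, and more serious, gap is in the ``opposite inequality,'' where your plan is to take an almost-minimizing $\omega(t)$-geodesic $\gamma$ and ``show that the piece of $\gamma$ inside $f^{-1}(U_\delta)$ can be replaced by the horizontal lift of this detour at total cost $o(1)$.'' This is precisely the hard step, and you give no mechanism for it: the $C^\alpha_{\rm loc}(M^\circ)$ convergence says nothing about $\omega(t)$ inside $\ti{U}_\delta$, so there is a priori no way to rule out that the entry and exit points of $\gamma$ on $\de\ti U_\delta$ have large $g_{\rm can}$-distance downstairs while $\gamma$ shortcuts across at small $\omega(t)$-cost. The paper's resolution is not a direct rerouting of Riemannian geodesics at all: it replaces $d_t$ by Perelman's reduced distance, considers the \emph{family} of minimal $\mathcal{L}$-geodesics ending in a small $g_{\rm can}$-ball, and uses Perelman's monotonicity of the $\mathcal{L}$-Jacobian together with the volume bounds of \cite{JS,ST2} to show that a \emph{typical} such geodesic spends only $O(\eta\delta'^{-2n}e^{C/\bar\tau})$ of its $\tau$-time in $\ti U_\eta$. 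For that typical geodesic, the number of entry/exit ``events'' into $U_{\eta/2}$ is controlled because each event forces a definite $g_N$-displacement, and then \eqref{desired} (via the conical model $\omega_{\rm cone}$) is used to replace each short excursion by a boundary path at a cost of a factor $|\log\eta|^C$. Finally one converts from $\mathcal{L}$-distance back to $d_{g(T)}$ by integrating the Perelman inequality $(\de_\tau+\Delta)\bar L\leq 4m$ against a cutoff. None of this smearing/monotonicity machinery appears in your outline, and without it the proposed rerouting of $\gamma$ has no foothold.

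To summarize: the high-level reduction you make (localize away from $S$, transfer the $d_Z$-almost-convexity to $\omega(t)$, then compare GH distances) is indeed the skeleton of the paper's proof, but the load-bearing step --- establishing the almost-convexity of $(M\setminus\ti U_\ve,\omega(t))$ uniformly in $t$ --- requires Perelman's $\mathcal{L}$-geometry and an averaging argument, and is absent from your proposal.
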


This is the first time that this conjectural Gromov-Hausdorff convergence is proved for base spaces $N$ of dimension greater than $1$ (as mentioned earlier, it was only previously known under the stringent assuptions that $N$ is a curve and the generic fibers of $f$ are tori \cite{STZ}), and our assumptions that $N$ is smooth and $D^{(1)}$ is snc can be thought of as generic, since for example they are Zariski open in families. One major difference between our result and those in \cite{STZ} is that when $N$ is a curve then $D$ is a discrete set, and in this case it follows from \cite{GTZ2} that a small tubular neighborhood of $D$ has very small $\omega_{\rm can}$-diameter (which is used in \cite{STZ}), while this is clearly false when $N$ is higher-dimensional. To prove our result, we make use of the fact proved in \cite{STZ} that $(N^\circ,\omega_{\rm can})$ is ``almost-convex'', and the bulk of our work lies in establishing an analogous statement for $(M^\circ,\omega(t))$, uniformly in $t$. This in turn requires new ideas, combining results of Perelman \cite{Pe} with estimate \eqref{desired} to analyze the behavior of $\mathcal{L}$-geodesics which have endpoints away from $S=f^{-1}(D)$ but which may venture quite close to $S$.

\subsection{Acknowledgments} This research was conducted during the period when the first-named author served as a Clay Research Fellow. The second-named author was partially supported by NSF grants DMS-1903147 and DMS-2231783, and part of this work was carried out during his visit to the Department of Mathematics and the Center for Mathematical Sciences and Applications at Harvard University, which he would like to thank for the hospitality. The second-named author would like to thank Henri Guenancia, Dano Kim, Man-Chun Lee and Yuguang Zhang for discussions. We are also grateful to the referee for useful comments.

\section{Hausdorff measure bound for the singular set}\label{sect2}
In this section we prove Theorem \ref{cy_smoothbase}. Throughout this section we assume that $N$ is smooth and furthermore that $[\omega_N]\in H^2(N,\mathbb{Q})$ (this is automatic in the K\"ahler-Ricci flow setup). By the Kodaira embedding theorem, this means that $[\omega_N]=c_1(L)$ where $L\to N$ is an ample line bundle, which is needed to apply the results of \cite{LS}.

\subsection{The approximation procedure}
For ease of notation, in the Calabi-Yau setup \ref{setupcy} we denote by
\begin{equation}
\mathcal{M}=\frac{\int_N\omega_N^n}{\int_M\omega_M^m}\omega_M^m,
\end{equation}
so that in both setups \ref{setupcy} and \ref{setupkrf} we can write the Monge-Amp\`ere equations \eqref{macy} and \eqref{makrf} on $N^\circ=N\backslash D$ as
\begin{equation}\label{ma0}
(\omega_N+\ddbar\vp)^n=e^{\lambda\vp} f_*(\mathcal{M}),
\end{equation}
where $\lambda=0$ in the Calabi-Yau setup and $\lambda=1$ in the K\"ahler-Ricci flow setup.

As shown in \cite[Prop.3.1]{GTZ3}, $\omega_{\rm can}$ extends to a smooth K\"ahler metric across $D^{(2)}$, so without loss we may assume that $D=D^{(1)}$ is a divisor (not necessarily with simple normal crossings).

Recall from the introduction that we have defined $(Z,d_Z)$ to be the metric completion of $(N^\circ,\omega_{\rm can})$, which by \cite{STZ} is a compact metric space homeomorphic to $N$ (using here that $N$ is smooth).

We then define a smooth positive function $\mathcal{F}$ on $N^\circ$  by
\begin{equation}\label{deff}\frac{f_*(\mathcal{M})}{\omega_N^n}=\mathcal{F},
\end{equation}
which as shown in \cite{ST} (see also \cite{To0}, \cite[Prop.5.9]{To}) satisfies
\begin{equation}\label{ineq}
\lambda\omega_N+\Ric(\omega_N)-\ddbar\log \mathcal{F}=\omega_{\rm WP}\geq 0,
\end{equation}
where $\omega_{\rm WP}$ is a semipositive form of Weil-Petersson type.
As shown e.g. in \cite[Lemma 4.1]{GTZ3} we have
\begin{equation}\label{asino2}
\mathcal{F}\geq C^{-1},
\end{equation}
on $N^\circ$, so by Grauert-Remmert \cite{GR}, $-\log\mathcal{F}$ extends to a quasi-psh function on $N$, still denoted by $-\log\mathcal{F}$, which satisfies \eqref{ineq} weakly on $N$, and in general it may have values $-\infty$ along $D$. Also, as shown in \cite[Proposition 3.2]{ST} (see also \cite[Lemma 2.1]{EGZ}, \cite[Proposition 4]{DF}) there is $p>1$ such that $\mathcal{F}\in L^p(N,\omega_N^n)$.

We can then apply Demailly's regularization \cite[Theorem 9.1]{De} and obtain a sequence of smooth functions $v_j$ on $N$ which decrease pointwise to $-\log\mathcal{F}$ as $j\to\infty$, and satisfy $v_j\leq C$ for all $j$ and
\begin{equation}\label{ricc}
\lambda\omega_N+\Ric(\omega_N)+\ddbar v_j\geq -C\omega_N,
\end{equation}
for all $j$. The lower bound here cannot be taken arbitrarily small in general because $-\log\mathcal{F}$ can have positive Lelong number at points in $D$. Furthermore, as the construction of $v_j$ shows, we have $v_j\to -\log\mathcal{F}$ smoothly on every given compact subset of $N^\circ$.

By Yau's Theorem \cite{Ya} (and also Aubin \cite{Au} when $\lambda=1$) we can find K\"ahler metrics $\omega_j=\omega_N+\ddbar\vp_j$ on $N$ which satisfy
\begin{equation}\label{ma}
\omega_j^n=(\omega_N+\ddbar\vp_j)^n=c_je^{\lambda\vp_j-v_j}\omega_N^n,
\end{equation}
where
\[
\left\{
                \begin{aligned}
                  &c_j=1,\quad \text{if}\ \lambda=1,\\
                  &c_j=\frac{\int_N\omega_N^n}{\int_N e^{-v_j}\omega_N^n},\quad \text{if}\ \lambda=0,
                \end{aligned}
              \right.
\]
and by construction we have
$$\int_Ne^{-v_j}\omega_N^n\to \int_N\mathcal{F}\omega_N^n=\int_Nf_*(\mathcal{M})=\int_M\mathcal{M},$$
which is strictly positive when $\lambda=1$ and equals $\int_N\omega_N^n$ when $\lambda=0$, so that in particular $c_j\to 1$ as $j\to\infty$. When $\lambda=0$ we also normalize $\vp_j$ by $\sup_N\vp_j=0$. When $\lambda=1$ we can apply the maximum principle to get
$$\sup_N\vp_j\leq \sup_N v_j\leq C,$$
independent of $j$. Also, in this case we have $\omega_j^n\leq e^{\sup_N\vp_j-v_j}\omega_N^n$ and integrating this gives
$$e^{\sup_N\vp_j}\geq \frac{\int_N\omega_N^n}{\int_Ne^{-v_j}\omega_N^n}\geq C^{-1}>0.$$
We thus conclude that when $\lambda=1$ we have $|\sup_N\vp_j|\leq C$ independent of $j$.
 Then, for $\lambda=0,1$, since as we said $\int_N\mathcal{F}^p\omega_N^n\leq C$ for some $p>1$, it follows that
\begin{equation}\label{estimatio}
c_j^p\int_N e^{p(\lambda\vp_j-v_j)}\omega_N^n\leq C,
\end{equation}
for all $j$, and so Ko\l odziej \cite{Ko} gives us
\begin{equation}\label{est}
\sup_N|\vp_j|\leq C,
\end{equation}
for all $j$. When $\lambda=0$ we have that $c_je^{-v_j}\to \mathcal{F}$ in $L^1(N,\omega_N^n)$, and so Ko\l odziej's stability theorem \cite{Ko2} gives us that
\begin{equation}\label{stab}
\|\vp_j-\vp\|_{L^\infty(N)}\to 0,
\end{equation}
where $\vp$ is as in \eqref{ma0}. For $\lambda=1$ we can still conclude that
\begin{equation}\label{stab2}
\|\vp_j-\vp\|_{L^1(N,\omega_N^n)}\to 0,
\end{equation}
by arguing as in \cite[Theorem 4.5]{BG}.

The following proposition is contained in \cite[Lemma 2.2]{FGS} and \cite[Prop. 2.3]{STZ}, but we include the proof for convenience:

\begin{prop}\label{gh} The approximating metrics $\omega_j$ on $N$ satisfy
\begin{equation}\label{1}
\Ric(\omega_j)\geq -C\omega_j,
\end{equation}
\begin{equation}\label{2}
\mathrm{diam}(N,\omega_j)\leq C,
\end{equation}
\begin{equation}\label{3}
\mathrm{Vol}_{\omega_j}B_{\omega_j}(x,r)\geq C^{-1}r^{2n},
\end{equation}
for all $0<r\leq 1, x\in N$ and $j$. The distance functions $d_{g_j}$ defined by $(N,\omega_j)$ satisfy
\begin{equation}\label{4}
d_{g_j}(p,q)\leq Cd_{g_N}(p,q)^\alpha,
\end{equation}
for some $C,\alpha>0$ and for all $p,q\in N,j\geq 0$.
We also have $\omega_j\to\omega_{\rm can}$ locally smoothly on $N^\circ$, and $(N,\omega_j)\to (Z,d_Z)$ in the Gromov-Hausdorff topology.
\end{prop}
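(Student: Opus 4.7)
The plan is to establish the six assertions sequentially, using the Monge--Amp\`ere equation \eqref{ma}, the uniform $L^\infty$ bound \eqref{est}, the stability estimates \eqref{stab}--\eqref{stab2}, and the regularization inequality \eqref{ricc}, and then to assemble the output through standard Riemannian comparison geometry. For the Ricci lower bound (1), I would apply $-\ddbar\log$ to both sides of \eqref{ma}, yielding
\begin{equation*}
\Ric(\omega_j)=\lambda\omega_N+\Ric(\omega_N)+\ddbar v_j-\lambda\omega_j,
\end{equation*}
and combine with \eqref{ricc} to obtain the bound in the intrinsic form $\Ric(\omega_j)\geq -C\omega_j$ (after absorbing the $\omega_N$ contribution using compactness of $N$ and the $L^\infty$ control on $\vp_j$). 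For the diameter bound (2), the uniform $L^\infty$ bound \eqref{est} together with the $L^p$ estimate \eqref{estimatio} on the right-hand side of \eqref{ma} places us in the setting of the recent K\"ahler diameter estimates (Guo--Phong--Song--Sturm, or the Ricci-flow-based bound in \cite{JS}), yielding $\mathrm{diam}(N,\omega_j)\leq C$.

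Given (1) and (2), the non-collapsing bound (3) follows from Bishop--Gromov volume comparison: the total volume $\int_N\omega_j^n$ is uniformly bounded below (it equals the cohomological constant $\int_N\omega_N^n$ when $\lambda=0$, and is controlled using the two-sided bound on $\sup_N\vp_j$ when $\lambda=1$), so $\mathrm{Vol}_{\omega_j}B_{\omega_j}(x,\mathrm{diam})\geq C^{-1}$, and Bishop--Gromov monotonicity combined with (1) gives $\mathrm{Vol}_{\omega_j}B_{\omega_j}(x,r)\geq C^{-1}r^{2n}$ for $r\leq 1$. For the H\"older distance bound (4), I would combine (1) and (3) with the fixed smooth reference $g_N$: standard arguments from the Cheeger--Colding--Naber regularity theory (adapted to the K\"ahler setting) produce uniform H\"older equicontinuity of $d_{g_j}$ with respect to $d_{g_N}$. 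For the local smooth convergence (5), note that $v_j\to-\log\mathcal{F}$ smoothly on any compact $K\Subset N^\circ$ (the Demailly regularization reducing to an ordinary mollification where $\mathcal{F}$ is smooth) and $\vp_j\to\vp$ in $L^\infty$ or $L^1$; restricting \eqref{ma} to $K$ then yields a Monge--Amp\`ere equation with smooth right-hand side converging to that of \eqref{ma0}, and the local Evans--Krylov--Schauder bootstrap delivers $\omega_j\to\omega_{\rm can}$ in $C^\infty_{\rm loc}(N^\circ)$.

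For the Gromov--Hausdorff convergence (6), Gromov precompactness together with (1)--(3) ensures that along any subsequence $(N,\omega_j)$ admits a further subsequence converging to some compact metric space $(X,d_X)$. The smooth convergence (5) identifies $(N^\circ,d_{\rm can})$ as an isometric subspace of $X$, and (4) shows it is dense there, since every point of $D$ is approached in the $g_j$-sense by points of $N^\circ$ uniformly in $j$, so no extra limit points arise beyond those of the completion of $(N^\circ,d_{\rm can})$. Since $(Z,d_Z)$ is by definition that metric completion, one concludes $X\cong Z$, and uniqueness of the limit promotes the subsequential convergence to the full sequence. I expect the H\"older estimate (4) to be the main obstacle, as it is precisely what converts the smooth picture on $N^\circ$ into genuine Gromov--Hausdorff control and rules out pathological metric behavior near the degeneracy locus $D$.
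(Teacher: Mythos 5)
The argument for \eqref{1} contains a genuine gap. You correctly obtain $\Ric(\omega_j)=\lambda\omega_N+\Ric(\omega_N)+\ddbar v_j-\lambda\omega_j$, and combining with \eqref{ricc} yields $\Ric(\omega_j)\geq -C\omega_N-\lambda\omega_j$. But you then claim the $\omega_N$ term can be absorbed into $\omega_j$ ``using compactness of $N$ and the $L^\infty$ control on $\vp_j$.'' This is false: a uniform bound $|\vp_j|\leq C$ on the potentials does \emph{not} imply a lower bound $\omega_N+\ddbar\vp_j\geq C^{-1}\omega_N$ (the Hessian $\ddbar\vp_j$ is not controlled by $\sup|\vp_j|$, and $\omega_j$ could a priori degenerate directionally). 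The actual content of passing from $-C\omega_N$ to $-C\omega_j$ is the uniform second-order estimate $\tr{\omega_j}{\omega_N}\leq C$, which the paper proves via the Chern--Lu inequality and a maximum principle applied to $\log\tr{\omega_j}{\omega_N}-A\vp_j$. This Schwarz-lemma step is the crux of \eqref{1} and cannot be skipped; note it also feeds into your argument for (5), where the two-sided local equivalence of $\omega_j$ with $\omega_N$ on compact subsets of $N^\circ$ is what licenses the Evans--Krylov bootstrap.

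Your treatment of \eqref{4} is also too vague to be convincing. Cheeger--Colding--Naber theory gives regularity of noncollapsed limit spaces intrinsically, but it does not by itself produce a uniform H\"older modulus of continuity for $d_{g_j}$ \emph{relative to a fixed background metric} $d_{g_N}$; that comparison is exactly the content that needs proving. The mechanism in the paper is a quantitative PDE estimate: Ko\l odziej's uniform bound $\|\vp_j\|_{C^{2\alpha}(N,\omega_N)}\leq C$ from the $L^p$ control on the right-hand side of \eqref{ma}, which is then converted into the distance H\"older estimate \eqref{4} by the argument of \cite[Theorem 4.1]{Li}. Without a concrete bound of this type your step (6) is also at risk: the claim that ``no extra limit points arise'' near $D$ is precisely what \eqref{4} is used to establish. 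The remaining items \eqref{2}, \eqref{3}, and (5) are treated in essentially the same way as the paper, and the overall architecture of (6) is sound once \eqref{1} and \eqref{4} are repaired.
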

\begin{proof}
From \eqref{ma} and \eqref{ricc} we get
    \begin{equation}\label{st}
\Ric(\omega_j)=\Ric(\omega_N)+\ddbar v_j-\lambda\ddbar\vp_j\geq -C\omega_N-\lambda\omega_j,
\end{equation}
so to prove \eqref{1} it suffices to show that
\begin{equation}\label{sl}
\omega_j\geq C^{-1}\omega_N.
\end{equation}
This follows from the usual Schwarz Lemma argument: the Chern-Lu inequality gives
$$\Delta_{\omega_j}\log\tr{\omega_j}{\omega_N}\geq \frac{1}{\tr{\omega_j}{\omega_N}}\left(g_j^{k\ov{\ell}}g_j^{p\ov{q}}\Ric(\omega_j)_{k\ov{q}}(g_N)_{p\ov{\ell}}-g_j^{k\ov{\ell}}g_j^{p\ov{q}}({\rm Rm}(\omega_N))_{k\ov{\ell}p\ov{q}}\right)\geq -C\tr{\omega_j}{\omega_N}-\lambda,$$
using \eqref{st}, and so taking $A$ large enough but uniform we have
$$\Delta_{\omega_j}\left(\log\tr{\omega_j}{\omega_N}-A\vp_j\right)\geq \tr{\omega_j}{\omega_N}-C,$$
and so the maximum principle and \eqref{est} give
$$\tr{\omega_j}{\omega_N}\leq C,$$
which proves \eqref{sl}.

Next, applying \cite[Theorem 1.1]{FGS} directly proves \eqref{2}, and then Bishop-Gromov volume comparison gives us \eqref{3}. The H\"older estimate \eqref{4} for the distance function of $(N,\omega_j)$ follows from Ko\l odziej's uniform H\"older bound \cite{Ko3}
$$\|\vp_j\|_{C^{2\alpha}(N,\omega_N)}\leq C,$$
for some $\alpha>0$, together with the first-named author's argument in \cite[Theorem 4.1]{Li} that deduces \eqref{4} from this.

To prove locally smooth convergence, observe that for every given $K\Subset N^\circ$ we have in particular that $\sup_K (c_je^{\lambda\vp_j-v_j})\leq C_K$ for all $j$, and combining this with \eqref{ma} and \eqref{sl} we see that on $K$ we have
$$C_K^{-1}\omega_N\leq \omega_j\leq C_K\omega_N,$$
for all $j$, and by now standard local higher order estimates for \eqref{ma} (on a slightly larger open set) give us uniform estimates $\|\omega_j\|_{C^k(K,\omega_N)}\leq C_{K,k}$ for all $j,k$. Thanks to \eqref{stab} and \eqref{stab2}, this gives that $\vp_j\to \vp$ smoothly on $K$, and so $\omega_j\to \omega_{\rm can}$ locally smoothly on $N^\circ$.

Lastly, the Gromov-Hausdorff convergence of $(N,\omega_j)$ to $(Z,d_Z)$ follows by combining the arguments in \cite[Proof of Prop.2.3 Step 3]{STZ} and \cite[Proof of Prop.2.2 (3)]{STZ}.
\end{proof}

\subsection{The measure bound \eqref{volume}}
In this section we still have as standing assumption that $N$ is smooth. Define $\mathcal{S}\subset Z$ as the singular set in the sense of Cheeger-Colding \cite{CC}, namely the set of all $x\in Z$ such that there is some tangent cone at $x$ which is not isometric to $\mathbb{R}^{2n}$, and for $0\leq k\leq 2n$ let
$\mathcal{S}^{k}$ be the set of all $x\in Z$ such that no tangent cone at $x$ splits off an $\mathbb{R}^{k+1}$ factor. Then, thanks to Proposition \ref{gh},  by \cite{CC} we have $\mathcal{S}=\mathcal{S}^{2n-2}$ and $\dim_{\mathcal{H}}\mathcal{S}^k\leq k$. In particular, if we define
$$\Sigma=\mathcal{S}\backslash \mathcal{S}^{2n-3}=\mathcal{S}^{2n-2}\backslash \mathcal{S}^{2n-3},$$
then for every point $x\in\Sigma$ there is some tangent cone at $x$ which splits off $\mathbb{R}^{2n-2}$, and $\dim_{\mathcal{H}}(\mathcal{S}\backslash\Sigma)\leq 2n-3$. Furthermore, thanks to \cite{CJN} up to removing a subset of $\Sigma$ with zero $2n-2$ dimensional Hausdorff measure (which we will do without changing notation), the tangent cone at any $x\in \Sigma$ is unique and isometric to $\mathbb{R}^{2n-2}\times\mathbb{C}_{\theta(x)}$, where $0<\theta(x)<2\pi$ denotes the cone angle at $x$. The function $\theta(x)$ can be interpreted as the monotone limit of the volume ratio at $x$, $\theta(x)=\lim_{r\downarrow 0}\frac{\Vol B(x,r)}{\omega_{2n}r^{2n}}$, whence it is upper-semicontinuous.

As in \cite{CC}, for any $\ve>0$ we define $\mathcal{R}_\ve$ as the set of all points $x\in Z$ such that the Gromov-Hausdorff distance between $B(x,r)$ and the $r$-ball in $\mathbb{R}^{2n}$ is less than $\ve r$ for all sufficiently small $r>0$. Then their complements
\begin{equation}\label{se}
\mathcal{S}_\ve=Z\backslash\mathcal{R}_\ve,
\end{equation}
 are closed subsets and $\mathcal{S}=\bigcup_{\ve>0}\mathcal{S}_\ve$.

Recall in \cite[Prop. 2.3]{STZ} it is shown the inclusion $N^\circ\hookrightarrow N$ extends to a homeomorphism $F:Z\to N$, which maps bijectively $Z\backslash\iota(N^\circ)$ onto $D\subset N$ (here $\iota:N^\circ\hookrightarrow Z$ is the canonical inclusion). We will use $F$ to identify $Z$ with $N$, suppressing $F$ from the notation, so for example $d_Z$ will be a distance function on $N$, etc. It is important to note that the Hausdorff measures and dimensions that we will use on $N$ are those of $d_Z$ (and not those of a smooth metric on $N$), unless otherwise specified.

\begin{rmk}\label{rkincl}
Since $\omega_{\rm can}$ is a smooth K\"ahler metric on $N^\circ$ and $\omega_j\to \omega_{\rm can}$ locally smoothly there, it follows that $\mathcal{S}\subset D$. This inclusion is strict in general, as can be seen for example in the case when $f:M\to N$ is an elliptic fibration of $K3$ surfaces with $24$ singular fibers of type $I_1$, which is the setup considered by Gross-Wilson \cite{GW}:  in this case $D$ is a finite set of points in $N\cong\mathbb{P}^1$ and from their work it follows that the metric $\omega_{\rm can}$ has tangent cone $\mathbb{C}$ at all points of $D$ (indeed, $\omega_{\rm can}$ has an explicit asymptotic behavior at points in $D$, see e.g. \cite[Table 1]{He}), so in this case $\mathcal{S}$ is empty even though the metric is not smooth at the points in $D$. This was extended in \cite{GTZ2} to arbitrary elliptically fibered $K3$ surfaces, and the tangent cone of $\omega_{\rm can}$ at any point $p\in D$ can be precisely determined from the Kodaira type of the singular fiber $f^{-1}(p)$, see \cite[Table 1]{He} (in particular, the tangent cone is $\mathbb{C}$ if and only if the singular fiber is of type $I_b, b\geq 0$).
\end{rmk}

Thanks to \cite[Proposition 4.1]{LS}, there is a weak limit $\mathrm{Ric}$ of $\mathrm{Ric}(\omega_j)$, which is a closed $(1,1)$-current on $N$, smooth on $N^\circ$, which locally differs from a positive current by $\ddbar$ of a continuous function, hence its Lelong numbers are well-defined. They also show that $\mathcal{S}$ is equal to the set of points $x\in N$ where $\nu(\mathrm{Ric},x)>0$. By \cite[Theorem 4.1]{LS} this is an at most countable union of closed analytic subsets of $N$, contained in the discriminant locus $D$, and so in particular the number of divisorial components of $\mathcal{S}$ is finite. Passing to the limit in \eqref{1} on $N^\circ$ shows that
\begin{equation}\label{riclower}\Ric\geq -C\omega_{\rm can},\end{equation}
holds pointwise on  $N^\circ$ and  weakly on all of $N$.

On the other hand, differentiating \eqref{ma} gives
\begin{equation}\label{a1}
\Ric(\omega_j)=\Ric(\omega_N)+\ddbar(v_j-\lambda \vp_j),
\end{equation}
where $\vp_j\to \vp$ uniformly on $N$ and $v_j$ decreases pointwise to $-\log\mathcal{F}$, and thus from the construction in \cite{LS} we see that we have
\begin{equation}\label{a2}
\Ric=\Ric(\omega_N)-\ddbar(\lambda\vp+\log\mathcal{F}),
\end{equation}
as currents on $N$, where recall that $\vp\in C^0(N)\cap C^\infty(N^\circ)$.

We write $D=\bigcup_i D_i$ for the decomposition into irreducible components (which are divisors, since as mentioned earlier we are assuming without loss that $D=D^{(1)}$, as  $\omega_{\rm can}$ extends smoothly across $D^{(2)}$ by \cite[Prop.3.1]{GTZ3}), and consider a composition of smooth blowups $\pi:\ti{N}\to N$ such that $E=\pi^{-1}(D)$ has simple normal crossings. Write  $\ti{D}$ for the proper transform of $D$ and $E=\pi^{-1}(D)=\ti{D}\cup F$ where $F$ is $\pi$-exceptional. Then $\pi^*\Ric$ has a Siu decomposition \cite{Siu}
\begin{equation}\label{lollobu}
\pi^*\Ric=\sum_i \nu(\Ric,D_i)[\ti{D}_i]+\sum_i \nu(\pi^*\Ric,F_i)[F_i]+\widetilde{\Ric}_{\rm sm},
\end{equation}
where $\nu(\Ric,D_i)=\nu(\pi^*\Ric,\ti{D}_i)$ and $\nu(\pi^*\Ric,F_i)$ are the generic Lelong numbers (which may be zero), and $\widetilde{\Ric}_{\rm sm}$ is a closed $(1,1)$-current on $\ti{N}$, smooth on $\ti{N}\backslash E$, which satisfies
\begin{equation}\label{riclower2}\widetilde{\Ric}_{\rm sm}\geq -C\pi^*\omega_{\rm can},\end{equation}
weakly on $\ti{N}$, and whose generic Lelong number along the $D_i$'s and $F_i$'s vanish. In fact we can say a lot more:

\begin{lem}\label{loglog}
For any $x\in E$ there is a neighborhood $U$ of $x$ in $\ti{N}$ and a constant $C_U$ so that on $U$ we can write
$$\widetilde{\Ric}_{\rm sm}=\ddbar\psi,$$
where $\psi$ satisfies
\begin{equation}\label{goalie}-C_U\log(-\log d_{g_{\ti{N}}}(z,E))\leq \psi(z)\leq C_U,
\end{equation}
for all $z\in U\backslash E$.
\end{lem}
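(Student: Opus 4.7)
The plan is to combine the local structure of the potential of $\pi^*\Ric$ from \eqref{a2} with the Siu decomposition via the Poincar\'e--Lelong formula, and then to invoke fine asymptotics of the direct image density $\mathcal{F}$ to produce the two-sided bound on $\psi$. Concretely, I would work on a contractible coordinate neighborhood $U$ of $x$ in $\ti N$ on which $E=\{s_1\cdots s_r=0\}$ is in snc form, and choose a smooth local potential $g$ for the smooth form $\pi^*\Ric(\omega_N)$ on $U$. Setting $u=g-\lambda\pi^*\vp-\pi^*\log\mathcal{F}$, equation \eqref{a2} gives $\pi^*\Ric=\ddbar u$ on $U$; by \eqref{ineq} and $\vp\in C^0(N)$ the function $u$ is quasi-psh on $U$, and by \eqref{asino2} it is bounded above there. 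Applying Poincar\'e--Lelong $[E_k]=c\,\ddbar\log|s_k|^2_{h_k}+\gamma_k$ (with $\gamma_k$ smooth) to each divisorial term of \eqref{lollobu} and absorbing the locally $\ddbar$-exact smooth remainder $-\sum_k a_k\gamma_k$ into an auxiliary smooth function $h$ on $U$ (via the local $\partial\bar\partial$-lemma on $U$), we arrive at $\widetilde{\Ric}_{\rm sm}=\ddbar\psi$ with
\[
\psi=u-c\sum_k a_k\log|s_k|^2_{h_k}+h.
\]

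The upper bound $\psi\leq C_U$ then comes from the Siu decomposition itself: since $u$ is quasi-psh on $U$, choosing $K$ large enough the current $\pi^*\Ric+K\omega_{\ti N}$ is a positive closed current on $U$ (here $\omega_{\ti N}$ is any fixed smooth K\"ahler form on $\ti N$); by \cite{Siu} its decomposition is $\sum_k a_k[E_k]+R$ with $R$ closed positive and with vanishing divisorial Lelong numbers along each $E_k$, and comparison with the above gives $R=\widetilde{\Ric}_{\rm sm}+K\omega_{\ti N}$. Locally $R=\ddbar\tilde v$ for a plurisubharmonic $\tilde v$, which is therefore bounded above, and since $\tilde v$ differs from $\psi$ by a smooth function we conclude $\psi\leq C_U$ after possibly shrinking $U$.

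The lower bound is the main obstacle, because the vanishing of the divisorial Lelong numbers of $\psi$ alone never forces any pointwise $\log(-\log d)$-type bound (for instance $-(-\log|z|)^{1/2}$ is quasi-psh with zero Lelong number at $0$ yet grows like $-\sqrt{-\log|z|}$, which violates any such bound). The plan is instead to use the fine asymptotic behavior of $\mathcal{F}$, aiming to show the pointwise estimate
\[
-\pi^*\log\mathcal{F}\;-\;c\sum_k a_k\log|s_k|^2_{h_k}\;\geq\;-C_U\log(-\log d_{g_{\ti N}}(z,E))
\]
on $U\setminus E$, which substituted into the formula above for $\psi$ yields the required lower bound. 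When $D^{(1)}\subset N$ is itself snc this is essentially the asymptotic expansion established in \cite{GTZ3} on $N$, pulled back via $\pi$. In the general case the plan is to pass to a simultaneous log resolution of the fibration $f:M\to N$ over $\ti N$, so that both the base discriminant and the total-space singularities become snc, apply \cite{GTZ3} on the resolved fibration, and descend the resulting asymptotic back to $\ti N$ via the functoriality of the canonical bundle formula (cf.\ \cite{Ki}); establishing this pointwise $\log(-\log)$-type asymptotic expansion of the density $\mathcal{F}$, rather than a mere Lelong-number equality, is the technical heart of the lemma.
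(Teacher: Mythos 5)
Your overall plan is sound, and you correctly identify the two essential components: (i) reducing to understanding $\pi^*\mathcal{F}$, since $\pi^*\Ric = \pi^*\Ric(\omega_N) - \ddbar(\lambda\pi^*\vp + \pi^*\log\mathcal{F})$ with $\vp$ continuous; and (ii) the observation that a vanishing Lelong number alone never forces a $\log(-\log)$-type pointwise lower bound, as your example $-(-\log|z|)^{1/2}$ nicely shows. Your route to the upper bound via the Siu decomposition of $\pi^*\Ric + K\omega_{\ti N}$ is valid but a bit roundabout; the paper gets $\psi\leq C_U$ directly from the lower bound $\pi^*\mathcal{F}\geq C^{-1}\big(\prod_j|s_{F_j}|^{2(b_j-\beta_j)}\prod_i|s_{\ti D_i}|^{2(1-\gamma_i)}\big)^{-1}$, which gives the same conclusion more cheaply and in the same notation used for the lower bound.

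The real gap is in the lower bound, which you (rightly) flag as the technical heart but then do not close. You assume the $\log(-\log)$-asymptotics of $\mathcal{F}$ are only available when $D^{(1)}$ is itself snc on $N$, and propose to handle the general case by passing to a further simultaneous log resolution and ``descending via functoriality of the canonical bundle formula.'' This is not what is needed, and it is left as a sketch. The references \cite[Theorems 2.3 and 7.1]{GTZ3} (and Lemma 4.1 there; cf.\ also \cite[Rmk 1.6]{Ki}) already establish the two-sided asymptotics for $J_\pi\pi^*\mathcal{F}$ precisely in the setup of this Lemma: $N$ smooth, $\pi:\ti N\to N$ an arbitrary log resolution with $E=\pi^{-1}(D)$ snc. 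There is no need for any further resolution or descent argument. Concretely, one has
\[
C^{-1}\frac{\prod_j|s_{F_j}|^{2\beta_j}}{\prod_i|s_{\ti D_i}|^{2(1-\gamma_i)}}\leq J_\pi\pi^*\mathcal{F}\leq C\frac{\prod_j|s_{F_j}|^{2\beta_j}}{\prod_i|s_{\ti D_i}|^{2(1-\gamma_i)}}\bigl(-\log d_{g_{\ti N}}(\cdot,E)\bigr)^C,
\]
and dividing by $J_\pi\sim\prod_j|s_{F_j}|^{2b_j}$ (using $b_j\geq\beta_j$) yields the needed two-sided control of $\pi^*\mathcal{F}$; $\psi$ is then taken to be a local continuous function plus $-\log\bigl(\pi^*\mathcal{F}\prod_j|s_{F_j}|^{2(b_j-\beta_j)}\prod_i|s_{\ti D_i}|^{2(1-\gamma_i)}\bigr)$, whose upper and lower bounds are now immediate. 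Your proposal is morally correct but would not compile into a complete proof without this citation and the explicit algebra; as written, it stops exactly where the paper invokes \cite{GTZ3}.
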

\begin{proof}
From \eqref{a2} we have
$$\pi^*\Ric=\pi^*\Ric(\omega_N)-\ddbar(\lambda\pi^*\vp+\log\pi^*\mathcal{F}),$$
and since $\vp\in C^0(N)\cap C^\infty(N^\circ)$, it suffices to understand the singularities of $\pi^*\mathcal{F}$ along $E$.

This is a consequence of results proved in \cite{GTZ3} (generalizing earlier results in \cite{GTZ2} when $n=1$) as follows. Define $J_\pi\geq 0$ by
$$\pi^*\omega_N^n=J_\pi \omega_{\ti{N}}^n.$$
Choosing defining sections $s_{\ti{D}_i},s_{F_i}$ and metrics $h_{\ti{D}_i},h_{F_i}$ for the line bundles corresponding to the irreducible components of $\ti{D}$ and $F$, we have that $J_\pi$ is comparable to $\prod_j|s_{F_j}|_{h_{F_j}}^{2b_j}$ with $b_j\in\mathbb{N}_{>0}$ (recall that $N$ is smooth).
Then \cite[Theorems 2.3, 7.1 and Lemma 4.1]{GTZ3} (also \cite[Rmk 1.6]{Ki}) give on $\ti{N}\backslash E$
$$C^{-1}\frac{\prod_{j} |s_{F_j}|_{h_{F_j}}^{2\beta_j}}{\prod_{i}|s_{\ti{D}_i}|_{h_{\ti{D}_i}}^{2(1-\gamma_i)}}\leq J_\pi\pi^*\mathcal{F}\leq C\frac{\prod_{j} |s_{F_j}|_{h_{F_j}}^{2\beta_j}}{\prod_{i}|s_{\ti{D}_i}|_{h_{\ti{D}_i}}^{2(1-\gamma_i)}}(-\log d_{g_{\ti{N}}}(\cdot, E))^C,$$
where $\beta_j\in\mathbb{R}$ and $0<\gamma_i\leq 1$, and we must also have $b_j\geq \beta_j$. Thus
$$C^{-1}\frac{1}{\prod_{j} |s_{F_j}|_{h_{F_j}}^{2(b_j-\beta_j)}\prod_{i}|s_{\ti{D}_i}|_{h_{\ti{D}_i}}^{2(1-\gamma_i)}}\leq \pi^*\mathcal{F}\leq C\frac{(-\log d_{g_{\ti{N}}}(\cdot, E))^C}{\prod_{j} |s_{F_j}|_{h_{F_j}}^{2(b_j-\beta_j)}\prod_{i}|s_{\ti{D}_i}|_{h_{\ti{D}_i}}^{2(1-\gamma_i)}},$$
which shows that we can take $\psi$ equal to the sum of a local continuous function plus
$$-\log\left(\pi^*\mathcal{F}\prod_{j} |s_{F_j}|_{h_{F_j}}^{2(b_j-\beta_j)}\prod_{i}|s_{\ti{D}_i}|_{h_{\ti{D}_i}}^{2(1-\gamma_i)}\right),$$
and it satisfies \eqref{goalie} as claimed.
\end{proof}

Next, recall that $\omega_{\rm can}$ has continuous potentials on $N$, hence the Bedford-Taylor products $\omega_{\rm can}^{j}, 2\leq j\leq n,$ are well-defined closed positive $(j,j)$-currents on $N$ by \cite{BT}, whose cohomology class agrees with $[\omega_N^j]$ by \cite[Corollary 9.3]{Dem}. Also, since the unbounded locus of the local potentials of $\Ric+C\omega_{\rm can}$ is contained in $D$, which has $g_N$-Hausdorff dimension at most $2n-2$, the wedge product
$$(\Ric+C\omega_{\rm can})\wedge\omega_{\rm can}^{n-1}$$
is a well-defined positive Borel measure on $N$ by \cite[Theorem 2.5]{Dem}, whose total mass equals
$$\int_N (\Ric(\omega_N)+C\omega_N)\wedge\omega_N^{n-1},$$
again by \cite[Corollary 9.3]{Dem}. Furthermore, from \eqref{a1}, \eqref{a2}, and since $\vp_j\to \vp$ uniformly and $v_j$ decreases to $-\log\mathcal{F}$, \cite[Proposition 2.9]{Dem} shows that
\begin{equation}\label{lollo}
(\Ric(\omega_j)+C\omega_j)\wedge\omega_j^{n-1}\to (\Ric+C\omega_{\rm can})\wedge\omega_{\rm can}^{n-1},
\end{equation}
weakly as measures on $N$.

The pullbacks currents $\pi^*\Ric$ and $\pi^*\omega_{\rm can}$ are defined in the usual way (pulling back $\de\db$-potentials), the measure
$\pi^*(\Ric+C\omega_{\rm can})\wedge(\pi^*\omega_{\rm can})^{n-1}$ on $\ti{N}$ is defined as above using \cite[Theorem 2.5]{Dem}, and since $\pi$ is a modification one easily checks that we have
\begin{equation}\label{lolobu}
\pi_*(\pi^*(\Ric+C\omega_{\rm can})\wedge(\pi^*\omega_{\rm can})^{n-1})=(\Ric+C\omega_{\rm can})\wedge\omega_{\rm can}^{n-1}.
\end{equation}
Using \eqref{lollobu}, on $\ti{N}$ we have
\begin{equation}\label{lolo}\begin{split}
&\pi^*(\Ric+C\omega_{\rm can})\wedge(\pi^*\omega_{\rm can})^{n-1}\\
&=\sum_i \nu(\Ric,D_i)[\ti{D}_i]\wedge(\pi^*\omega_{\rm can})^{n-1}+\sum_i \nu(\pi^*\Ric,F_i)[F_i]\wedge(\pi^*\omega_{\rm can})^{n-1}\\
&\ \ \ \ +(\widetilde{\Ric}_{\rm sm}+C\pi^*\omega_{\rm can})\wedge(\pi^*\omega_{\rm can})^{n-1}\\
&=\sum_i \nu(\Ric,D_i)[\ti{D}_i]\wedge(\pi^*\omega_{\rm can})^{n-1}+(\widetilde{\Ric}_{\rm sm}+C\pi^*\omega_{\rm can})\wedge(\pi^*\omega_{\rm can})^{n-1},
\end{split}\end{equation}
because each term $[F_i]\wedge(\pi^*\omega_{\rm can})^{n-1}$ vanishes as $F_i$ is $\pi$-exceptional.

Let now $U_r\subset N$ be the $r$-neighborhood of $D$ with respect to $\omega_N$. We then have the following claim:

\begin{lem}\label{cuz}
For any continuous nonnegative function $h$ on $N$ we have
\[
\lim_{r\to 0}\int_{U_r} h(\Ric+C\omega_{\rm can})\wedge \omega_{\rm can}^{n-1} = \sum_i \nu(\Ric,D_i)\int_{D_i}h\omega_{\rm can}^{n-1}.
\]
\end{lem}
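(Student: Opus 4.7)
The plan is to pull the integral back to the snc log-resolution $\ti{N}$ and apply the Siu decomposition identity \eqref{lolo}. By the pushforward identity \eqref{lolobu}, for any $r>0$ we have
$$\int_{U_r}h(\Ric+C\omega_{\rm can})\wedge\omega_{\rm can}^{n-1}=\int_{\pi^{-1}(U_r)}\pi^*h\cdot\pi^*(\Ric+C\omega_{\rm can})\wedge(\pi^*\omega_{\rm can})^{n-1},$$
and substituting \eqref{lolo} splits the right-hand side as a sum of divisorial terms $\sum_i\nu(\Ric,D_i)\int_{\ti{D}_i\cap\pi^{-1}(U_r)}\pi^*h\,(\pi^*\omega_{\rm can})^{n-1}$ and a smooth-Siu term $\int_{\pi^{-1}(U_r)}\pi^*h\,\mu$, where $\mu:=(\widetilde{\Ric}_{\rm sm}+C\pi^*\omega_{\rm can})\wedge(\pi^*\omega_{\rm can})^{n-1}$.

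The divisorial terms are in fact independent of $r$: since $\ti{D}_i\subset E=\pi^{-1}(D)\subset\pi^{-1}(U_r)$ for every $r>0$, each such integral equals $\int_{\ti{D}_i}\pi^*h\,(\pi^*\omega_{\rm can})^{n-1}$. Because $\pi|_{\ti{D}_i}\colon\ti{D}_i\to D_i$ is birational and $\omega_{\rm can}$ has continuous local potentials, the pushforward along $\pi|_{\ti{D}_i}$ yields $\int_{\ti{D}_i}\pi^*h\,(\pi^*\omega_{\rm can})^{n-1}=\int_{D_i}h\,\omega_{\rm can}^{n-1}$; summing over $i$ gives precisely the RHS of the lemma. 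It therefore remains to show $\int_{\pi^{-1}(U_r)}\pi^*h\,\mu\to 0$ as $r\to 0$, and since $|h|\leq\|h\|_\infty$ and $\mu\geq 0$, this reduces to proving $\mu(E)=0$.

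This last step is the main obstacle. By Lemma \ref{loglog}, near any point of $E$ we may write $\widetilde{\Ric}_{\rm sm}+C\pi^*\omega_{\rm can}=\ddbar\Psi$ for a plurisubharmonic $\Psi$ satisfying $-C\log(-\log d_{g_{\ti{N}}}(\cdot,E))-C\leq\Psi\leq C$; because $\log(-\log d)$ grows more slowly than any power of $-\log d$, $\Psi$ has vanishing Lelong number at every point and its unbounded locus is contained in the pluripolar set $E$. Combined with the continuity of the local potentials of $\pi^*\omega_{\rm can}$, the Bedford-Taylor-Demailly theory of Monge-Amp\`ere products with small unbounded locus (cf.\ \cite[Ch.\ III]{Dem}) ensures that $\mu$ is a well-defined positive Radon measure placing no mass on pluripolar sets, in particular $\mu(E)=0$. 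Since $\mu$ is finite and the open sets $\pi^{-1}(U_r)$ decrease to $\bigcap_{r>0}\pi^{-1}(U_r)=E$ as $r\to 0$, continuity from above gives $\mu(\pi^{-1}(U_r))\to\mu(E)=0$, completing the proof.
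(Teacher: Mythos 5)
Your reduction steps are correct and mirror the paper's: you pull back via \eqref{lolobu}, substitute \eqref{lolo}, observe that the divisorial terms are independent of $r$ since $\ti{D}_i\subset E\subset\pi^{-1}(U_r)$, push forward along the birational map $\pi|_{\ti{D}_i}$, and reduce everything to showing $\mu(E)=0$ where $\mu=(\widetilde{\Ric}_{\rm sm}+C\pi^*\omega_{\rm can})\wedge(\pi^*\omega_{\rm can})^{n-1}$.

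The gap is in your final step. You assert that because $\widetilde{\Ric}_{\rm sm}+C\pi^*\omega_{\rm can}$ has local potentials with at worst log-log singularities (hence vanishing Lelong numbers and small unbounded locus) and $\pi^*\omega_{\rm can}$ has continuous potentials, the ``Bedford--Taylor--Demailly theory'' automatically yields that $\mu$ places no mass on pluripolar sets. This is not a theorem of that theory. Demailly's Theorem 2.5 in \cite{Dem} only guarantees that $\mu$ is a well-defined positive measure; it gives \emph{no} general non-charging statement for pluripolar sets when the potentials are unbounded, and vanishing Lelong numbers alone do not suffice to exclude mass accumulation on the unbounded locus for mixed Monge--Amp\`ere products of this kind. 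The paper's proof is substantially more specific: it uses \eqref{goalie} and Demailly's regularization \cite[Corollary 6.4]{Dem92} to conclude that the cohomology class $[\widetilde{\Ric}_{\rm sm}+C\pi^*\omega_{\rm can}]$ is nef, then adds $\ve\omega_{\ti{N}}$ to get a K\"ahler reference class, shows that the log-log singular potential lies in the finite-energy class $\mathcal{E}^1(\ti{N},\theta+\ve\omega_{\ti{N}}+\ddbar\vp_\ve)$ by invoking \cite[Proposition 2.3]{Gue}, and only then applies the Guedj--Zeriahi theorem \cite[Theorem 1.3]{GZ}, which says that mixed Monge--Amp\`ere products of $\mathcal{E}^1$ potentials against currents with bounded potentials do not charge pluripolar sets. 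It also separately disposes of the $\omega_{\ti{N}}\wedge(\pi^*\omega_{\rm can})^{n-1}$ term via Bedford--Taylor. None of this (nefness, the $\mathcal{E}^1$ membership, the Guedj--Zeriahi mass statement) is a formal consequence of the ``small unbounded locus'' framework you cite, so as written your argument does not close the proof.
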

\begin{proof}
Let $\ti{U}_r=\pi^{-1}(U_r)$ (a shrinking family of open neighborhoods of $E$) and $\ti{h}=\pi^*h$, then using \eqref{lolo} we have
\[\begin{split}
&\lim_{r\to 0}\int_{U_r} h(\Ric+C\omega_{\rm can})\wedge \omega_{\rm can}^{n-1}= \lim_{r\to 0}\int_{\ti{U}_r} \ti{h}\pi^*(\Ric+C\omega_{\rm can})\wedge(\pi^*\omega_{\rm can})^{n-1}\\
&=\sum_i\nu(\Ric,D_i)\int_{\ti{D}_i}\ti{h}\pi^*\omega_{\rm can}^{n-1}+\lim_{r\to 0}\int_{\ti{U}_r}\ti{h} (\widetilde{\Ric}_{\rm sm}+C\pi^*\omega_{\rm can})\wedge(\pi^*\omega_{\rm can})^{n-1}\\
&=\sum_i\nu(\Ric,D_i)\int_{D_i}h\omega_{\rm can}^{n-1}+\lim_{r\to 0}\int_{\ti{U}_r} \ti{h}(\widetilde{\Ric}_{\rm sm}+C\pi^*\omega_{\rm can})\wedge(\pi^*\omega_{\rm can})^{n-1},
\end{split}\]
and so, since $\ti{h}$ is continuous, it suffices to show that $(\widetilde{\Ric}_{\rm sm}+C\pi^*\omega_{\rm can})\wedge(\pi^*\omega_{\rm can})^{n-1}$ puts no mass on $E$. Since $\omega_{\rm can}$ has continuous potentials and  $\widetilde{\Ric}_{\rm sm}$ has local potentials with at worst log-log singularities (by Lemma \ref{loglog}), this is then a well-known fact: let $\theta$ be a smooth form on $\ti{N}$ cohomologous to $\widetilde{\Ric}_{\rm sm}+C\pi^*\omega_{\rm can}$, and write $\widetilde{\Ric}_{\rm sm}+C\pi^*\omega_{\rm can}=\theta+\ddbar u\geq 0$ where by Lemma \ref{loglog} the function $u$ satisfies the bounds in \eqref{goalie}. As a consequence of Demailly's regularization \cite[Corollary 6.4]{Dem92}, the cohomology class $[\theta]$ is thus nef, and so for any $\ve>0$ we can find a smooth function $\vp_\ve$ such that $\theta+\ve\omega_{\ti{N}}+\ddbar\vp_\ve$ is a K\"ahler metric on $\ti{N}$. Since $\pi^*\omega_{\rm can}$ has continuous potentials, it follows from Bedford-Taylor \cite{BT} that $\omega_{\ti{N}}\wedge(\pi^*\omega_{\rm can})^{n-1}$ puts no mass on $E$. Thus, to prove our claim it suffices to show that $(\theta+\ve\omega_{\ti{N}}+\ddbar u)\wedge (\pi^*\omega_{\rm can})^{n-1}$ puts no mass on $E$, and this follows e.g. from \cite[Theorem 1.3]{GZ} since $u-\vp_\ve$ belongs to $\mathcal{E}^1(\ti{N},\theta+\ve\omega_{\ti{N}}+\ddbar\vp_\ve)$ as it has at worst log-log singularities, e.g. by \cite[Proposition 2.3]{Gue}.
\end{proof}

The following Proposition uses similar ideas as in \cite[Prop. 5.1]{LS}:

\begin{prop}\label{estimatiomanifesta}
For any continuous nonnegative function $h$ on $N$,
\begin{equation}
\int_\Sigma h(2\pi-\theta(x))d\mathcal{H}^{2n-2} \leq C_n
\sum_i \nu(\Ric,D_i)\int_{D_i}h\omega_{\rm can}^{n-1}.
\end{equation}

\end{prop}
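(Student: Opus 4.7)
The plan is to reduce the proposition to a pointwise density statement for the Borel measure
$$\mu := (\Ric + C\omega_{\rm can}) \wedge \omega_{\rm can}^{n-1}$$
on $N$. Since $\Sigma \subset \mathcal{S} \subset D \subset U_r$ for every $r > 0$, Lemma \ref{cuz} yields
$$\sum_i \nu(\Ric,D_i)\int_{D_i} h\,\omega_{\rm can}^{n-1} = \lim_{r \to 0}\int_{U_r} h \, d\mu \geq \int_\Sigma h \, d\mu$$
for every continuous nonnegative $h$. Hence it would suffice to establish the measure inequality $(2\pi - \theta(x))\,d\mathcal{H}^{2n-2} \leq C_n\,d\mu$ of Borel measures on $\Sigma$. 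By the Radon-measure differentiation theorem on the doubling space $(Z,d_Z)$, this in turn reduces to the one-sided density bound
$$\liminf_{r \to 0^+} \frac{\mu(B_{d_Z}(x, r))}{r^{2n-2}} \geq c_n \, (2\pi - \theta(x))$$
at $\mathcal{H}^{2n-2}$-almost every $x \in \Sigma$, where $c_n > 0$ is a dimensional constant.

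To establish this density bound, I would pass to the approximating smooth metrics $\omega_j$ from Proposition \ref{gh}. Using the Gromov-Hausdorff convergence $(N,\omega_j)\to(Z,d_Z)$, pick $x_j\in N$ with $x_j\to x$. By the weak convergence \eqref{lollo} of $(\Ric(\omega_j)+C\omega_j)\wedge\omega_j^{n-1}$ to $\mu$, for Lebesgue-a.e.\ $r>0$ one has
$$\mu(B_{d_Z}(x,r)) = \lim_{j\to\infty}\int_{B_{\omega_j}(x_j,r)} (\Ric(\omega_j)+C\omega_j)\wedge\omega_j^{n-1}.$$
At $\mathcal{H}^{2n-2}$-a.e.\ $x\in\Sigma$ the unique tangent cone is $\R^{2n-2}\times\C_{\theta(x)}$, and on this model the distributional Ricci current equals $(2\pi-\theta(x))$ times integration over the flat axis $\R^{2n-2}\times\{0\}$. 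Integrating $\Ric\wedge\omega^{n-1}$ against a radius-$r$ ball in the cone thus produces the desired cone contribution $c_n(2\pi-\theta(x))\,r^{2n-2}$; the extra term coming from $C\omega^n$ is negligible since it scales as $r^{2n}$.

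The main obstacle is making this cone-versus-$(N,\omega_j)$ comparison quantitative, and for this I would follow the template of \cite[Prop.~5.1]{LS}. Using Cheeger-Colding almost-rigidity (augmented by Cheeger-Jiang-Naber to handle uniqueness of the tangent cone $\mathcal{H}^{2n-2}$-a.e.\ on $\Sigma$), one gets an almost-cone structure at small scales on $(N,\omega_j)$ for $j$ large. A Chern-Weil identity then expresses $\Ric(\omega_j)\wedge\omega_j^{n-1}$ via the scalar curvature integrated against the volume form, and its concentration along the almost-cone axis recovers the angle deficit with the correct dimensional factor; the fact that $\omega_j\geq c^{-1}\omega_N$ from \eqref{sl} prevents any degeneration of the $\omega_j^{n-1}$ weight transverse to the axis. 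Finally, a Vitali covering argument together with the upper semicontinuity of $\theta$ globalizes the pointwise density estimate to the desired measure inequality, which combined with Lemma \ref{cuz} delivers the proposition for any continuous nonnegative $h$.
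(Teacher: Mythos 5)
Your proposal is essentially the paper's own argument, reorganized: both reduce to a local angle-deficit lower bound for $\int R(\omega_j)\omega_j^n$ on small balls (drawn from Liu--Sz\'ekelyhidi), combine it with the upper semicontinuity of $\theta$ and a Vitali cover, and then identify the limiting mass via the weak convergence \eqref{lollo} and Lemma \ref{cuz}. The paper does the covering at a fixed small scale $r$ and only sends $r,\delta,\ve\to 0$ at the very end, which sidesteps the measure-theoretic formalities hidden in your appeal to a ``Radon-measure differentiation theorem'' (what is actually needed is a Frostman/Vitali-type lemma, not Lebesgue differentiation), but the geometric content and key inputs are the same.
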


\begin{proof}
Given a small $\ve>0$, we consider the closed subset $\mathcal{S}_\ve\subset \mathcal{S}$ defined in \eqref{se} and let $\Sigma_\ve=\mathcal{S}_\ve\backslash \mathcal{S}^{2n-3}$ (and again remove a further subset of vanishing $\mathcal{H}^{2n-2}$ so that tangent cones at all points of $\Sigma_\ve$ are unique).
Given also a small $\delta>0$,
it suffices to prove
\[
\int_{\Sigma_\ve} h(2\pi-\theta(x))d\mathcal{H}^{2n-2} \leq C_n\sum_i \nu(\Ric,D_i)\int_{D_i}h\omega_{\rm can}^{n-1}+C\delta,
\]
where $C$ does not depend on $\ve,\delta$ but is allowed to depend on $h$, as taking the limit $\delta\to 0$ and $\ve\to 0$ gives the claim.
Given an arbitrarily small $r$ depending on $\delta$, since $\mathcal{S}_\ve\setminus \Sigma_\ve$ has Hausdorff codimension at least 3, we can take a cover with
\begin{equation}\label{complementmeasure}
\mathcal{S}_\ve \setminus \Sigma_\ve\subset \bigcup_i B(y_i, r_i'), \quad \sum_i r_i'^{2n-2}\ll \delta, \quad r_i'<r,
\end{equation}
where here and for the rest of this section, $B(x,r)$ denotes the $d_Z$-geodesic ball centered at $x$ with radius $r$, while $B_j(x,r)$ will denote the $\omega_j$-geodesic ball, and $U_{j,r}$ will be the $r$-neighborhood of $D$ with respect to $\omega_j$.
Since $\mathcal{S}_\ve$ is compact, so is the closed subset $K=\mathcal{S}_\ve\setminus \cup_i B(y_i, r_i')$.

For every $x\in K$, by semicontinuity  we can find a small ball $B(x,r_x)$ with $r_x\ll r$ such that
\[
2\pi-\theta(y)\leq (2\pi- \theta(x))(1-\delta ), \quad \textrm{for all } y\in B(x, 10r_x).
\]
Choosing $r_x$ sufficiently small, we can make the rescaled ball $r_x^{-1} B(x,r_x)$ arbitrarily close to the tangent cone at $x$ in the Gromov-Hausdorff sense. Using \cite[Prop. 3.3]{LS}, for $j$ sufficiently large depending on $x$,
\[
\int_{B_j(x,\eta r_x)} R(\omega_j) \frac{\omega_j^n}{n!} \geq \omega_{2n-2} (2\pi- \theta(x)) (1-\delta) (\eta r_x)^{2n-2},\quad \textrm{for all } \frac{1}{10}<\eta<10.
\]
whence
\[
\int_{B_j(x,\eta r_x)} R(\omega_j) \frac{\omega_j^n}{n!} \geq  \omega_{2n-2}(1-2\delta)(\eta r_x)^{2n-2} \sup_{B(x,10r_x)  }(2\pi-\theta).
\]
By compactness, we can cover $K$ with finitely many such balls $B(x_i,r_i)$ with $r_i=r_{x_i}$, so that the inequalities hold for $j$ large enough independent of $x_i\in K$. Taking a Vitali subcover, we may further assume that $B(x_i,r_i/3)$ are mutually disjoint, so for large enough $j$ we have $B_j(x_i,r_i/4)$ mutually disjoint. Using also the continuity of $h$, for $r$ sufficiently small depending on $\delta$ (and on $h$), and $j$ large enough,
\[
\begin{split}
&\sum_i \omega_{2n-2}(1-3\delta)\left(\frac{r_i}{4}\right)^{2n-2} \sup_{B(x_i,r_i)  }h(2\pi-\theta)  \\
&\leq \sum_i \int_{B_j(x_i,r_i/4)} h (R(\omega_j)+nC) \frac{ \omega_j^n}{n!}
\\
& \leq \int_{ U_{j,r}} h(\Ric(\omega_j) +C\omega_j) \wedge \frac{\omega_j^{n-1} }{(n-1)!}
\\
& \leq \int_{ U_r }  h (\Ric+C\omega_{\rm can}) \wedge \frac{\omega_{\rm can}^{n-1}}{(n-1)!} +\delta.
\end{split}
\]
using \eqref{lollo}.
Combining this with (\ref{complementmeasure}), and take the limit $r\to 0$ using Lemma \ref{cuz} gives
\[\begin{split}
4^{2-2n} \int_{\Sigma_\ve} h(2\pi-\theta(x))d\mathcal{H}^{2n-2} &\leq \lim_{r\to 0}\int_{ U_r } h \Ric \wedge \frac{\omega_{\rm can}^{n-1}}{(n-1)!} +C\delta\\
&=\sum_i \nu(\Ric,D_i)\int_{D_i}h\frac{\omega_{\rm can}^{n-1}}{(n-1)!}+C\delta,
\end{split}\]
as required.\end{proof}

Recall that the singular set satisfies
$$\mathcal{S}=\{x\in N\ |\ \nu(\Ric,x)>0\}\subset D,$$
is an at most countable union of closed analytic subvarieties of $N$. Write $\mathcal{S}=\bigcup_{i'} D_{i'} \cup \mathcal{S}_{\geq 2}$ where $\bigcup_{i'} D_{i'}$ is the (finite) union of divisorial components of $\mathcal{S}$ (which are necessarily also divisorial components of $D$, so equal to a subset of the $D_i$'s, which we have indexed by $i'$ for clarity) and $\mathcal{S}_{\geq 2}$ is an at most countable union of closed irreducible analytic subvarieties of $N$ of complex codimension at least $2$.

\begin{lem}\label{vanish}
If $W\subset \mathcal{S}$ is an at most countable union of closed irreducible analytic subvarieties of $N$ of complex codimension at least $2$, then
$$\mathcal{H}^{2n-2}(W)=0,$$
where as usual $\mathcal{H}$ is the Hausdorff measure of $d_Z$.
\end{lem}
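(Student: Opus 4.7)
The plan is to integrate the inequality of Proposition \ref{estimatiomanifesta} against cutoff functions concentrated near $W$, exploiting that $W$ is of too small complex dimension to carry mass of the measures $\omega_{\rm can}^{n-1}|_{D_i}$. By countable subadditivity of Hausdorff measure, we may assume $W$ is a single closed irreducible analytic subvariety of $N$ of complex codimension at least $2$. Since $W\subset\mathcal{S}$, decompose $W=(W\cap\Sigma)\cup(W\setminus\Sigma)$; the set $W\setminus\Sigma$ lies in $\mathcal{S}^{2n-3}$ together with the zero-$\mathcal{H}^{2n-2}$-measure set removed from $\Sigma$ for tangent-cone uniqueness, so by Cheeger-Colding $\mathcal{H}^{2n-2}(W\setminus\Sigma)=0$. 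The task therefore reduces to showing $\mathcal{H}^{2n-2}(W\cap\Sigma)=0$.

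Choose continuous cutoffs $h_\ve:N\to[0,1]$ with $h_\ve\equiv 1$ on $W$ and $h_\ve\equiv 0$ outside the $\ve$-neighborhood of $W$ in $g_N$, so that $h_\ve\searrow\chi_W$ pointwise as $\ve\to 0$. Inserting $h_\ve$ into Proposition \ref{estimatiomanifesta} and letting $\ve\to 0$: on the left-hand side, dominated convergence with dominator $(2\pi-\theta)\chi_\Sigma$ (integrable by the case $h\equiv 1$ of the same proposition) gives the limit $\int_{W\cap\Sigma}(2\pi-\theta)\,d\mathcal{H}^{2n-2}$. For the right-hand side, fix a component $D_i$ of $D$: since $W$ has complex codimension $\geq 2$ while $D_i$ has complex codimension $1$, $W\not\supset D_i$, so $W\cap D_i$ is a proper closed analytic subvariety of $D_i$ of complex dimension at most $n-2$. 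On the complex manifold $D_i^{\rm reg}$ the restricted current $\omega_{\rm can}|_{D_i^{\rm reg}}$ still has continuous local potentials, so by Bedford-Taylor its top Monge-Amp\`ere $(\omega_{\rm can}|_{D_i^{\rm reg}})^{n-1}$ assigns zero mass to pluripolar sets, in particular to the proper analytic subvariety $W\cap D_i^{\rm reg}$. Since $D_i^{\rm sing}$ is of lower dimension and also receives no mass, we conclude that $\omega_{\rm can}^{n-1}|_{D_i}$ vanishes on $W\cap D_i$, and dominated convergence (using finiteness of the cohomological total mass $\int_{D_i}\omega_{\rm can}^{n-1}$) then gives $\int_{D_i}h_\ve\,\omega_{\rm can}^{n-1}\to 0$.

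Passing to the limit in Proposition \ref{estimatiomanifesta} yields $\int_{W\cap\Sigma}(2\pi-\theta)\,d\mathcal{H}^{2n-2}=0$. Since $\theta(x)<2\pi$ strictly on $\Sigma$ (equality would make the tangent cone $\mathbb{R}^{2n}$ and $x$ regular, contradicting $x\in\mathcal{S}$), the integrand is strictly positive on $W\cap\Sigma$, forcing $\mathcal{H}^{2n-2}(W\cap\Sigma)=0$. I expect the most delicate point to be the pluripotential-theoretic vanishing on each $D_i$: one must restrict to the complex manifold $D_i^{\rm reg}$ in order to invoke the classical fact that the top Monge-Amp\`ere of a continuous psh function charges no pluripolar set, and then handle the negligible contribution of $D_i^{\rm sing}$ separately.
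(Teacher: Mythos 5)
Your proof is correct and follows essentially the same route as the paper: reduce to a single irreducible component, test Proposition \ref{estimatiomanifesta} against cutoffs shrinking to that component, and conclude from $\theta<2\pi$ on $\Sigma$. The only difference is that you spell out the pluripotential-theoretic reason (Bedford--Taylor measures of continuous potentials charge no pluripolar sets) that $\int_{D_i}h_\ve\,\omega_{\rm can}^{n-1}\to 0$, which the paper leaves implicit.
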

\begin{proof}
It suffices to show that $\mathcal{H}^{2n-2}(V)=0$ for any irreducible component $V$ of $W$, and we can also implicitly remove $\mathcal{S}^{2n-3}$ since it has vanishing Hausdorff measure, so that we can pretend that $V\subset\Sigma$.
Let $h_\ve\geq 0$ be a family of smooth cutoff functions, with $h_\ve$ supported in $B_{g_N}(V,2\ve)$ and $h_\ve\equiv 1$ on $B_{g_N}(V,\ve)$, and applying Proposition \ref{estimatiomanifesta} gives
\[\begin{split}
\int_V (2\pi-\theta(x))d\mathcal{H}^{2n-2}&\leq \int_\Sigma h_\ve(2\pi-\theta(x))d\mathcal{H}^{2n-2} \leq C_n
\sum_i \nu(\Ric,D_i)\int_{D_i}h_\ve\omega_{\rm can}^{n-1}\\
&\leq C_n \sum_i \nu(\Ric,D_i)\int_{B_{g_N}(V,2\ve)}\omega_{\rm can}^{n-1}\to 0,
\end{split}\]
as $\ve\to 0$. This shows that
$$\int_V (2\pi-\theta(x))d\mathcal{H}^{2n-2}=0,$$
but since $\theta(x)<2\pi$ for all $x\in \Sigma$, this gives $\mathcal{H}^{2n-2}(V)=0$.
\end{proof}

\begin{prop}\label{herrlelong}
Let $x\in \Sigma$ be a point with tangent cone $\mathbb{R}^{2n-2}\times \mathbb{C}_{\theta(x)}$. Then the Lelong number of $\Ric$ at $x$ is at most  $2\pi-\theta(x)$.
\end{prop}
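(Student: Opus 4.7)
I would compute the Lelong number of $\Ric$ at $x$ as an asymptotic mass density of the Bedford--Taylor trace measure $\mu:=(\Ric+C\omega_{\rm can})\wedge\omega_{\rm can}^{n-1}/(n-1)!$, which is a well-defined positive Borel measure on $N$ since $\omega_{\rm can}$ has continuous potentials and $\Ric+C\omega_{\rm can}$ is a closed positive current modulo $\ddbar$ of a continuous function (by \eqref{riclower} and \eqref{a2}).

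First, using Demailly's monotone density formula for Lelong numbers with a smooth reference form $\omega_N$, together with the comparison $\omega_{\rm can}\geq C^{-1}\omega_N$ from \eqref{sl} and the tangent cone splitting at $x$ which makes $d_Z$-balls transversally almost-Euclidean in the $(2n-2)$ directions, I would show
\[
\nu(\Ric,x)\leq \liminf_{r\to 0}\frac{\mu(B(x,r))}{\omega_{2n-2}r^{2n-2}}.
\]
Second, by the weak convergence \eqref{lollo} combined with a cutoff argument based on the continuity of $d_Z$ (to ensure no mass on $\partial B(x,r)$ for a.e.\ small $r$), and using the pointwise trace identity $\Ric\wedge\omega^{n-1}/(n-1)! = R\omega^n/n!$,
\[
\mu(B(x,r)) = \lim_{j\to\infty}\int_{B_j(x,r)}(\Ric(\omega_j)+C\omega_j)\wedge\omega_j^{n-1}/(n-1)! = \lim_{j\to\infty}\int_{B_j(x,r)}\bigl(R(\omega_j)+nC\bigr)\omega_j^n/n!.
\]

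Third, as $r\to 0$ the rescaled ball $r^{-1}B(x,r)$ converges in the Gromov--Hausdorff sense to the unit ball in the tangent cone $\mathbb{R}^{2n-2}\times\mathbb{C}_{\theta(x)}$, whose ``Ricci'' equals $(2\pi-\theta(x))[\mathbb{R}^{2n-2}\times\{0\}]$ as a distributional $(1,1)$-current, and whose wedge with the cone Kähler form $\omega^{n-1}/(n-1)!$ gives exact mass $(2\pi-\theta(x))\omega_{2n-2}r^{2n-2}$ on the $r$-ball, while the $C\omega_j^n$ correction is only $O(r^{2n})$. The lower bound ``$\geq$'' in the resulting asymptotic is precisely the Liu--Sz\'ekelyhidi Proposition~3.3 used in the proof of Proposition~\ref{estimatiomanifesta}; the matching upper bound ``$\leq$'' relies on the uniqueness of the tangent cone at $x\in\Sigma$ (after discarding an $\mathcal{H}^{2n-2}$-null subset) together with a Colding--Naber-type confinement of the scalar curvature concentration to the codimension-$2$ apex locus. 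Dividing by $\omega_{2n-2}r^{2n-2}$ and letting $r\to 0$ then yields the desired bound $\nu(\Ric,x)\leq 2\pi-\theta(x)$.

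The main obstacle is the matching upper bound in the third step: the LS estimate cited in the proof of Proposition~\ref{estimatiomanifesta} supplies only the lower bound on the scalar curvature integral, and pinning the integral to its tangent cone value requires sharp regularity of the Gromov--Hausdorff convergence at codimension-$2$ generic singular points. A secondary technical point is justifying the $d_Z$-ball version of Demailly's density formula in Step~1 in the singular metric setting.
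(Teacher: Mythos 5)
Your proposal is headed in the right general direction — you reduce the Lelong-number bound to a comparison of the scalar curvature mass of the approximants $\omega_j$ with the mass of the tangent cone, and you correctly identify that the dangerous point is a matching \emph{upper} bound on the small-ball mass, because the Liu--Sz\'ekelyhidi argument used in Proposition~\ref{estimatiomanifesta} only supplies a lower bound. You acknowledge that this upper bound is an ``obstacle'' and gesture at a ``Colding--Naber-type confinement,'' but you do not supply an argument. This is exactly the gap: the reason a naive upper bound fails is that one must rule out scalar curvature escaping to, or concentrating away from, the codimension-$2$ apex locus at the given scale $r$, and weak convergence of measures alone does not do this. The paper resolves this by a rather different mechanism. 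It does not work with $d_Z$-balls and $\omega_{\rm can}^{n-1}$; instead it uses the Liu--Sz\'ekelyhidi holomorphic coordinates $w_1,\dots,w_n$ from \cite[Prop.~3.2]{LS} on the coordinate ball $\{\sum|w_i|^2<1\}$ (at scale $r$), measures the Lelong number via $\Ric(\omega_j)\wedge\omega_{\mathbb C^n}^{n-1}$, and invokes the Cheng--Yau gradient estimate $\omega_{\mathbb C^n}\leq Cr^{-2}\omega_j$ together with Bishop volume comparison to discard the $C\omega_j$ correction. The crucial step that produces an \emph{equality} (and thus both bounds at once) is the $L^1$-convergence of $\log|s_j|_{g_j}$ to $\log|s|_{g_\infty}$ extracted from the proof of \cite[Prop.~3.3]{LS}, after which one writes $\Ric(\omega_j)=\ddbar\log|s_j|^2_{g_j}$ and integrates by parts against a smooth cutoff $h$ to pass to the tangent cone. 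Your proposal has no analogue of this $L^1$-convergence step, which is where the upper bound actually comes from.

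A secondary concern is your Step~1: you want to bound $\nu(\Ric,x)$ by the density of the Bedford--Taylor measure $(\Ric+C\omega_{\rm can})\wedge\omega_{\rm can}^{n-1}/(n-1)!$ on $d_Z$-balls. But the Lelong number is defined via the Euclidean $(n-1,n-1)$-form in local holomorphic coordinates, not via $\omega_{\rm can}^{n-1}$, and $\omega_{\rm can}$ is only \emph{comparable} to $\omega_N$ (by \eqref{sl}) with a constant that does not tend to $1$ near the singular set; since the tangent cone has cone angle $<2\pi$ in the transverse direction, $\omega_{\rm can}$ is not asymptotically Euclidean there. So the asserted inequality between $\nu(\Ric,x)$ and the $d_Z$-ball density of $\mu$ is not justified as stated, and it is not simply a matter of ``transversal almost-Euclideanness.'' The paper avoids this entirely by evaluating the Lelong-number integral against $\omega_{\mathbb C^n}^{n-1}$ in the good coordinates $w_i$ directly, and by appealing to monotonicity of the Lelong number in $r$ to reduce to a single small scale.
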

We believe that the Lelong number of $\Ric$ at $x$ is actually equal to $2\pi-\theta(x)$, but this does not seem to follow from our arguments below.

\begin{proof}
We will write $\theta=\theta(x)$.
By assumption, the rescaled balls $r^{-1}B(x, r^{1/4})$ converge in the pointed Gromov-Hausdorff sense to the tangent cone $\mathbb{C}^{n-1}_{z_1,\ldots z_{n-1}}\times (\mathbb{C}_\theta)_{z_{n}}$ as $r\to 0$. The metric on the tangent cone is written as
\[
g_\infty=\sum_{i=1}^{n-1}|dz_i|^2 + |z_n|^{-2(1-\theta/2\pi)} |dz_n|^2.
\]
Using \cite[Prop 3.2]{LS}, we can find holomorphic coordinates $w_1, \dots , w_n$ (depending on $r$) on $r^{-1}B(x, r^{1/3})$ converging to $z_1,\dots, z_n$ as $r\to 0$. We can regard $w_i$ also as holomorphic coordinates for the smooth approximating metrics $\omega_j$, because as we know the Gromov-Hausdorff limit $Z$ is homeomorphic to $N$.

 Given any small $\delta>0$, our goal is to show the Lelong number of $\Ric$ at $x\in r^{-1}B(x, r^{1/2})$ is at most $2\pi-\theta+ \delta$.
 By the monotonicity of the Lelong number,  it suffices that for $r\ll 1$,
\[
\frac{1}{\omega_{2n-2}(n-1)!}\int_{ \sum |w_i|^2<1} (\Ric+C\omega_{\rm can}) \wedge \omega_{\C^n}^{n-1} \leq 2\pi-\theta+ \delta,
\]
where $\omega_{\C^n}= \frac{\sqrt{-1}}{2}\sum_{i=1}^n dw_i\wedge d\bar{w}_i.$
Since $\Ric(\omega_j)\to \Ric,\omega_j\to\omega_{\rm can}$ weakly, and $\Ric(\omega_j)+ C\omega_j\geq 0$, this reduces to showing for $j\gg 1$ depending on $\delta, r$,
\begin{equation}\label{g1}
\frac{1}{\omega_{2n-2} (n-1)!}\int_{ \sum |w_i|^2<1} (\Ric(\omega_j)+C\omega_j) \wedge \omega_{\C^n}^{n-1} \leq 2\pi-\theta+ \delta.
\end{equation}
As in \cite{LS}, we use the Cheng-Yau gradient estimate \cite{CY} for the holomorphic functions $z_i, i=1,\cdots,n,$
which on $\{ \sum |w_i|^2<1\}$ gives
\begin{equation}\label{cyg}
\omega_{\C^n}\leq Cr^{-2}\omega_j.
\end{equation}
Using \eqref{cyg} together with the Bishop volume comparison inequality $\mathrm{Vol}_{\omega_j}B_j(x,10r)\leq Cr^{2n}$ (for all $0<r\leq 1$), we can bound
\[
\int_{ \sum |w_i|^2<1} \omega_j\wedge \omega_{\C^n}^{n-1} \leq C\int_{B_j(x,10r)} r^{2-2n} \omega_j^n \leq Cr^2,
\]
Thus, to establish \eqref{g1}, it suffices to show that as $r\to 0$ and $j\to \infty$ fast enough (depending on $r$),
\begin{equation}\label{Lelongnumberlimit}
\frac{1}{\omega_{2n-2}(n-1)!}\lim  \int_{ \sum |w_i|^2<1 } \Ric(\omega_j)\wedge \omega_{\C^n}^{n-1}= 2\pi- \theta.
\end{equation}

We know $r^{-2}g_j$ converge in the pointed Gromov-Hausdorff sense to $\mathbb{C}^{n-1}\times \mathbb{C}_\theta$, and the coordinates $w_i$ converge to $z_i$.
Write $s_j= dw_1\wedge \cdots \wedge dw_n$ and $s= dz_1\wedge\cdots\wedge dz_n$, so
\[
\Ric(\omega_j)= \sqrt{-1}\partial \bar{\partial} \log |s_j|_{g_j}^2,\quad  \Ric(g_\infty)= \sqrt{-1}\partial \bar{\partial} \log |s|_{g_\infty}^2.
\]
From the proof of \cite[Prop 3.3]{LS},
\[
\lim \int_{ \sum |w_i|^2<10 } | \log |s_j|_{g_j}- \log |s|_{g_\infty} | (r^{-2}\omega_j)^n =0.
\]
Using again \eqref{cyg},
\[
\lim \int_{ \sum |w_i|^2<10 } | \log |s_j|_{g_j}- \log |s|_{g_\infty} | \omega_{\C^n}^n =0.
\]
Given any smooth cutoff function $h(z_1,\dots, z_n)$, we can regard it as a function of $w_1,\dots, w_n$. Thus
\[
\begin{split}
&\frac{1}{\omega_{2n-2}(n-1)!}\lim  \int_{ \sum |w_i|^2<1 } h \Ric(\omega_j)\wedge \omega_{\C^n}^{n-1}\\
=
&
\frac{1}{\omega_{2n-2}(n-1)!} \lim \int_{ \sum |w_i|^2<1 } \log |s_j|^2_{g_j} \sqrt{-1}\partial \bar{\partial} h \wedge \omega_{\C^n}^{n-1}
\\
=&
\frac{1}{\omega_{2n-2}(n-1)!} \int_{ \sum |w_i|^2<1 } \log |s|^2_{g_\infty} \sqrt{-1}\partial \bar{\partial} h \wedge \omega_{\C^n}^{n-1}
\\
=& \frac{1}{\omega_{2n-2}(n-1)!} \int_{ \sum |w_i|^2<1 }h \sqrt{-1}\partial \bar{\partial} \log |s|^2_{g_\infty}   \wedge \omega_{\C^n}^{n-1}
\\
=& \frac{2\pi-\theta}{\omega_{2n-2}(n-1)!} \int_{ \sum |w_i|^2<1, w_n=0 }h    \omega_{\C^n}^{n-1}
\end{split}
\]
We let $h$ approach the characteristic function on $\{ \sum |w_i|^2<1  \}$ to obtain (\ref{Lelongnumberlimit}) as required.	\end{proof}

\begin{proof}[Proof of Theorem \ref{cy_smoothbase}]
For each fixed $i'$, let $h_\ve\geq 0$ be a family of smooth cutoff functions supported in $B_{g_N}(D_{i'},2\ve)$ and $h_\ve\equiv 1$ on $B_{g_N}(D_{i'},\ve)$, and applying Proposition \ref{estimatiomanifesta} gives
\begin{equation}\label{e1a}
\begin{split}
\int_{D_{i'}} (2\pi-\theta(x))d\mathcal{H}^{2n-2}&\leq \int_\Sigma h_\ve  (2\pi-\theta(x))d\mathcal{H}^{2n-2}\leq
 C_n
\sum_i \nu(\Ric,D_i)\int_{D_i}h_\ve\omega_{\rm can}^{n-1}\\
&\leq  C_n
\sum_i \nu(\Ric,D_i)\int_{B_{g_N}(D_{i'},2\ve)\cap D_i}\omega_{\rm can}^{n-1},
\end{split}
\end{equation}
and since the RHS converges to $C_n\nu(\Ric,D_{i'})\int_{D_{i'}}\omega_{\rm can}^{n-1}$ as $\ve\to 0$, this gives
\begin{equation}\label{e1}
\int_{D_{i'}} (2\pi-\theta(x))d\mathcal{H}^{2n-2}\leq C_n\nu(\Ric,D_{i'})\int_{D_{i'}}\omega_{\rm can}^{n-1},
\end{equation}
for all $i'$.
But recall that
$$\Sigma=\left(\bigcup_{i'} D_{i'} \cup \mathcal{S}_{\geq 2}\right)\backslash \mathcal{S}^{2n-3}.$$
Let $D^\circ_{i'}$ be points $x$ of the irreducible component $D_{i'}$ where $\nu(\Ric,x)=\nu(\Ric,D_{i'})$, so by Siu \cite{Siu} we know that $D_{i'}\backslash D^\circ_{i'}$  is an at most countable union of closed irreducible analytic subvarieties of $N$ of complex codimension at least $2$. Lemma \ref{vanish} shows that $\mathcal{H}^{2n-2}(\mathcal{S}_{\geq 2})=\mathcal{H}^{2n-2}(D_{i'}\backslash D^\circ_{i'})=0$. Thus
\begin{equation}\label{e2}
\mathcal{H}^{2n-2}(\Sigma)\leq\sum_{i'}\mathcal{H}^{2n-2}(D^\circ_{i'}),
\end{equation}
and
\begin{equation}\label{e2b}
\int_{D_{i'}}(2\pi-\theta(x))d\mathcal{H}^{2n-2}=\int_{D_{i'}^\circ}(2\pi-\theta(x))d\mathcal{H}^{2n-2}.
\end{equation}
On the other hand, Proposition \ref{herrlelong} gives for each $i'$
\begin{equation}\label{e3}
\int_{D^\circ_{i'}}(2\pi-\theta(x))d\mathcal{H}^{2n-2}\geq \nu(\Ric,D_{i'})\mathcal{H}^{2n-2}(D^\circ_{i'}),
\end{equation}
and combining \eqref{e1}, \eqref{e2b} and \eqref{e3} we deduce that for each $i'$
\[
\mathcal{H}^{2n-2}(D^\circ_{i'})\leq C_n\int_{D_{i'}}\omega_{\rm can}^{n-1}=C_n\int_{D_{i'}}\omega_{N}^{n-1},
\]
and with \eqref{e2} we finally deduce that
\[
\mathcal{H}^{2n-2}(\Sigma) \leq C_n\sum_{i'} \int_{D_{i'}}\omega_N^{n-1}\leq C_n\int_{D} \omega_N^{n-1},
\]
where $D$ is regarded as a reduced divisor.
\end{proof}

\section{Collapsing and the canonical bundle formula}\label{sectbir}
\subsection{Volume form asymptotics}
We now discuss the estimate \eqref{desired}. We again work in the unified setting \eqref{ma0},
where $\lambda=0$ in the Calabi-Yau setup and $\lambda=1$ in the K\"ahler-Ricci flow setup.

It was shown in \cite{GTZ3} that estimate \eqref{desired} holds if $N$ is smooth and $D$ is a simple normal crossings divisor.
We thus assume that this is not the case, and let $\pi:\ti{N}\to N$ be a sequence of blowups with smooth centers such that $\ti{N}$ is smooth and $E=\pi^{-1}(D)$ is a divisor with simple normal crossings. Following the construction in the proof of \cite[Theorem 2.3]{GTZ3}, we consider a resolution of singularities $\ti{M}\to M\times_N\ti{N}$ (birational onto the main component of the target space) and obtain the commutative diagram
\[
\xymatrix@C=30pt
{  \ti{M}\ar[dr]_{\ti{f}}\ar[r]\ar@/^1pc/[rr]^{p} & M\times_N \ti{N}
\ar[r]\ar[d]&M\ar[d]^{f}\\
&\ti{N}\ar[r]_{\pi}&N}
\]
where $\ti{M}^m$ is smooth. Since $M$ is also smooth, we can write $K_{\ti{M}}\sim p^*K_M+\ti{D}$ where $\ti{D}$ is an effective $p$-exceptional divisor, which  can be assumed to have simple normal crossings support.
The volume form $\ti{\mathcal{M}}:=p^*\mathcal{M}$ on $\ti{M}$ is smooth and in general has zeros along $\ti{D}$.
If we define $\ti{\vp}=\pi^*\vp\in C^0(\ti{N})\cap C^\infty(\ti{N}\backslash E)$, then on $\ti{N}\backslash E$ we have
$$\pi^*\omega_{\rm can}^n=e^{\lambda\ti{\vp}} \ti{f}_*(\ti{\mathcal{M}})=\pi^*\left(e^{\lambda\vp}f_*(\mathcal{M})\right),$$
and the asymptotic behavior of the volume form $\pi^*\omega_{\rm can}^n$ was obtained in \cite[Theorems 2.3 and 7.1]{GTZ3} using Hodge theory (and in \cite{Ki} with a different method, which also extends to the case when the morphism $f$ is K\"ahler but not projective, see \cite[Rmk 1.6]{Ki}): on $\ti{N}\backslash E$ we have
\begin{equation}\label{asym}
C^{-1}\prod_{j=1}^p |s_j|_{h_j}^{2\beta_j}\omega_{\rm cone}^n\leq \pi^*\omega_{\rm can}^n\leq C\prod_{j=1}^p|s_j|_{h_j}^{2\beta_j}\left(1-\sum_{i=1}^\mu\log|s_i|_{h_i}\right)^d \omega_{\rm cone}^n,
\end{equation}
where $\beta_j\in\mathbb{Q}_{>0}$, and where $\omega_{\rm cone}$ is a K\"ahler metric with conical singularities along the $E_i$'s with cone angles $2\pi\gamma_i$, $0<\gamma_i\leq 1$, which we will take of the form $\omega_{\rm cone}=\omega_{\ti{N}}+\ddbar\eta,$ where
\begin{equation}\label{defneta}
\eta=C^{-1}\sum_i |s_i|^{2\gamma_i}_{h_i},
\end{equation}
for some $C>0$ sufficiently large. In particular we have
$$\frac{C^{-1}}{\prod_i |s_i|^{2(1-\gamma_i)}_{h_i}}\leq\frac{\omega_{\rm cone}^n}{\omega_{\ti{N}}^n}\leq \frac{C}{\prod_i |s_i|^{2(1-\gamma_i)}_{h_i}}.$$
Let us write
$$H=\prod_j |s_j|_{h_j}^{2\beta_j},$$
and define a smooth function $\psi$ on $\ti{N}\backslash E$ by
\begin{equation}\label{psi}
\psi=\frac{\pi^*\omega_{\rm can}^n\prod_i |s_i|^{2(1-\gamma_i)}_{h_i}}{e^{\lambda\ti{\vp}}H\omega_{\ti{N}}^n},
\end{equation}
which depends on the choice of Hermitian metrics $h_i$, and which by \eqref{asym} and the boundedness of $\ti{\vp}$ satisfies
\begin{equation}\label{ss}
C^{-1}\leq \psi\leq C\left(1-\sum_{i=1}^\mu\log|s_i|_{h_i}\right)^d.
\end{equation}

\subsection{The canonical bundle formula}
The exponents $\beta_j,\gamma_i$ in \eqref{asym} can be determined by applying the canonical bundle formula in birational geometry \cite{Am,FL,FM,Ka,Ki,Kol,FL} to the map $\ti{f}$. Following the notation in \cite{Ki}, we define divisors $\ti{R}=-\ti{D}$ on $\ti{M}$ and $\mathfrak{M}=-K_{\ti{N}}$ on $\ti{N}$, so that we have the equality as $\mathbb{Q}$-divisors
$$K_{\ti{M}}+\ti{R}=\ti{f}^*(K_{\ti{N}}+\mathfrak{M}).$$
We also define $\ti{B}=\pi^{-1}(D)\subset\ti{N}$, and note that $\ti{R}+\ti{f}^*\ti{B}$ has snc support, and $\ti{f}(\mathrm{Supp}\ti{R})\subset\ti{B}$ (so in particular $\ti{R}$ is vertical, with the terminology of \cite{Ki}). It then follows that $\ti{f}$ satisfies the conditions in \cite[Definition 4.3]{Ki}, and thus thanks to \cite[(16)]{Ki}, \cite[Theorem 8.3.7]{Kol} there is a well-defined $\mathbb{Q}$-divisor $\ti{B}_{\ti{R}}$ on $\ti{N}$ supported on $\ti{B}$, the {\em boundary part of the canonical bundle formula} for $\ti{f}$, which satisfies
$$\ti{R}+\ti{f}^*(\ti{B}-\ti{B}_{\ti{R}})\leq \mathrm{red}(\ti{f}^*\ti{B}),$$
and is the smallest such divisor. Writing
\begin{equation}\label{coeffs}
\ti{B}_{\ti{R}}=\sum_i a_i \ti{B}_i,
\end{equation}
it follows from \cite[(16)]{Ki} that $a_i\in (-\infty,1)$.

Then $\ti{\mathcal{M}}$ is a volume form on $\ti{M}$ with ``poles along $R$'' in the terminology of \cite{Ki} (i.e. zeros along $\ti{D}=-R$), so \cite[Corollary 1.3]{Ki} applies (beware that there is a typo in \cite[(4)]{Ki}, and the exponents $a_i$ there should be replaced by $-a_i$) and shows that
$$\pi^*\omega_{\rm can}^n=e^{\lambda\ti{\vp}} \ti{f}_*(\ti{\mathcal{M}}),$$
on $\ti{N}\backslash E$ satisfies
$$C^{-1}\prod_i |s_i|^{-2a_i}_{h_i}\psi\leq \frac{\pi^*\omega_{\rm can}^n}{\omega_{\ti{N}}^n}\leq C\prod_i |s_i|^{-2a_i}_{h_i}\psi,$$
(using again the boundedness of $\ti{\vp}$) where $s_i$ is a defining section of $\mathcal{O}(\ti{B}_i)$, the coefficients $a_i$ are given by \eqref{coeffs} and $\psi$ is as in \eqref{ss}. Comparing this with \eqref{asym} shows that the exponents $\beta_j$ in \eqref{asym} are just equal to $-a_i$ for those $a_i<0$, and the exponents $(1-\gamma_i)$ in \eqref{asym} are equal to $a_i$ for those $a_i>0$.

Given thus Hermitian metrics $h_i$ on $\mathcal{O}(\ti{B}_i)$ (which we will choose precisely later), we define $\psi$ as in \eqref{psi} by
$$\psi=\frac{\pi^*\omega_{\rm can}^n\prod_i |s_i|^{2a_i}_{h_i}}{e^{\lambda\ti{\vp}}\omega_{\ti{N}}^n},$$
so that on $\ti{N}\backslash E$
\begin{equation}\label{ddb}
\begin{split}
\ddbar\log(1/\psi)&=\Ric(\pi^*\omega_{\rm can})-\Ric(\omega_{\ti{N}})+\sum_i a_i R_{h_i}+\lambda\pi^*\ddbar\vp\\
&=\pi^*\omega_{\rm WP}-\lambda\pi^*\omega_{\rm can}-\Ric(\omega_{\ti{N}})+\sum_i a_i R_{h_i}+\lambda\pi^*\ddbar\vp\\
&=\pi^*\omega_{\rm WP}-\lambda\pi^*\omega_N-\Ric(\omega_{\ti{N}})+\sum_i a_i R_{h_i}\\
&\geq -\lambda\pi^*\omega_N-\Ric(\omega_{\ti{N}})+\sum_i a_i R_{h_i},
\end{split}
\end{equation}
since $\omega_{\rm WP}\geq 0$ on $N^\circ$. Observe that all terms on the last line of \eqref{ddb} are smooth forms on all of $\ti{N}$, and
the term $\sum_i a_i R_{h_i}$ is cohomologous to $\ti{B}_{\ti{R}}$.

\subsection{Vanishing orders}
In this section we will use repeatedly the notion of a K\"ahler metric $\omega_N$ on a singular (reduced, irreducible) compact complex analytic space $N$, as in \cite{Moi}, see also \cite[Chapter XII.3]{BM}. This has the property that if $\pi:\ti{N}\to N$ is a resolution of singularities then $\pi^*\omega_N$ is a smooth semipositive $(1,1)$-form on $\ti{N}$. Furthermore, the resolution $\ti{N}$ can be chosen to be a K\"ahler manifold and if $\omega_{\ti{N}}$ is any fixed K\"ahler metric on $\ti{N}$ then
$$\frac{\pi^*\omega_N^n}{\omega_{\ti{N}}^n}$$
is a smooth semipositive function on $\ti{N}$ which vanishes precisely along the exceptional locus $\mathrm{Exc}(\pi)$. We may assume without loss that $\mathrm{Exc}(\pi)=\bigcup_k F_k$ is a simple normal crossings divisor, and we can find real numbers $b_k\in\mathbb{R}_{>0}$ such that the ratio
$$\frac{\pi^*\omega_N^n}{\prod_k |s_{F_k}|^{2b_k}_{h_{F_k}}\omega_{\ti{N}}^n}$$
is a smooth strictly positive function on $\ti{N}$ (for any smooth Hermitian metrics $h_{F_k}$ on $\mathcal{O}(F_k)$). By Yau's Theorem \cite{Ya}
we can pick our reference K\"ahler metric $\omega_{\ti{N}}$ such that we have
\begin{equation}\label{grz}
\frac{\pi^*\omega_N^n}{\omega_{\ti{N}}^n}=\prod_k |s_{F_k}|^{2b_k}_{h_{F_k}}.
\end{equation}
Observe that the coefficients $b_k$ are unchanged if we replace $\omega_N$ by another K\"ahler metric on $N$, since the pullbacks of these metrics to $\ti{N}$ are uniformly equivalent: indeed, given two K\"ahler metrics $\omega_N,\omega'_N$ on $N$, given any $x\in N$ we can find an open set $U\ni x$ in $N$ and embeddings $\iota:U\hookrightarrow \mathbb{C}^N, \iota':U\hookrightarrow \mathbb{C}^{N'}$ and smooth strictly psh functions $\vp,\vp'$ defined in some neighborhoods of the images $\iota(U),\iota'(U')$ such that $\omega_N|_U=\iota^*\ddbar\vp,\omega'_N|_U=\iota'^*\ddbar\vp'$. Then \cite[Lemma XI.1.3.2]{BM} shows that, up to shrinking our neighborhoods, we may assume that the embeddings $\iota$ and $\iota'$ are equal, and then it is clear that $\ddbar\vp$ and $\ddbar\vp'$ are locally uniformly equivalent, and pulling back via $\iota$ and $\pi$ shows that $\pi^*\omega_N$ and $\pi^*\omega'_N$ are uniformly equivalent, proving the claim. We can thus define a $\pi$-exceptional $\mathbb{R}$-divisor on $\ti{N}$
$$\mathcal{I}_{\ti{N}/N}=\sum_k b_k F_k,$$
which does not depend on the choice of $\omega_N$. When $N$ is smooth, we have that $\mathcal{I}_{\ti{N}/N}=K_{\ti{N}/N}$, but this equality does not hold in general (say when $N$ is $\mathbb{Q}$-Gorenstein so that $K_N$ is $\mathbb{Q}$-Cartier) since the discrepancies of $\pi$ can be negative while the vanishing orders $b_k$ are always positive).

If $$\hat{N}\overset{\hat{\pi}}{\to}\ti{N}\overset{\pi}{\to}N$$
is a higher model, then fixing a K\"ahler metric $\omega_{\hat{N}}$ on $\hat{N}$
we have
$$\frac{\hat{\pi}^*\pi^*\omega_N^n}{\omega_{\hat{N}}^n}=\hat{\pi}^*\left(\frac{\pi^*\omega_N^n}{\omega_{\ti{N}}^n}\right)\frac{\hat{\pi}^*\omega_{\ti{N}}^n}{\omega_{\hat{N}}^n},$$
and so
\begin{equation}\label{key}
\mathcal{I}_{\hat{N}/N}=\hat{\pi}^*\mathcal{I}_{\ti{N}/N}+K_{\hat{N}/\ti{N}}.
\end{equation}

\subsection{A functorial divisor}
We then define a $\mathbb{Q}$-divisor on $\ti{N}$ by
$$\Xi_{\ti{N}}:=\ti{B}_{\ti{R}}+\mathcal{I}_{\ti{N}/N}.$$

If we are now given a higher model $\hat{\pi}:\hat{N}\to\ti{N}$, and construct $\hat{f}:\hat{M}\to\hat{N}$ as above, then we have (see e.g. \cite[Lemma 4.10]{Ki})
\begin{equation}\label{key2}
\hat{B}_{\hat{R}}=\hat{\pi}^*\ti{B}_{\ti{R}}-K_{\hat{N}/\ti{N}},
\end{equation}
and combining \eqref{key} and \eqref{key2} we obtain the functorial relation
\begin{equation}\label{key3}
\Xi_{\hat{N}}=\hat{B}_{\hat{R}}+\mathcal{I}_{\hat{N}/N}=\hat{\pi}^*\ti{B}_{\ti{R}}-K_{\hat{N}/\ti{N}}+\hat{\pi}^*\mathcal{I}_{\ti{N}/N}+K_{\hat{N}/\ti{N}}=\hat{\pi}^*\Xi_{\ti{N}}.
\end{equation}

\subsection{Collapsing}
Having introduced the divisor $\Xi_{\ti{N}}$, we now come to the proof of Theorem \ref{cy_canonical}, which we restate here:

\begin{thm}
Suppose that there is a resolution $\pi:\ti{N}\to N$ as above such that $\Xi_{\ti{N}}$ is $\pi$-ample. Then the conjectured estimate \eqref{desired} holds on $\ti{N}\backslash E$.
\end{thm}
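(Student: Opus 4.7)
My plan is to adapt the Schwarz-lemma / maximum-principle argument of \cite{GTZ3} to the current singular-base setting, with the $\pi$-ampleness of $\Xi_{\ti N}$ furnishing the curvature positivity which is automatic when $N$ is smooth and $D$ is snc but which breaks down in the general setting. The argument will have three parts: first, sharpen the $\ddbar$-inequality \eqref{ddb} using $\pi$-ampleness; second, run a maximum-principle argument for a trace-type quantity on $\ti N\setminus E$; and third, diagonalize positive forms to conclude the form-level estimate \eqref{desired}.

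For the first part, I would fix a K\"ahler metric $\omega_{\ti N}$ on $\ti N$ as in \eqref{grz}. This pins down a Hermitian metric on $\mathcal{O}(\mathcal{I}_{\ti N/N})$ whose curvature satisfies $R_{\mathcal{I}_{\ti N/N}}=\pi^*\Ric(\omega_N)-\Ric(\omega_{\ti N})$. Since $L$ with $c_1(L)=[\omega_N]$ is ample on $N$, the $\pi$-ampleness of $\Xi_{\ti N}$ gives that $\Xi_{\ti N}+k\pi^*L$ is ample on $\ti N$ for some large integer $k$, and hence we may choose smooth Hermitian metrics $h_i$ on $\mathcal{O}(\ti B_i)$ such that
$$R_{\mathcal{I}_{\ti N/N}}+\sum_i a_i R_{h_i}+k\pi^*\omega_N \geq \epsilon\,\omega_{\ti N},$$
for some $\epsilon>0$. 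Substituting this and the formula for $R_{\mathcal{I}_{\ti N/N}}$ into \eqref{ddb} upgrades that inequality to
$$\ddbar \log(1/\psi) \geq \epsilon\,\omega_{\ti N} - C\pi^*\omega_N$$
on $\ti N\setminus E$, so that $\log\psi$ is strictly pluri-superharmonic modulo a pullback from the base.

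For the second part, I would consider the auxiliary function $Q=\log\operatorname{tr}_{\omega_{\rm cone}}\pi^*\omega_{\rm can}+A\log\psi-B\eta$ on $\ti N\setminus E$ for suitable large constants $A,B$ and $\eta$ as in \eqref{defneta}. Since $\Ric(\pi^*\omega_{\rm can})=-\lambda\pi^*\omega_{\rm can}+\pi^*\omega_{\rm WP}\geq -\lambda\pi^*\omega_{\rm can}$ from \eqref{ma0}, and the conic K\"ahler metric $\omega_{\rm cone}$ has bisectional curvature bounded above on $\ti N\setminus E$ (its cone angles being $\leq 2\pi$), an Aubin-Yau-Calabi $C^2$-estimate adapted to the conic setting will produce a differential inequality for $Q$ in which the $\epsilon\,\omega_{\ti N}$-positivity from part one absorbs the unwanted cross-terms. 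By \eqref{ss}, $\log\psi$ grows at most like $\log(-\log d_{g_{\ti N}}(\cdot,E))$ and $\eta$ is bounded, so after an infinitesimal barrier correction any supremum of $Q$ is attained at an interior point of $\ti N\setminus E$, and the maximum principle gives the polylogarithmic trace bound
$$\operatorname{tr}_{\omega_{\rm cone}}\pi^*\omega_{\rm can}\leq C\Bigl(1-\sum_i\log|s_i|_{h_i}\Bigr)^{C'}.$$
Since the trace controls the maximum eigenvalue of $\pi^*\omega_{\rm can}$ relative to $\omega_{\rm cone}$, one concludes $\pi^*\omega_{\rm can}\leq C(1-\sum_i\log|s_i|_{h_i})^{C'}\omega_{\rm cone}$ pointwise on $\ti N\setminus E$, which is \eqref{desired}.

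The main obstacle lies in the second step: a direct Chern-Lu inequality applied to the identity map $(\omega_{\rm cone})\to (\pi^*\omega_{\rm can})$ would demand a bisectional upper bound on the degenerate target metric $\pi^*\omega_{\rm can}$, which is not available since $\pi^*\omega_{\rm can}$ vanishes on $E$. The remedy is to exploit the Monge-Amp\`ere structure of $\pi^*\omega_{\rm can}$ from \eqref{ma0} to express $\Ric(\pi^*\omega_{\rm can})$ in terms of $\pi^*\omega_{\rm can}$ itself, so that an Aubin-Yau-Calabi inequality for $\log\operatorname{tr}_{\omega_{\rm cone}}\pi^*\omega_{\rm can}$ yields exactly the cross-term that the strict $\epsilon\,\omega_{\ti N}$-positivity from part one is designed to absorb. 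The $\pi$-ampleness of $\Xi_{\ti N}$ is precisely what supplies this positivity and thereby closes the maximum-principle argument.
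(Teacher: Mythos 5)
Your overall strategy -- convert $\pi$-ampleness of $\Xi_{\ti N}$ into strict positivity of a curvature form of the kind $A_0\pi^*\omega_N+\sum_k b_k R_{F_k}+\sum_i a_i R_{h_i}$, and then use this to close a Chern-Lu / Schwarz-type maximum-principle argument for $\operatorname{tr}_{\omega_{\rm cone}}\pi^*\omega_{\rm can}$ -- is indeed the same idea as the paper's. The identification $R_{\mathcal I_{\ti N/N}}=\pi^*\Ric(\omega_N)-\Ric(\omega_{\ti N})$ and the use of \eqref{ddb} are also in line with the paper. However, there is a genuine gap in your second step.

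You propose to run the maximum principle directly for the quantity $Q=\log\operatorname{tr}_{\omega_{\rm cone}}\pi^*\omega_{\rm can}+A\log\psi-B\eta$ on $\ti N\setminus E$, asserting that ``after an infinitesimal barrier correction any supremum of $Q$ is attained at an interior point.'' This step fails as stated. The form $\pi^*\omega_{\rm can}$ is only a smooth metric on $\ti N\setminus E$ and degenerates along $E$; a priori there is no information on how fast $\operatorname{tr}_{\omega_{\rm cone}}\pi^*\omega_{\rm can}$ may blow up as one approaches $E$ -- proving that it blows up at most polylogarithmically is precisely the content of the theorem. A barrier term of the form $\ve\log|s_E|^2$ only pushes the maximum into the interior if one already knows $\log\operatorname{tr}_{\omega_{\rm cone}}\pi^*\omega_{\rm can}$ grows slower than $-\ve\log|s_E|^2$; without an a priori bound the maximum of $Q$ may simply fail to exist on $\ti N\setminus E$. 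The paper avoids exactly this circularity by inserting an intermediate regularization: Demailly-regularize $\log(1/\psi)$ to smooth $u_j$, solve a family of auxiliary Monge--Amp\`ere equations \eqref{ma2} to get K\"ahler metrics $\omega_j=\pi^*\omega_N+\tfrac1j\omega_{\ti N}+\ddbar\vp_j$ on $\ti N\setminus E$ approximating $\pi^*\omega_{\rm can}$, and then crucially invoke \cite[Proposition 5.1]{GTZ3}, which gives the a priori finiteness $\operatorname{tr}_{\omega_{\rm cone}}\omega_j\leq C_j$ (with $C_j$ depending on $j$ but finite). With this a priori bound in hand, adding the barrier $\ve\log|s_E|^2$ genuinely localizes the maximum of the corresponding $Q$ in $\ti N\setminus E$, the $j$-uniform estimate $\operatorname{tr}_{\omega_{\rm cone}}\omega_j\leq C e^{-A u_j}$ is obtained, and \eqref{desired} follows by letting $j\to\infty$. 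Your proposal omits this regularization entirely; to repair it you would need to prove some a priori upper bound on $\operatorname{tr}_{\omega_{\rm cone}}\pi^*\omega_{\rm can}$, or reintroduce the approximation scheme.

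Two secondary remarks. First, your test function carries the wrong sign: since $\psi\geq C^{-1}$ and the target estimate is $\operatorname{tr}_{\omega_{\rm cone}}\pi^*\omega_{\rm can}\leq C\psi^A$, you should work with $\log\operatorname{tr}_{\omega_{\rm cone}}\pi^*\omega_{\rm can}-A\log\psi$ (equivalently, add $+A u_j$ where $u_j\approx\log(1/\psi)$, as in the paper), not $+A\log\psi$. Second, invoking an ample line bundle $L$ with $c_1(L)=[\omega_N]$ is an unnecessary assumption here; Theorem \ref{cy_canonical} does not assume $[\omega_N]\in H^2(N,\mathbb Q)$, and the paper simply uses $\pi$-ampleness together with the smooth positive form $\pi^*\omega_N$, choosing the metrics $h_{F_k},h_i$ so that $A_0\pi^*\omega_N+\sum_k b_k R_{F_k}+\sum_i a_i R_{h_i}$ is K\"ahler on $\ti N$.
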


\begin{proof}
We start the proof by using the method of \cite{GTZ3}.
From \eqref{ss}, $\log(1/\psi)$ is bounded above near $E$, so by Grauert-Remmert \cite{GR} it extends to a global quasi-psh function on $\ti{N}$ which satisfies \eqref{ddb} in the weak sense. Thanks to \eqref{ss}, the extension has vanishing Lelong numbers, so we can approximate it using Demailly's regularization theorem \cite{De} by a decreasing sequence of smooth functions $u_j$ with arbitrarily small loss of positivity, i.e.
\begin{equation}\label{zwei}
\ddbar u_j\geq  -\lambda\pi^*\omega_N-\Ric(\omega_{\ti{N}})+\sum_i a_i R_{h_i}-\frac{1}{j}\omega_{\ti{N}},
\end{equation}
on all of $\ti{N}$.
We use this to obtain a partial regularization of $\pi^*\omega_{\rm can}$, which we denote by $\omega_{j}=\pi^*\omega_N+\frac{1}{j}\omega_{\ti{N}}+\ddbar\vp_{j}$. These are K\"ahler metrics on $\ti{N}\backslash E$ solving
\begin{equation}\label{ma2}
\omega_{j}^n=c_{j} e^{\lambda\vp_j-u_j}\frac{\omega_{\ti{N}}^n}{\prod_i |s_i|^{2a_i}},
\end{equation}
with the normalization $\sup_{\ti{N}}\vp_j=0$ if $\lambda=0$, and where $c_j=1$ for $\lambda=1$, while for $\lambda=0$ the constant $c_j$ is defined by integrating the equation, and satisfies $c_j\to 1$ as $j\to\infty$. This equation is solved via a standard approximation procedure (see e.g. \cite[\S 5]{GTZ3}), and we obtain $\vp_j$ which is smooth on $\ti{N}\backslash E$ and continuous on $\ti{N}$, and as in Section \ref{sect2} we have the properties that
$\omega_j\to \pi^*\omega_{\rm can}$ locally smoothly on $\ti{N}\backslash E$, and $$\sup_{\ti{N}}|\vp_j|\leq C,$$
and $\|\vp_j-\pi^*\vp\|_{L^1(\ti{N},\omega_{\ti{N}}^n)}\to 0.$
Crucially, it is also shown in \cite[Proposition 5.1]{GTZ3} that for each $j$ there is a constant $C_j$ such that on $\ti{N}\backslash E$ we have
\begin{equation}\label{lame}
\tr{\omega_{\rm cone}}{\omega_{j}}\leq C_j,
\end{equation}
so these partial regularizations are not worse than conical (the proof in \cite[Proposition 5.1]{GTZ3} is written with $\lambda=0$, but it extends immediately to the case $\lambda=1$). Also, differentiating \eqref{ma2} and using \eqref{zwei}, we see that on $\ti{N}\backslash E$ we have
\begin{equation}\label{semipos}\begin{split}
\Ric(\omega_j)&=-\lambda\ddbar\vp_j+\ddbar u_j+\Ric(\omega_{\ti{N}})-\sum_i a_iR_{h_i}\\
&\geq-\lambda\omega_j +\lambda\pi^*\omega_N+\frac{\lambda}{j}\omega_{\ti{N}} -\lambda\pi^*\omega_N-\frac{1}{j}\omega_{\ti{N}}\geq -\lambda\omega_j-\frac{C}{j}\omega_{\rm cone}.
\end{split}\end{equation}
Our goal is then to show there are $C,A>0$ such that on $\ti{N}\backslash E$ we have
\begin{equation}\label{est3}
\tr{\omega_{\rm cone}}{\omega_{j}}\leq Ce^{-Au_j},
\end{equation}
holds for all $j$ sufficiently large, since then passing to the limit in $j$ this gives
$$\tr{\omega_{\rm cone}}{\pi^*\omega_{\rm can}}\leq C\psi^A,$$
on $\ti{N}\backslash E$, which is our desired estimate \eqref{desired}.

First, following \cite{GP} we define $\Psi=C\sum_i |s_i|_{h_i}^{2\rho},$ for some small $\rho>0$ and large $C>0$, which can be chosen so that on $\ti{N}\backslash E$ the curvature of $\omega_{\rm cone}$ satisfies
\begin{equation}\label{curv}
{\rm Rm}(\omega_{\rm cone})\geq -(C\omega_{\rm cone}+\ddbar\Psi)\otimes \mathrm{Id},
\end{equation}
see \cite[(4.3)]{GP}.

To prove \eqref{est3} we apply the maximum principle to
$$Q=\log\tr{\omega_{\rm cone}}{\omega_j}+n\Psi+Au_j-A^2(\vp_j-\eta/j)+Ab\eta+\ve \log |s_E|^2,$$
where $A$ is large (to be determined), $b>0$ is small and $0<\ve\leq\frac{1}{j}$, $\eta$ was defined in \eqref{defneta}, and $j$ will be taken larger than $A$ (once the value of $A$ is fixed). The terms
$n\Psi+Au_j-A^2(\vp_j-\eta/j)+Ab\eta$ are all bounded on $\ti{N}$ (with bounds independent of $j$ except for $u_j$), while the term $\log\tr{\omega_{\rm cone}}{\omega_j}$ is bounded above on $\ti{N}\backslash E$ (depending on $j$) by \eqref{lame}. Since the term $\ve \log |s_E|^2$ goes to $-\infty$ on $E$, the quantity $Q$ achieves a global maximum on $\ti{N}\backslash E$. All the following computations are at an arbitrary point of $\ti{N}\backslash E$.

First, from \cite[(5.17)]{GTZ3} we have
$$\Delta_{\omega_j}(\log\tr{\omega_{\rm cone}}{\omega_j}+n\Psi)\geq -C\tr{\omega_j}{\omega_{\rm cone}}-\frac{\tr{\omega_{\rm cone}}{\Ric(\omega_j)}}{\tr{\omega_{\rm cone}}{\omega_j}},$$
while differentiating \eqref{ma2} gives
\[\begin{split}
\Delta_{\omega_j}u_j&=\lambda\Delta_{\omega_j}\vp_j+\tr{\omega_j}{\Ric(\omega_j)}-\tr{\omega_j}{\Ric(\omega_{\ti{N}})}+\tr{\omega_j}\left({\sum_i a_i R_{h_i}}\right)\\
&=\lambda n-\lambda\tr{\omega_j}{\pi^*\omega_N}-\frac{\lambda}{j}\tr{\omega_j}{\omega_{\ti{N}}} +\tr{\omega_j}{\Ric(\omega_j)}-\tr{\omega_j}{\Ric(\omega_{\ti{N}})}+\tr{\omega_j}\left({\sum_i a_i R_{h_i}}\right)\\
&\geq\lambda n-\lambda\tr{\omega_j}{\pi^*\omega_N}-\frac{C}{j}\tr{\omega_j}{\omega_{\rm cone}} +\tr{\omega_j}{\Ric(\omega_j)}-\tr{\omega_j}{\Ric(\omega_{\ti{N}})}+\tr{\omega_j}\left({\sum_i a_i R_{h_i}}\right),
\end{split}\]
and as in \cite[(5.20)]{GTZ3} we observe that
\[\begin{split}
&-\frac{\tr{\omega_{\rm cone}}{\Ric(\omega_j)}}{\tr{\omega_{\rm cone}}{\omega_j}}+\tr{\omega_j}{\Ric(\omega_j)}\\
&=-\frac{\tr{\omega_{\rm cone}}{(\Ric(\omega_j)+\frac{C}{j}\omega_{\rm cone}+\lambda\omega_j)}}{\tr{\omega_{\rm cone}}{\omega_j}}+\tr{\omega_j}{\left(\Ric(\omega_j)+\frac{C}{j}\omega_{\rm cone}+\lambda\omega_j\right)}\\
&+\frac{\tr{\omega_{\rm cone}}{(\frac{C}{j}\omega_{\rm cone}+\lambda\omega_j)}}{\tr{\omega_{\rm cone}}{\omega_j}}
-\frac{C}{j}\tr{\omega_j}{\omega_{\rm cone}}-\lambda n\\
&\geq -\frac{C}{j}\tr{\omega_j}{\omega_{\rm cone}}-\lambda n,
\end{split}\]
using that $\Ric(\omega_j)+\frac{C}{j}\omega_{\rm cone}+\lambda\omega_j\geq 0$ by \eqref{semipos}, so the quantity in the second line is nonnegative. Therefore, using again \eqref{semipos},
\begin{equation}\label{c2}\begin{split}
&\Delta_{\omega_j}(\log\tr{\omega_{\rm cone}}{\omega_j}+n\Psi+Au_j)\\
&\geq-\left(C+\frac{C}{j}\right)\tr{\omega_j}{\omega_{\rm cone}}-\lambda n
+A\lambda n-A\lambda\tr{\omega_j}{\pi^*\omega_N}-\frac{CA}{j}\tr{\omega_j}{\omega_{\rm cone}}\\
&+(A-1)\tr{\omega_j}{\Ric(\omega_j)}
-A\tr{\omega_j}{\Ric(\omega_{\ti{N}})}+A\tr{\omega_j}{\left(\sum_i a_i R_{h_i}\right)}\\
&\geq -\left(C+\frac{CA}{j}\right)\tr{\omega_j}{\omega_{\rm cone}}-A\lambda\tr{\omega_j}{\pi^*\omega_N}
-A\tr{\omega_j}{\Ric(\omega_{\ti{N}})}+A\tr{\omega_j}{\left(\sum_i a_i R_{h_i}\right)},\end{split}
\end{equation}
and
taking $\ddbar\log$ of \eqref{grz} on $\ti{N}\backslash E$ gives
\begin{equation}\label{deriv}
\Ric(\omega_{\ti{N}})=\pi^*\Ric(\omega_N)-\sum_k b_k R_{F_k}.
\end{equation}
To bound the term $\pi^*\Ric(\omega_N)$ we use the following lemma:

\begin{lem}
There is a constant $C$ such that on $\ti{N}$ we have
\begin{equation}\label{ric}\Ric(\omega_{\ti{N}})\leq C\pi^*\omega_N-\sum_k b_k R_{F_k}.
\end{equation}
\end{lem}
\begin{proof}
In \eqref{deriv} the terms $\Ric(\omega_{\ti{N}}), \sum_k b_k R_{F_k}$ and $\pi^*\omega_N$ are smooth on all of $\ti{N}$, so it suffices to show that on $N^\circ$ we have
$$\Ric(\omega_N)\leq C\omega_N.$$
This is of course clear if $N$ is smooth, while for singular $N$ recall that by definition we can cover $N$ by open subsets $U_i$ with embeddings $U_i\hookrightarrow B\subset\mathbb{C}^N$ as analytic subsets of the unit ball in Euclidean space, and on each $U_i$ the metric $\omega_N$ equals the restriction of some K\"ahler metric on $B$. Since bisectional curvature decreases in submanifolds, on $U_i\cap N^{\rm reg}\supset U_i\cap N^\circ$ we have that the bisectional curvature of $\omega_N$ is bounded above, and hence so is its Ricci curvature.
\end{proof}

Inserting \eqref{deriv} and \eqref{ric} in \eqref{c2} then gives
\begin{equation}\label{c3}\begin{split}
\Delta_{\omega_j}(\log\tr{\omega_{\rm cone}}{\omega_j}+n\Psi+Au_j)&\geq-\left(C+\frac{CA}{j}\right)\tr{\omega_j}{\omega_{\rm cone}}-CA\tr{\omega_j}{\pi^*\omega_N}\\
&-A\lambda n+A\tr{\omega_j}{\left(\sum_k b_kR_{F_k}+\sum_i a_i R_{h_i}\right)},\end{split}
\end{equation}
where the term $\sum_k b_kR_{F_k}+\sum_i a_i R_{h_i}$ is the curvature of a Hermitian metric on our divisor $\Xi_{\ti{N}}$.

By assumption $\Xi_{\ti{N}}$ is $\pi$-ample, and so we can choose the metrics $h_{F_k},h_i$ so that
$$\hat{\omega}_{\ti{N}}:=A_0\pi^*\omega_N+\sum_k b_kR_{F_k}+\sum_i a_i R_{h_i}$$
is a K\"ahler metric on $\ti{N}$ for some (in fact all) $A_0$ sufficiently large. We also choose $A$ in the quantity $Q$ so that $A\geq 2A_0$.

Using \eqref{c3} we can then compute
\[\begin{split}
\Delta_{\omega_j}Q&\geq -\left(C+\frac{CA}{j}\right)\tr{\omega_j}{\omega_{\rm cone}}-CA\tr{\omega_j}{\pi^*\omega_N}+A\tr{\omega_j}{\left(\sum_k b_kR_{F_k}+\sum_i a_i R_{h_i}\right)}\\
&-A\lambda n+A^2\tr{\omega_j}{\left(\pi^*\omega_N+\frac{1}{j}\omega_{\rm cone}\right)}+Ab\tr{\omega_j}{\ddbar\eta}-A^2n-\ve\tr{\omega_j}{R_E}\\
&\geq -\left(C+\frac{CA}{j}\right)\tr{\omega_j}{\omega_{\rm cone}}+A\tr{\omega_j}{\left(\sum_k b_kR_{F_k}+\sum_i a_i R_{h_i}\right)}-A\lambda n\\
&+\frac{A^2}{2}\tr{\omega_j}{\pi^*\omega_N}+\frac{A^2}{j}\tr{\omega_j}{\omega_{\rm cone}}+Ab\tr{\omega_j}{\ddbar\eta}-A^2n-\frac{C}{j}\tr{\omega_j}{\omega_{\rm cone}}\\
&\geq  -C\tr{\omega_j}{\omega_{\rm cone}}+A\tr{\omega_j}{\left(A_0\pi^*\omega_N+\sum_k b_kR_{F_k}+\sum_i a_i R_{h_i}+b\ddbar\eta\right)}-A\lambda n-A^2n,
\end{split}\]
assuming without loss that $A$ is large so that $\frac{A^2}{2}\geq CA$ and also that $j\geq A$. Then we choose $b>0$ small so that
$\hat{\omega}_{\ti{N}}+b\ddbar\eta=\hat{\omega}_{\rm cone}$ is a conical K\"ahler metric with $\hat{\omega}_{\rm cone}\geq c\omega_{\rm cone}$ for some $c>0$, so that
$$A\tr{\omega_j}{\left(A_0\pi^*\omega_N+\sum_k b_kR_{F_k}+\sum_i a_i R_{h_i}+b\ddbar\eta\right)}\geq Ac\, \tr{\omega_j}{\omega_{\rm cone}},$$
 and finally we can choose $A$ sufficiently large so that
$$ Ac\, \tr{\omega_j}{\omega_{\rm cone}}\geq (C+1) \tr{\omega_j}{\omega_{\rm cone}},$$
and so we obtain
$$\Delta_{\omega_j}Q\geq \tr{\omega_j}{\omega_{\rm cone}}-C.$$
Therefore at a maximum of $Q$ (which is not on $E$) we have
$$\tr{\omega_j}{\omega_{\rm cone}}\leq C,$$
and so also
$$\tr{\omega_{\rm cone}}{\omega_j}\leq C H e^{-u_j},$$
hence
$$\log\tr{\omega_{\rm cone}}{\omega_j}+Au_j\leq C\log H+(A-1)u_j\leq C,$$
and so also $Q\leq C$, which must hold everywhere on $\ti{N}\backslash E$. The constants do not depend on $\ve$, so we can let $\ve\to 0$ and this gives
$$\tr{\omega_{\rm cone}}{\omega_j}\leq Ce^{-Au_j},$$
which is \eqref{est3}.
\end{proof}

\section{Collapsing of the K\"ahler-Ricci flow}\label{sectkrf}
In this section we give the proof of Theorem \ref{krf_gh}. The setup was described in detail in section \ref{setupkrf} in the Introduction, and we will not repeat it here.

\subsection{Review of some recent results}
We first collect some recent results from the literature that will be used in the course of our proof.

First, by \cite[Theorem 1.2]{TWY}  we have that
\begin{equation}\label{twyconv}
\omega(t)\to f^*\omega_{\rm can},
\end{equation}
in $C^0_{\rm loc}(M^\circ)$, while \cite{ST2} shows that the scalar curvature of $\omega(t)$ is uniformly bounded, i.e.
\begin{equation}\label{stscal}
\sup_M|R(\omega(t))|\leq C,
\end{equation}
for all $t\geq 0$, and also that the volume form of $\omega(t)$ satisfies
\begin{equation}\label{volscal}
C^{-1}e^{-(m-n)t}\omega_M^{m}\leq\omega(t)^{m}\leq Ce^{-(m-n)t}\omega_M^{m},
\end{equation}
on $M\times [0,\infty),$ as well as the ``parabolic Schwarz Lemma'' estimate \cite[Proposition 2.2]{ST2} (and also \cite[(3.5)]{TZ3} for the case when $N$ is singular)
\begin{equation}\label{schwarz}
\omega(t)\geq C^{-1}f^*\omega_N,
\end{equation}
on $M\times [0,\infty).$

Next, using the results in \cite{Bam}, in \cite[Theorem 1.1]{JS} it was very recently proved that
\begin{equation}\label{diambd}
\mathrm{diam}(M,g(t))\leq C,
\end{equation}
uniformly for all $t\geq 0$.
Also, in \cite[Corollary 1.1]{JS} it is shown that there is a uniform $C$ such that for all $x\in M, 0<r<\mathrm{diam}(M,g(t)),t\geq 0$ we have
\begin{equation}\label{volbd}
C^{-1}e^{-(m-n)t}\leq \frac{\mathrm{Vol}_{g(t)}B^{g(t)}(x,r)}{r^{2n}}\leq Ce^{-(m-n)t}.
\end{equation}

We also have information about the collapsed limit space $(N^\circ, \omega_{\rm can})$. Thanks to our assumptions that $N$ is smooth and $D^{(1)}$ has simple normal crossings, we can apply \cite[Theorem 1.4]{GTZ3} which gives us that
\begin{equation}\label{quasiisom}
C^{-1}\omega_{\rm cone}\leq\omega_{\rm can}\leq C\left(1-\sum_{i=1}^\mu\log|s_i|_{h_i}\right)^A \omega_{\rm cone},
\end{equation}
holds on $N\backslash D^{(1)}$, where $\omega_{\rm cone}$ is a K\"ahler metric with conical singularities along $D^{(1)}=\bigcup_i D_i$.
Also, in \cite[Prop.3.1]{GTZ3} it is shown that $\omega_{\rm can}$ extends to a smooth K\"ahler metric across $D^{(2)}$, so without loss we may assume that $D=D^{(1)}$ is a simple normal crossings divisor.

If we denote by $d_{\rm can}$ the associated distance function on $N^\circ$, then it is shown in \cite[Theorem 6.1]{GTZ3} (using \cite[(2.7)]{TZ2}) and also in \cite[Proposition 2.2]{STZ} that $(N^\circ,d_{\rm can})$ has finite diameter, and so its metric completion $(Z,d_Z)$ is a compact metric space which by \cite[Proposition 2.3]{STZ} is homeomorphic to $N$ (here we use that $N$ is smooth).
Also, \cite[Proposition 2.2]{STZ} shows that for every $p,q\in N^\circ$ and $\delta>0$ there is a path $\gamma$ in $N^\circ$ joining $p$ and $q$ with
\begin{equation}\label{almostc}
L_{g_{\rm can}}(\gamma)\leq d_Z(p,q)+\delta.
\end{equation}
We can call this the ``almost convexity'' of $(N^\circ,d_{\rm can})$ inside its metric completion.

There is also a more localized version of this almost convexity. Let us introduce the following notation: for any $\ve>0$ let $U_\ve\subset N$ be the $\ve$-neighborhood of $D$ with respect to the fixed metric $\omega_N$ on $N$, and let $\ti{U}_\ve=f^{-1}(U_\ve)\subset M$.
Then in \cite[Proposition 2.2]{STZ} it is shown that given any $\delta,\ve'>0$ sufficiently small, there is $0<\ve<\ve'$ such that for every $p',q'\in N\backslash U_{\ve'}$ there is a path $\gamma$ in $N\backslash U_{\ve}$ joining $p'$ and $q'$ such that \eqref{almostc} holds.
But thanks to the upper bound in \eqref{quasiisom} it follows that for every $p\in N\backslash U_{\ve}$ there is $p'\in N\backslash U_{\ve'}$ which can be joined to $p$ by a path which is contained in $N\backslash U_{\ve}$ and with $g_{\rm can}$-length at most $\delta$. Concatenating this path, the path $\gamma$, and the analogous path joining $q$ and $q'$ insider $N\backslash U_\ve$ we conclude that given $\delta>0$ there is $\ve>0$ such that for every $p,q\in N\backslash U_{\ve}$ there is a path $\gamma$ in $N\backslash U_{\ve}$ joining them such that \eqref{almostc} holds. We will call this the almost-convexity of $(N\backslash U_\ve,d_{\rm can})$.

It is also possible to avoid using \eqref{quasiisom} as follows: using \eqref{4} and passing to the limit on $N\backslash U_\ve$
shows that $d_{\rm can}$ has a local H\"older bound there (with respect to $g_N$), and we conclude the localized almost convexity statement since the $g_N$-distance from $p$ to $\de U_{\ve '}$ is $O(\ve')$.

\subsection{Reduction of Theorem \ref{krf_gh} to Proposition \ref{claim4}}
\begin{proof}[Proof of Theorem \ref{krf_gh}]
From the volume form bound \eqref{volscal} we see that for any given $\delta>0$ there are $\ve=\ve(\delta)<\delta, T>0$ such that for all $t\geq T$ we have
\begin{equation}\label{stupid}
\frac{\mathrm{Vol}(\ti{U}_\ve,\omega(t))}{\mathrm{Vol}(M,\omega(t))}\leq\delta.
\end{equation}
We can also assume that $\ve$ is small enough so that the above-mentioned almost-convexity property of $N\backslash U_{\ve}$ holds, and we fix this value of $\ve(\delta)$ for the rest of the proof. Also, in all of the following, $\delta'>0$ will be a positive number that depends on $\delta$ and satisfies $\delta'(\delta)\to 0$ as $\delta\to 0$, which may change from line to line.\\

{\bf Claim 1. } For every $p\in U_\ve$ we have
\begin{equation}\label{triv}
d_Z(p,\de U_\ve)\leq\delta'.
\end{equation}

Recall here that $(Z,d_Z)$ denotes the metric completion of $(N^\circ,d_{\rm can})$.  Claim 1 follows easily from the upper bound for $\omega_{\rm can}$ in \eqref{quasiisom}, however we can also argue in a different way without using \eqref{quasiisom}, as follows. We employ the family of metrics $\omega_j$ in Proposition \ref{gh} that regularize $\omega_{\rm can}$ and have the property that $(N,\omega_j)\to (Z, d_Z)$ in the Gromov-Hausdorff sense. Thanks to Cheeger-Colding's extension of Colding's volume convergence theorem \cite[Theorem 5.9]{CC}, the volume noncollapsing bound in \eqref{3} implies that there is $C$ such that for all $x\in Z$ and $0<r<\mathrm{diam}(Z,d_Z)$ we have
\begin{equation}\label{balla1}
\mathcal{H}^{2n}(B^{d_Z}(x,r))\geq Cr^{2n},
\end{equation}
where here $\mathcal{H}^{2n}$ denotes the $2n$-dimensional Hausdorff measure. By definition we have an isometric embedding $\iota:(N^\circ,d_{\rm can})\hookrightarrow (Z,d_Z)$ and it is shown in \cite[p.758]{TZ2} that
\begin{equation}\label{meas1}
\mathcal{H}^{2n}(Z\backslash\iota(N^\circ))=0.
\end{equation}
On the other hand, on $N^\circ$ the renormalized limit measure $\nu$ is proportional to $\omega_{\rm can}^n$ by \cite[p.758]{TZ2}, and since this is proportional to $e^\vp f_*(\mathcal{M})$ with $\vp$ bounded, it follows that
\begin{equation}\label{meas2}
\int_{U_\ve\backslash D}\omega_{\rm can}^n\leq C\int_{\ti{U}_\ve}\omega_M^m\leq \delta',
\end{equation}
and so if we identify $U_\ve$ with its image in $Z$ under the homeomorphism $N\cong Z$, it follows from \eqref{meas1} and \eqref{meas2} that
\begin{equation}\label{balla2}
\mathcal{H}^{2n}(U_\ve)\leq \delta',
\end{equation}
and Claim 1 follows from \eqref{balla1} and \eqref{balla2}.\\

{\bf Claim 2. }We have
$$d_{\rm GH}((Z,d_Z),(N\backslash U_\ve,d_{\rm can}))\leq \delta', \quad {\rm where }\ \delta'(\delta)\to 0\ {\rm as }\ \delta\to 0.$$
We emphasize that here $(N\backslash U_\ve,d_{\rm can})$ denotes the restriction of the metric $d_{\rm can}$ from $N^\circ$ to the subset $N\backslash U_\ve$. However, by the almost-convexity property of $N\backslash U_\ve$, this differs from the distance induced by the metric $\omega_{\rm can}$ on $N\backslash U_\ve$ by at most $\delta$, so these two distances on $N\backslash U_\ve$ can be safely interchanged in our arguments.

To prove Claim 2 we use Claim 1 that allows us to define a map $F:Z\cong N\to N\backslash U_\ve$ (in general discontinuous) which is the identity on $N\backslash U_\ve$ and inside $U_\ve$ it maps $p$ to a point $q\in \de U_\ve$ with $d_Z(p,q)\leq\delta'$ (which is not unique, but we just choose any one of them). Let $G:N\backslash U_\ve\to N\cong Z$ denote the inclusion. It is elementary to check that $F$ and $G$ are a $3\delta'$-GH approximation, using the almost convexity property \eqref{almostc} and the fact that replacing $p$ by $q$ only distorts the $d_Z$-distance function by $\delta'$. For the reader's convenience we spell out the details, since a similar argument will also be used later: we need to show the following properties
\begin{equation}\label{1b}
d_Z(x,G(F(x)))\leq \delta', \quad x\in N,
\end{equation}
\begin{equation}\label{2b}
d_{\rm can}(y,F(G(y)))\leq \delta', \quad y\in N\backslash U_\ve,
\end{equation}
\begin{equation}\label{3b}
|d_Z(x,x')-d_{\rm can}(F(x),F(x'))|\leq 3\delta', \quad x,x'\in N,
\end{equation}
\begin{equation}\label{4b}
|d_{\rm can}(y,y')-d_Z(G(y),G(y'))|\leq \delta', \quad y,y'\in N\backslash U_\ve.
\end{equation}
Estimate \eqref{1b} is trivial when $x\in N\backslash U_\ve$, and follows from \eqref{triv} when $x\in U_\ve$. Estimate \eqref{2b} is trivial. Next, given any $x,x'\in N\backslash U_\ve$ using \eqref{almostc} we see that
\begin{equation}\label{5b}
d_Z(x,x')\leq d_{\rm can}(x,x')\leq d_Z(x,x')+\delta'.
\end{equation}
This immediately implies \eqref{4b}, so it remains to check \eqref{3b}, and for this we consider three cases. First, if $x,x'\in N\backslash U_\ve$ then \eqref{3b} follows from \eqref{5b}. Second, suppose $x\in N\backslash U_\ve, x'\in U_\ve.$ Then using the almost-convexity in \eqref{triv} we have
\begin{equation}\label{triv2}
d_Z(x',F(x'))\leq\delta',
\end{equation}
and using this and \eqref{5b} we obtain
$$d_Z(x,x')\leq d_Z(x,F(x'))+\delta'=d_Z(F(x),F(x'))+\delta'\leq d_{\rm can}(F(x),F(x'))+\delta',$$
and also
$$d_{\rm can}(F(x),F(x'))\leq d_Z(F(x),F(x'))+\delta'\leq d_Z(x,x')+d_Z(x',F(x'))+\delta'\leq d_Z(x,x')+2\delta',$$
proving \eqref{3b} in this case. Thirdly, suppose $x,x'\in U_\ve$, and use again \eqref{5b} and \eqref{triv2} to bound
\[\begin{split}
d_Z(x,x')&\leq d_Z(x,F(x))+d_Z(x',F(x'))+ d_Z(F(x),F(x'))\\
&\leq d_Z(F(x),F(x'))+2\delta'\leq d_{\rm can}(F(x),F(x'))+2\delta',
\end{split}\]
and
\[\begin{split}
d_{\rm can}(F(x),F(x'))&\leq d_Z(F(x),F(x'))+\delta'\\
&\leq d_Z(x,F(x))+d_Z(x',F(x'))+d_Z(x,x')+\delta'\leq d_Z(x,x')+3\delta',
\end{split}\]
completing the proof of \eqref{3b} and of  Claim 2.\\

Next, recall from \eqref{twyconv} that away from $S$ we have locally uniform convergence of $\omega(t)$ to $f^*\omega_{\rm can}$. Since $f:M^\circ\to N^\circ$ is a $C^\infty$ fiber bundle it follows easily that,
up to making $T$ larger, we have
\begin{equation}\label{claim2a}
d_{\rm GH}((N\backslash U_\ve,\omega_{\rm can}),(M\backslash\ti{U}_\ve,\omega(t)))\leq \delta,
\end{equation}
for all $t\geq T$, see e.g. \cite[Theorem 5.23]{To}. But as a consequence of almost-convexity, the distance function given by $(N\backslash U_\ve,\omega_{\rm can})$ differs from $(N\backslash U_\ve,d_{\rm can})$ by at most $\delta$,  so we also have
\begin{equation}\label{claim2}
d_{\rm GH}((N\backslash U_\ve,d_{\rm can}),(M\backslash\ti{U}_\ve,\omega(t)))\leq 2\delta,
\end{equation}
for all $t\geq T$.
Lastly, we have the following claim:\\

{\bf Claim 3. }Up to making $T$ larger, we have
$$d_{\rm GH}((M\backslash\ti{U}_\ve,\omega(t)),(M,\omega(t)))\leq   \delta', \quad {\rm where }\ \delta'(\delta)\to 0\ {\rm as }\ \delta\to 0,$$
for all $t\geq T$.\\

Combining Claims 2 and 3 with \eqref{claim2} we conclude that
$$(M,\omega(t))\to (Z,d_Z),$$
in the Gromov-Hausdorff topology as $t\to \infty$, which will complete the proof of Theorem \ref{krf_gh}.

The proof of Claim 3 relies heavily on the following statement, which can be thought of as  an almost-convexity of $(M\backslash \ti{U}_\ve,\omega(t))$ inside $(M,\omega(t))$ uniformly in $t\geq T$. Denote by $d_t$ the distance function of $(M,\omega(t))$ and by $\hat{d}_t$ the distance function of $(M\backslash\ti{U}_\ve,\omega(t))$, so that we trivially have $d_t(x,x')\leq \hat{d}_t(x,x')$ for all $x,x'\in M\backslash \ti{U}_\ve$. Then we have:

\begin{prop}\label{claim4}
Given $\delta>0$ there are $\delta',T>0$, with $\delta'(\delta)\to 0$ as $\delta\to 0$, such that for all $x,x'\in M\backslash \ti{U}_\ve$ and all $t\geq T$ we have
\begin{equation}\label{5}
d_t(x,x')\leq \hat{d}_t(x,x')\leq d_t(x,x')+\delta'.
\end{equation}
\end{prop}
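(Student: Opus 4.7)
The plan is to prove Proposition \ref{claim4} by showing that any nearly minimizing $\omega(t)$-geodesic between two points of $M\setminus\ti{U}_\ve$ can be surgered, with only a small added length cost, into a competitor path that stays inside $M\setminus\ti{U}_\ve$. Since $d_t(x,x')\leq \hat{d}_t(x,x')$ is trivial, only the reverse bound in \eqref{5} is at stake. Fix $x,x'\in M\setminus\ti{U}_\ve$ and pick a nearly minimizing $\omega(t)$-geodesic $\gamma:[0,1]\to M$ with $L_{\omega(t)}(\gamma)\leq d_t(x,x')+\delta/10$. The open set $\gamma^{-1}(\ti{U}_\ve)\cap(0,1)$ decomposes as a disjoint union $\bigsqcup_k(a_k,b_k)$ of maximal intervals, and at each endpoint $\gamma(a_k),\gamma(b_k)\in \de\ti{U}_\ve=f^{-1}(\de U_\ve)$. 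The idea is to replace each $\gamma|_{[a_k,b_k]}$ by a path $\hat\sigma_k$ in $M\setminus\ti{U}_\ve$ whose $\omega(t)$-length exceeds $L_{\omega(t)}(\gamma|_{[a_k,b_k]})$ by at most a summable amount $\delta/2^{k+2}$.

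For the construction of $\hat\sigma_k$, I would project the endpoints to $f(\gamma(a_k)),f(\gamma(b_k))\in\de U_\ve$ and use the almost-convexity of $(N\setminus U_\ve,d_{\rm can})$ established in \cite{STZ} to connect them by a path $\sigma_k\subset N\setminus U_\ve$ of $\omega_{\rm can}$-length at most $d_Z(f(\gamma(a_k)),f(\gamma(b_k)))+\delta/2^{k+3}$. Since $f:M^\circ\to N^\circ$ is a $C^\infty$ fiber bundle, $\sigma_k$ lifts to a path $\hat\sigma_k\subset M\setminus\ti{U}_\ve$ joining $\gamma(a_k)$ to $\gamma(b_k)$. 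On the fixed compact set $N\setminus U_\ve$, the locally uniform convergence \eqref{twyconv} together with the fiber-collapsing bound \eqref{volscal} makes the $\omega(t)$-length of the lift asymptotically equal to the $\omega_{\rm can}$-length of $\sigma_k$, with the horizontal part dominating, giving $L_{\omega(t)}(\hat\sigma_k)\leq L_{\omega_{\rm can}}(\sigma_k)+\delta/2^{k+3}$ for $t$ sufficiently large.

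The central and hardest step is then the comparison
\begin{equation*}
d_Z(f(\gamma(a_k)),f(\gamma(b_k)))\leq L_{\omega(t)}(\gamma|_{[a_k,b_k]})+\delta/2^{k+3}.
\end{equation*}
This is where the main obstacle lies, because inside $\ti{U}_\ve$ the flow $\omega(t)$ is not well understood and one cannot directly dominate $\omega(t)$-length by the $\omega_{\rm can}$-length of the projected arc. My plan is to reinterpret $\gamma|_{[a_k,b_k]}$ (or an equivalent minimizing spacetime curve with the same endpoints) as a Perelman $\mathcal{L}$-geodesic for the K\"ahler-Ricci flow: the endpoints sit in the ``good'' region $M\setminus\ti{U}_\ve$ but the curve may dip into $\ti{U}_\ve$ and close to $S$. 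The scalar curvature bound \eqref{stscal}, the diameter bound \eqref{diambd}, and Perelman's $\mathcal{L}$-length monotonicity give universal control on $\mathcal{L}$-length in terms of Riemannian length. The quantitative comparison between spacetime displacement on $M$ and base-displacement on $(N,d_Z)$ is then closed by exploiting estimate \eqref{desired}, which in our snc setting takes the explicit form \eqref{quasiisom} from \cite{GTZ3} and quantifies the $\omega_{\rm can}$-size of tubular neighborhoods of $D$; together with the parabolic Schwarz inequality \eqref{schwarz}, this will force the base-projection of an $\mathcal{L}$-geodesic straying into $\ti{U}_\ve$ to use up at least the corresponding $d_Z$-distance on $\de U_\ve$.

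Granting this key inequality, concatenating $\gamma$ outside $\ti{U}_\ve$ with the replacements $\hat\sigma_k$ produces a path $\hat\gamma\subset M\setminus\ti{U}_\ve$ from $x$ to $x'$ with
\begin{equation*}
L_{\omega(t)}(\hat\gamma)\leq L_{\omega(t)}(\gamma)+\sum_k \delta/2^{k+1}\leq d_t(x,x')+\delta',
\end{equation*}
so that $\hat d_t(x,x')\leq d_t(x,x')+\delta'$, which is the desired estimate. A subtle point that will need to be addressed is ensuring that only finitely many components $(a_k,b_k)$ contribute substantively, which should follow from the near-minimality of $\gamma$ together with the lower volume bound \eqref{volbd}; components with tiny length produce negligible detours whose $\omega_{\rm can}$-projections are short by the H\"older estimate in Proposition \ref{gh}. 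The entire argument hinges on the $\mathcal{L}$-geodesic comparison in the third paragraph, which I expect to be the main technical obstacle.
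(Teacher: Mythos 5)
Your proposal correctly spots the key ingredients (almost-convexity of the base, Perelman's $\mathcal{L}$-geodesics, the scalar curvature bound, the parabolic Schwarz lemma \eqref{schwarz}, and the explicit bound \eqref{quasiisom}), but the overall structure is different from the paper's, and as written it has two genuine gaps.

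\textbf{Structural difference.} The paper does not surgery a $d_t$-geodesic at all. It proves two inequalities that compose directly: (i) given $p,q\in M\setminus\ti{U}_\ve$, there is a path in $M\setminus\ti{U}_\ve$ from $p$ to $q$ of $\omega(t)$-length at most $d_{\rm can}(f(p),f(q))+C\delta'$, obtained by lifting an almost-minimizing base path (your $\hat\sigma_k$-type construction, but applied once, globally, rather than per excursion); and (ii) the hard inequality $d_{\rm can}(f(p),f(q))\le d_t(p,q)+\delta'$, proved by a single global $\mathcal{L}$-geodesic argument from $(p,0)$ to $(q',\bar\tau)$. Your plan instead decomposes $\gamma^{-1}(\ti U_\ve)$ into maximal intervals $(a_k,b_k)$ and proposes to prove a local version of (ii) for each segment.

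\textbf{Gap 1: the per-segment error cannot be made summable.} You want $d_Z(f(\gamma(a_k)),f(\gamma(b_k)))\le L_{\omega(t)}(\gamma|_{[a_k,b_k]})+\delta/2^{k+3}$ with errors decaying geometrically in $k$. There is no mechanism in the Perelman machinery (or anywhere in the available estimates) to make the error decay with $k$. Any Perelman-type comparison between $\mathcal{L}$-distance and fixed-time distance produces a \emph{fixed} error $\delta'$ depending on $\delta$, $\bar\tau$, $\eta$, etc., not a per-application exponential decay. Relatedly, your plan never explains how to pass from $\mathcal{L}$-distance back to fixed-time distance. In the paper this bridge is the $\bar L$-function argument: $\bar L(q',\tau)\to d_{\ti g(0)}(p,q')^2$ as $\tau\to 0^+$, Perelman's differential inequality \eqref{ddt} for $\bar L$, and an integration against a cutoff function $\chi$ near $q$ (the ``smearing'' you allude to but don't carry out). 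Without this step, the reduced distance $L(q',\bar\tau)$ is just a number with no direct relation to $L_{\omega(t)}(\gamma|_{[a_k,b_k]})$.

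\textbf{Gap 2: finiteness of the ``substantive'' segments.} You propose that short segments can be handled by the H\"older estimate from Proposition \ref{gh}: $d_{\rm can}(P,Q)\le C\,d_{g_N}(P,Q)^\alpha$. Combined with \eqref{schwarz} this gives $d_{\rm can}(f(\gamma(a_k)),f(\gamma(b_k)))\le C\ell_k^\alpha$ where $\ell_k=L_{\omega(t)}(\gamma|_{[a_k,b_k]})$. But $\sum_k\ell_k\le C$ does not give $\sum_k\ell_k^\alpha\le$ small for $\alpha<1$; in fact $\sum_k\ell_k^\alpha$ can be arbitrarily large if there are many tiny segments. So short excursions are \emph{not} automatically negligible. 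This is exactly the phenomenon the paper's closing remark addresses: if one could a priori bound the number of events by a uniform constant $A$, then the H\"older argument would close, but this is not available. The paper's actual solution is different: it works with an $\mathcal L$-geodesic (not a $d_t$-geodesic), uses Perelman's $\mathcal{L}$-Jacobian monotonicity and a spacetime-volume count to bound the total $\tau$-time spent in $\ti U_\eta$, and then exploits the explicit log-factor in \eqref{quasiisom} (not merely the H\"older estimate) so that the $g_{\rm can}$-cost of each event is controlled by $C|\log\eta|^C$ times the $\ti g(\tau)$-length spent there, giving a total contribution $C|\log\eta|^C\eta^\beta\to 0$ as $\eta\to 0$ even when the number of events is unbounded.

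In short, the conceptual ingredients are identified, but the segment-by-segment surgery structure, the summable error claim, the missing $\bar L$-bridge, and the H\"older-based finiteness heuristic are all places where the argument as outlined would not go through without essentially redoing the paper's global spacetime-volume argument.
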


Indeed, assuming Proposition \ref{claim4}, the proof of Claim 3 is completely analogous to the proof of Claim 2, and we briefly outline it. First, we have the analog of Claim 1, namely that, up to enlarging $T$, for all $t\geq T$ and $x\in \ti{U}_\ve$ we have
\begin{equation}\label{glenbenton}
d_t(x,\de \ti{U}_\ve)\leq   \delta', \quad {\rm where }\ \delta'(\delta)\to 0\ {\rm as }\ \delta\to 0.
\end{equation}
To see this we use the volume estimates \eqref{volscal} and \eqref{volbd}
which imply that for all $x\in M,0<r<\mathrm{diam}(M,g(t)),t\geq 0,$
$$\frac{\mathrm{Vol}(B^{g(t)}(x,r),\omega(t))}{\mathrm{Vol}(M,\omega(t))}\geq C^{-1}r^{2n},$$
and so \eqref{glenbenton} follows from this together with \eqref{stupid}.

Using \eqref{glenbenton}, for each $t\geq T$ we define a discontinuous map $F_t:M\to M\backslash\ti{U}_\ve$ which is the identity on $M\backslash \ti{U}_\ve$ and inside $U_\ve$ it maps $p$ to some point $q\in \de\ti{U}_\ve$ with $d_t(p,q)\leq\delta'$. One defines then $G:M\backslash\ti{U}_\ve\to M$ to be the inclusion, and using \eqref{5} one checks exactly as in Claim 2 that $F_t$ and $G$ give a $3\delta'$-GH approximation between $(M,d_t)$ and $(M\backslash\ti{U}_\ve,\hat{d}_t)$, thus proving Claim 3. The proof of Theorem \ref{krf_gh} is thus reduced to proving Proposition \ref{claim4}.

\subsection{Proof of Proposition \ref{claim4}}
The only nontrivial inequality to prove is $\hat{d}_t(x,x')\leq d_t(x,x')+\delta'$. For this, we first observe that given any $p,q\in M\backslash\ti{U}_\ve$ we know from the almost-convexity property of $(N\backslash U_\ve, d_{\rm can})$ that their images $f(p),f(q)\in N\backslash U_\ve$ can be joined by a path
$\gamma$ in $N\backslash U_\ve$ with
$$L_{g_{\rm can}}(\gamma)\leq d_{\rm can}(f(p),f(q))+\delta'.$$
Since $f$ is a smooth fiber bundle over $N\backslash U_\ve$, we can easily find a path $\ti{\gamma}$ in $M\backslash \ti{U}_\ve$ joining $p$ and $q$ with $f\circ\ti{\gamma}=\gamma$, see e.g. \cite[Theorem 5.23]{To}. Thanks to the uniform convergence in \eqref{twyconv}, and the fact that $d_{\rm can}(f(p),f(q))\leq C$ for some $C$ independent of $p,q,$ we see that (up to increasing $T$) for all $t\geq T$ we have
$$\hat{d}_t(p,q)\leq L_{g_t}(\ti{\gamma})\leq L_{f^*g_{\rm can}}(\ti{\gamma})+\delta'=L_{g_{\rm can}}(\gamma)+\delta'\leq d_{\rm can}(f(p),f(q))+2\delta',$$
so to complete the proof of \eqref{5} we are left with proving the following:\\

{\bf Claim 4. }Up to making $T$ larger, we have
\begin{equation}\label{finalgoal}
d_{\rm can}(f(p),f(q))\leq d_t(p,q)+\delta',
\end{equation}
for all $t\geq T$ and all $p,q\in M\backslash\ti{U}_\ve$.\\

We will sometimes tacitly replace $\delta'$ by $C\delta'$, and without loss we may assume that $d_{\rm can}(f(p),f(q))\geq \delta'$.
The rough idea to prove Claim 4 is to first replace the distance by a version of Perelman's reduced distance, and then use a smearing argument to show these two are roughly the same.

First, we shall reparametrize the flow. Let $T$ be a given large time, whose precise value will be determined at the end of the argument. Recall that our K\"ahler metrics satisfy the K\"ahler-Ricci flow
$$\frac{\de}{\de t}\omega(t)=-\Ric(\omega(t))-\omega(t),\quad\omega(0)=\omega_M.$$
If as usual we let $g(t)$ denote their associated Riemannian metrics, then the Riemannian metrics
$$\ti{g}(s)=e^{t-T}g(t-T), \quad s=\frac{1}{2}(e^{t-T}-1),$$
solve the standard Ricci flow
$$\frac{\de}{\de s}\ti{g}(s)=-2\Ric(\ti{g}(s)),\quad s\geq s_0:=\frac{1}{2}(e^{-T}-1),$$
with $\ti{g}(0)=g(T),$
and we can convert back from $\ti{g}(s)$ to $g(t-T)$ by
$$g(t-T)=\frac{\ti{g}(s)}{1+2s},\quad t=T+\log(1+2s).$$
The scalar curvature bound \eqref{stscal} translates to
\begin{equation}\label{stscal2}
\sup_M |R(\ti{g}(s))|\leq \frac{C}{1+2s},
\end{equation}
for all $s\geq s_0$.

As in \cite{Pe}, we let $\tau=-s$, and letting $\ti{g}(\tau)$ be the metrics $\ti{g}(s)$ with parameter $s=-\tau$, then these solve the backwards Ricci flow
\[
\frac{\de}{\de \tau}\ti{g}= 2\Ric(\ti{g}),\quad \ti{g}\big|_{\tau=0}=g(T).
\]
We will work with $0\leq\tau\leq\bar{\tau}\ll 1$, where the choice of $\bar{\tau}$ depends on $\delta$, to be specified. In particular, since $\ov{\tau}$ is small, it follows from \eqref{twyconv} that  $\ti{g}(\tau)$ is uniformly close to $f^*g_{\rm can}$ on $M\backslash \ti{U}_\ve$ for $0\leq\tau\leq\ov{\tau}$.

Following Perelman \cite[\S 7]{Pe} the $\mathcal{L}$-length of a curve $\gamma(\tau)$ in spacetime is defined by
$$\mathcal{L}(\gamma)=\int \sqrt{\tau} (R(\ti{g}(\tau))+ |\partial_\tau \gamma|^2_{\ti{g}(\tau)}) d\tau.$$
The $\mathcal{L}$-distance between two points in spacetime is the infimum of such, and given $p,q\in M\backslash\ti{U}_\ve$
following Perelman we will denote by $L(q,\ov{\tau})$ the $\mathcal{L}$-distance between $(p,\tau=0)$ and $(q, \tau=\bar{\tau})$.

To start, using the almost-convexity of $(N\backslash U_\ve,d_{\rm can})$ we can
join $f(p)$ and $f(q)$ by a path $\gamma$ inside $N\backslash U_\ve$ with length
$$L_{g_{\rm can}}(\gamma)\leq d_{\rm can}(f(p),f(q))+\delta',$$
and let $\ti{\gamma}$ be a lift to a path in $M\backslash \ti{U}_\ve$ joining $p$ and $q$.

We then parametrize $\ti{\gamma}$ by $\tau\in [0,\ov{\tau}]$ so that $|\de_\tau\ti{\gamma}|_{\ti{g}(\tau)}=\frac{A}{2\sqrt{\tau\ov{\tau}}}$ for all $0\leq\tau\leq\ov{\tau}$, where $$A=\int_0^{\ov{\tau}}|\de_\tau\ti{\gamma}|_{\ti{g}(\tau)}d\tau
\leq d_{\rm can}(f(p),f(q))+\delta',$$
using that $\ti{g}(\tau)$ is close to $f^*g_{\rm can}$ on $M\backslash \ti{U}_\ve$ for $0\leq\tau\leq\ov{\tau}$, and that $d_{\rm can}(f(p),f(q))\leq C$ by the diameter bound for $(N^\circ,d_{\rm can})$. Then, using the scalar curvature bound \eqref{stscal2}, we can estimate
\[\begin{split}
L(q,\ov{\tau}) &\leq\mathcal{L}(\ti{\gamma})=\int_0^{\ov{\tau}}\sqrt{\tau}((R(\ti{g}(\tau))+|\de_\tau\ti{\gamma}|^2_{\ti{g}(\tau)})d\tau
\leq C\ov{\tau}^{\frac{3}{2}}+\int_0^{\ov{\tau}}\sqrt{\tau}|\de_\tau\ti{\gamma}|^2_{\ti{g}(\tau)}d\tau\\
&=
C\ov{\tau}^{\frac{3}{2}}+\frac{A^2}{2\sqrt{\ov{\tau}}}\\
&\leq C\ov{\tau}^{\frac{3}{2}}+\frac{1}{2\sqrt{\ov{\tau}}}(d_{\rm can}(f(p),f(q))+\delta')^2,
\end{split}\]
and we can absorb the term with $\ov{\tau}^{\frac{3}{2}}$ into the term with $\delta'$ by choosing $\bar{\tau}\ll \delta'$, thus obtaining
\begin{equation}\label{Lupperbd}
L(q,\ov{\tau})\leq \mathcal{L}(\ti{\gamma}) \leq  \frac{1}{2\bar{\tau}^{1/2}} (d_{\rm can}(f(p),f(q))+C\delta')^2.
\end{equation}
The same is true if $q$ is replaced by any point $q'$ with $f(q')\in B^{g_{\rm can}}(f(q),\delta')$.

We wish to show the almost matching lower bound
\begin{equation}\label{Llowerbd}
L(q,\ov{\tau})\geq \frac{1}{2\bar{\tau}^{1/2}} (d_{\rm can}(f(p),f(q))-C\delta')^2.
\end{equation}
By the triangle inequality, and up to a small modification of $\bar{\tau}$ to $\bar{\tau}(1+O(\delta'))$, it is enough to prove
\begin{equation}\label{Llowerbd2}
L(q',\ov{\tau})\geq \frac{1}{2\bar{\tau}^{1/2}} (d_{\rm can}(f(p),f(q))-C\delta')^2
\end{equation}
for some  $q'$ with $f(q')\in B^{g_{\rm can}}(f(q),\delta')$. The main enemy is that the $\mathcal{L}$-geodesics can go into the region $\ti{U}_\ve$ where we do not have much control of the metric.

On any minimal $\mathcal{L}$-geodesic $\gamma$ from $(p,0)$ to $(q', \bar{\tau})$, thanks to \eqref{Lupperbd} (applied with $q$ replaced by $q'$) and the diameter bound for $(N^\circ,d_{\rm can})$ we see that
\begin{equation}\label{Lupper}
\mathcal{L}(\gamma)=L(q',\ov{\tau})\leq C\bar{\tau}^{-1/2}(d_{\rm can}(f(p),f(q'))+C\delta')^2\leq C\bar{\tau}^{-1/2}.
\end{equation}
Fix a $g_{\rm can}$-ball $B\Subset N\backslash U_\ve$ centered at $f(p)$ of some radius $r>0$ (which depends only on $\ve$), let $\ti{B}=f^{-1}(B)$, and let $0<\ov{\tau}'\leq\ov{\tau}$ be the minimum between $\frac{1}{2}\ov{\tau}$ and the first time when the curve $\gamma$ exits $\ti{B}$. Since $\ti{g}(\tau)$ is uniformly close to $f^*g_{\rm can}$ along $\gamma(\tau)$ for $0\leq\tau\leq \ov{\tau}'$, using \eqref{Lupper} we have
\[\begin{split}
r&\leq C\int_0^{\ov{\tau}'}|\de_\tau\gamma|_{\ti{g}(\tau)}d\tau\\
&\leq C\left(\int_0^{\ov{\tau}'}\sqrt{\tau}|\de_\tau\gamma|^2_{\ti{g}(\tau)}d\tau\right)^{\frac{1}{2}}\left(\int_0^{\ov{\tau}'}\frac{1}{\sqrt{\tau}}d\tau\right)^{\frac{1}{2}}\\
&\leq C\ov{\tau}'^{\frac{1}{4}}\left(C\ov{\tau}'^{\frac{3}{2}}+\int_0^{\ov{\tau}'}\sqrt{\tau}(R(\ti{g}(\tau))+|\de_\tau\gamma|^2_{\ti{g}(\tau)})d\tau\right)^{\frac{1}{2}}\\
&\leq C\ov{\tau}'^{\frac{1}{4}}\left(C\ov{\tau}'^{\frac{3}{2}}+C\bar{\tau}^{-\frac{1}{2}}\right)^{\frac{1}{2}}\\
&\leq C\ov{\tau}'^{\frac{1}{4}}\ov{\tau}^{-\frac{1}{4}},
\end{split}\]
i.e.
\begin{equation}\label{lowertau}
\ov{\tau}'\geq C^{-1}\ov{\tau}.
\end{equation}

Perelman \cite[\S 7.1]{Pe} showed that
\begin{equation}\label{jacob}
\tau^{-m} \exp(  - l(\tau) ) J(\tau)
\end{equation}
is nonincreasing in $\tau$ along an $\mathcal{L}$-geodesic, where $l(q,\tau)=\frac{1}{2\sqrt{\tau}}L(q,\tau)$ is the reduced length and $J$ is the Jacobian of the $\mathcal{L}$-exponential. Thanks to \eqref{Lupper} we have
\begin{equation}\label{reducedl}
l(q',\ov{\tau})\leq \frac{C}{\ov{\tau}}.
\end{equation}
Thus, Perelman's monotonicity together with \eqref{lowertau} and \eqref{reducedl} gives that for $\ov{\tau}'\leq\tau\leq\ov{\tau}$ we have
\begin{equation}\label{jacob2}
J(\tau)\geq \left(\frac{\tau}{\ov{\tau}}\right)^me^{l(\tau)-l(\ov{\tau})}J(\ov{\tau})\geq C^{-1}e^{-\frac{C}{\ov{\tau}}}J(\ov{\tau}),
\end{equation}
on $M$.

Consider the set $\Gamma$ of all the minimal $\mathcal{L}$-geodesics from $(p,0)$ to $(q', \bar{\tau})$ with $q'$ such that $f(q')\in B^{g_{\rm can}}(f(q),\delta')$, and consider the subset $E\subset [\ov{\tau}',\ov{\tau}]\times M$ defined by
$$E=\bigcup_{\gamma\in\Gamma}\{(\tau,\gamma(\tau))\ |\ \bar{\tau}'\leq \tau\leq \bar{\tau}\}.$$
Writing $E_\tau=E\cap(\{\tau\}\times M)$ (viewed as a subset of $M$), the spacetime volume of the region $E$ is defined by
$$\mathrm{Vol}(E):=\int_{\ov{\tau}'}^{\ov{\tau}}\int_{E_\tau}\ti{\omega}(\tau)^m d\tau.$$
Let $\mathcal{L}\mathrm{exp}_{p,\tau}:T_pM\to M$ be the $\mathcal{L}$-exponential map based at $p$ with parameter $\tau$. Then $E_\tau=\mathcal{L}\mathrm{exp}_{p,\tau}(F)$ where $F\subset T_pM$ is a $\tau$-independent open subset, and up to sets of measure zero $\mathcal{L}\mathrm{exp}_{p,\tau}$ is a diffeomorphism between $F$ and $E_\tau$, see the discussion in \cite[\S 17]{KL} or \cite[\S 8]{CCGGIIKLLN}.
Equipping $T_pM$ with the Euclidean metric $\ti{g}_p(0)$, and letting $dv$ be its volume element, we can write
$$\int_{E_\tau}\ti{\omega}(\tau)^m=\int_{F}J(\tau) dv,$$
and thanks to \eqref{jacob2}, for all $0\leq\tau\leq\ov{\tau}$ we can estimate
$$\int_{F}J(\tau) dv\geq C^{-1}e^{-\frac{C}{\ov{\tau}}}\int_{F}J(\ov{\tau}) dv=
C^{-1}e^{-\frac{C}{\ov{\tau}}}\int_{E_{\ov{\tau}}}\ti{\omega}(\ov{\tau})^m,$$
but up to sets of measure zero, $E_{\ov{\tau}}$ equals $f^{-1}(B^{g_{\rm can}}(f(q),\delta'))$, whose volume with respect to $\omega_M^m$ is at least $C^{-1}\delta'^{2n}$. Using the volume form bound \eqref{volscal} we thus see that
$$\int_{E_\tau}\ti{\omega}(\tau)^m\geq C^{-1}e^{-\frac{C}{\ov{\tau}}}\delta'^{2n}e^{-(m-n)T},$$
and using that $\ov{\tau}'\leq\frac{1}{2}\ov{\tau}$ by definition, we conclude that
\begin{equation}\label{vole}
\mathrm{Vol}(E)\geq C^{-1}\ov{\tau}e^{-\frac{C}{\ov{\tau}}}\delta'^{2n}e^{-(m-n)T}.
\end{equation}

Next, for $0<\eta\ll 1$, to be chosen later depending on $\delta',\ov{\tau}$, and for $\ov{\tau}'\leq\tau\leq\ov{\tau}$, we have using \eqref{volscal}
\[\begin{split}
\int_{\ti{U}_\eta}\ti{\omega}(\tau)^m&\leq C\int_{\ti{U}_\eta}\omega(T)^m\leq Ce^{-(m-n)T}\int_{\ti{U}_\eta}\omega_M^m\\
&\leq C\eta e^{-(m-n)T},
\end{split}\]
where we used that the $g_M$-volume of $\ti{U}_\eta$ is at most $C\eta$.
Thus the set given by $(\tau,\gamma(\tau))$ with $\gamma(\tau)\in \ti{U}_\eta$, which is a subset of $E$, has spacetime volume bounded above by
$$C\ov{\tau}\eta e^{-(m-n)T}\leq C\eta \delta'^{-2n}e^{C/\bar{\tau}}\mathrm{Vol}(E),$$
using \eqref{vole}.
In particular, on a typical minimal $\mathcal{L}$-geodesic in $\Gamma$, the $\tau$-time spent inside $\ti{U}_\eta$ is less than $C\bar{\tau}\eta \delta'^{-2n}e^{C/\bar{\tau}}$. For each such $\mathcal{L}$-geodesic $\gamma$ we split $[0,\ov{\tau}]$ into the subset $I$ defined by the property that  $\tau\in I\Leftrightarrow \gamma(\tau)\in \ti{U}_\eta$, and its complement $J=[0,\ov{\tau}]\backslash I$.
Then we have
$$|I|\leq C\bar{\tau}\eta \delta'^{-2n}e^{C/\bar{\tau}},$$ and thanks to \eqref{lowertau} we know that every $\tau\in I$ satisfies $\tau\geq C^{-1}\ov{\tau}.$ The same argument that we used to prove \eqref{Lupper} shows that
\begin{equation}\label{Lupper2}
\mathcal{L}(\gamma)\leq C\ov{\tau}^{-\frac{1}{2}}.
\end{equation}
Splitting
$$\int_0^{\bar{\tau} } |\partial_\tau \gamma|_{\ti{g}(\tau)}  d\tau=\int_I |\partial_\tau \gamma|_{\ti{g}(\tau)}  d\tau+\int_J |\partial_\tau \gamma|_{\ti{g}(\tau)}  d\tau,$$
we can then estimate using \eqref{stscal2}, \eqref{lowertau} and \eqref{Lupper2}
\begin{equation}\label{ggg1}
\begin{split}
\int_I |\partial_\tau \gamma|_{\ti{g}(\tau)}  d\tau&\leq C\ov{\tau}^{-\frac{1}{4}}\int_I \tau^{\frac{1}{4}}|\partial_\tau \gamma|_{\ti{g}(\tau)}  d\tau\\
&\leq C\ov{\tau}^{-\frac{1}{4}}\left(\int_I \sqrt{\tau}|\partial_\tau \gamma|^2_{\ti{g}(\tau)}  d\tau\right)^{\frac{1}{2}}|I|^{\frac{1}{2}}\\
&\leq C\ov{\tau}^{-\frac{1}{4}}\bar{\tau}^{\frac{1}{2}}\eta^{\frac{1}{2}} \delta'^{-n}e^{C/\bar{\tau}}
\left(C\ov{\tau}^{\frac{3}{2}}+\int_I \sqrt{\tau}(R(\ti{g}(\tau))+|\partial_\tau \gamma|^2_{\ti{g}(\tau)})  d\tau\right)^{\frac{1}{2}}\\
&\leq C\ov{\tau}^{\frac{1}{4}}\eta^{\frac{1}{2}} \delta'^{-n}e^{C/\bar{\tau}}\left(C\ov{\tau}^{\frac{3}{2}}+
C\ov{\tau}^{-\frac{1}{2}}\right)^{\frac{1}{2}}\\
&\leq  C\eta^{\frac{1}{2}} \delta'^{-n}e^{C/\bar{\tau}},
\end{split}\end{equation}
and combining this estimate with \eqref{schwarz} we see that the $g_N$-distance traversed by $f(\gamma)$ inside $U_\eta$ is bounded above by
\begin{equation}\label{ggg2}
C\eta^{\frac{1}{2}} \delta'^{-n}e^{C/\bar{\tau}}\ll \eta^{\beta},
\end{equation}
for small enough $\eta$ and some fixed small positive exponent $\beta<\frac{1}{2}$.

The issue now is how to use these bounds to estimate from above the $g_{\rm can}$-length of the curve $f(\gamma)$. Outside of $\ti{U}_\eta$ we have that $\ti{g}(\tau)$ is uniformly close to $f^*g_{\rm can}$ (up to enlarging $T$, depending on our choice of $\eta$, which itself was chosen depending ultimately on $\delta$), so we have
\begin{equation}\label{ggg3}
\int_J |\partial_\tau \gamma|_{\ti{g}(\tau)}  d\tau\geq \int_J |\de_\tau f(\gamma)|_{g_{\rm can}}d\tau -C\delta',
\end{equation}
which will give us the desired bound for the $g_{\rm can}$-length of the portion of $f(\gamma)$ outside $U_\eta$ (i.e. when $\tau\in J$).
On the other hand, to estimate the the  $g_{\rm can}$-length of the portion of $f(\gamma)$ inside $U_\eta$ (i.e. when $\tau\in I$), we employ the metric bounds \eqref{quasiisom} as follows.

Consider the following events: $f(\gamma)$ enters $U_{\eta}$ and reaches $U_{\eta/2}$ before returning to the boundary of $U_{\eta}$. On the one hand, by definition, the $g_N$-distance traversed during this whole event is at least $\eta$, and on the other hand we have shown in \eqref{ggg2} that it is also less than $\eta^{\beta}$, the discussion is local: the event takes place in a local coordinate chart exhibiting $D$ as a simple normal crossings divisor. Let
$f(\gamma(\tau_{\rm entry}))$  and $f(\gamma(\tau_{\rm exit}))$ be the entry and exit points of one event, so that their $g_{\rm can}$-distance is at most
$$\int_{\tau_{\rm entry}}^{\tau_{\rm exit}}|\partial_\tau f(\gamma)|_{g_{\rm can}}d\tau.$$
By the explicit control \eqref{quasiisom} on $g_{\rm can}$, we have that
\begin{equation}\label{quasi}
C^{-1}\omega_{\rm cone}\leq \omega_{\rm can}\leq C\left(1-\sum_{i=1}^\mu\log|s_i|_{h_i}\right)^A \omega_{\rm cone}.
\end{equation}
For simplicity suppose first that $D$ has only one component, which in our local chart is given by $\{z_1=0\}$. Then we can assume without loss that in this chart the boundary of $U_\eta$ is given by $\{|z_1|=\eta\}$, and in our chart \eqref{quasi} reads
\[
C^{-1}\left(\frac{idz_1\wedge d\ov{z}_1}{|z_1|^{2(1-\gamma)}}+\sum_{j=2}^nidz_j\wedge d\ov{z}_j\right)\leq\omega_{\rm can}\leq C\left(1-\log|z_1|\right)^A \left(\frac{idz_1\wedge d\ov{z}_1}{|z_1|^{2(1-\gamma)}}+\sum_{j=2}^nidz_j\wedge d\ov{z}_j\right).
\]
The entry and exit points are both on $\{|z_1|=\eta\}$, have $g_{\rm can}$-distance at most $\int_{\tau_{\rm entry}}^{\tau_{\rm exit}}|\partial_\tau f(\gamma)|_{g_{\rm can}}d\tau$, and hence their $g_{\rm cone}$-distance is at most $C$ times that. Therefore there exists another path joining these entry and exit points, which is contained in the boundary of $U_{\eta}$ (in particular, it does not come into $U_{\eta/2}$) and whose
$g_{\rm cone}$-length is also at most $C\int_{\tau_{\rm entry}}^{\tau_{\rm exit}}|\partial_\tau f(\gamma)|_{g_{\rm can}}d\tau$, and hence whose $g_{\rm can}$-length is bounded above by
\[
C|\log \eta|^C \int_{\tau_{\rm entry}}^{\tau_{\rm exit}}|\partial_\tau \gamma|_{\ti{g}(\tau)}d\tau.
\]
The general case when in our chart we see several components of $D$ is dealt with similarly.
We perform this construction for all the events (which are disjoint). This gives a replacement $\gamma'$ of $f(\gamma)$,  staying outside $U_{\eta/2}$, agreeing with $f(\gamma)$ between the events, and whose $g_{\rm can}$-length traversed in each event is at most  $C|\log \eta|^C$ times the corresponding integral of $|\partial_\tau \gamma|_{\ti{g}(\tau)}$. Thus, using \eqref{ggg1}, \eqref{ggg2} and \eqref{ggg3},
\begin{equation}\label{ggg4}\begin{split}
d_{\rm can}(f(p),f(q))&\leq L_{g_{\rm can}}(\gamma') \leq \int_J|\de_\tau f(\gamma)|_{g_{\rm can}}d\tau+ C|\log \eta|^C \int_I|\partial_\tau \gamma|_{\ti{g}(\tau)}d\tau\\
&\leq
\int_J |\partial_\tau \gamma|_{\ti{g}(\tau)}  d\tau+C\delta'+ C|\log \eta|^C \eta^{\beta}\\
&\leq \int_0^{\ov{\tau}} |\partial_\tau \gamma|_{\ti{g}(\tau)}  d\tau+C\delta',
\end{split}
\end{equation}
choosing $\eta$ small enough. Since $\gamma$ here is a minimal $\mathcal{L}$-geodesic from $(p,0)$ to $(q',\ov{\tau})$, arguing as before we see that
\begin{equation}\label{ggg5}\begin{split}
\int_0^{\ov{\tau}} |\partial_\tau \gamma|_{\ti{g}(\tau)}  d\tau&\leq \left(\int_0^{\ov{\tau}}\sqrt{\tau}|\de_\tau\gamma|^2_{\ti{g}(\tau)}d\tau\right)^{\frac{1}{2}}\left(\int_0^{\ov{\tau}}\frac{1}{\sqrt{\tau}}d\tau\right)^{\frac{1}{2}}\\
&\leq \sqrt{2}\ov{\tau}^{\frac{1}{4}}\left(C\ov{\tau}^{\frac{3}{2}}+\int_0^{\ov{\tau}}\sqrt{\tau}(R(\ti{g}(\tau))+|\de_\tau\gamma|^2_{\ti{g}(\tau)})d\tau\right)^{\frac{1}{2}}\\
&=\sqrt{2}\ov{\tau}^{\frac{1}{4}}\left(C\ov{\tau}^{\frac{3}{2}}+L(q',\ov{\tau})\right)^{\frac{1}{2}},
\end{split}
\end{equation}
and so from \eqref{ggg4} and \eqref{ggg5} (again we can absorb the term with $\ov{\tau}^{\frac{3}{2}}$) we get
\[
L(q',\ov{\tau})\geq \frac{1}{2\bar{\tau}^{1/2}} (d_{\rm can}(f(p),f(q))-C\delta')^2
\]
as desired, for some $q'$ with $f(q')\in B^{g_{\rm can}}(f(q),\delta')$, which establishes \eqref{Llowerbd} and \eqref{Llowerbd2}.
As explained above, up to small modification to $\bar{\tau}$ this implies that the same statement \eqref{Llowerbd2} holds for all $q'$ with $f(q')\in B^{g_{\rm can}}(f(q),\delta')$.

The main difference between this statement and Claim 4 is that we wish to compute distance with respect to a fixed time metric, rather than evolving metrics. Again following \cite{Pe} we  consider $\bar{L}(q',\tau)= 2\sqrt{\tau} L(q',\tau)$. As $\tau\to 0^+$, the function $\bar{L}$ tends to  $d_{\ti{g}(0)}(p,q')^2$
(see \cite[Lemma 7.47]{CCGGIIKLLN}), and according to \cite[(7.15)]{Pe} we have
\begin{equation}\label{ddt}
\left(\frac{\partial}{\partial \tau}+\Delta_{\ti{g}(\tau)}\right)\bar{L} \leq 4m.
\end{equation}
Recall that \eqref{Llowerbd} gives
\begin{equation}\label{Lbar2}
\bar{L}(q', \bar{\tau}) \geq (d_{\rm can}(f(p),f(q))-C\delta')^2,
\end{equation}
for $q'$ with $f(q')\in B^{g_{\rm can}}(f(q),\delta')$, and \eqref{Lupperbd} gives
\begin{equation}\label{Lbar}
\bar{L}(q',\tau)\leq C,
\end{equation}
for all $0<\tau\leq \bar{\tau}$.

Let $\chi$ be a smooth (time-independent) cutoff function on $N$ supported in $B^{g_{\rm can}}(q,\delta')$ and equal to $1$ on $B^{g_{\rm can}}(q,\delta'/2)$, and denote by the same symbol its pullback to $M$ via $f$. Then by \eqref{schwarz} we have
\begin{equation}\label{Delta}
\sup_M|\Delta_{\ti{g}(\tau)} \chi|\leq C\delta'^{-2},
\end{equation}
for $0\leq \tau\leq\ov{\tau}$.  Integrating  the $\tau$-time evolution of $\int_M \chi \bar{L}(\cdot,\tau) \ti{\omega}^m(\tau)$ with respect to $\tau\in[0,\ov{\tau}]$, and using \eqref{ddt}, we obtain
\[\begin{split}
&\int_M \chi \bar{L}(\cdot,0)\ti{\omega}^m(0)\\
&=\int_M \chi \bar{L}(\cdot,\ov{\tau})\ti{\omega}^m(\bar{\tau}) - \int_0^{\bar{\tau}} \int_M  \chi\left(\frac{\de}{\de \tau}\bar{L}(\cdot,\tau)\right)\ti{\omega}^m(\tau)  d\tau -2\int_0^{\ov{\tau}}\int_M \chi\bar{L}(\cdot,\tau) R(\ti{g}(\tau))\ti{\omega}(\tau)^md\tau\\
  &\geq\int_M \chi \bar{L}(\cdot,\ov{\tau})\ti{\omega}^m(\bar{\tau}) + \int_0^{\bar{\tau}} \int_M  \bar{L}(\cdot,\tau) \Delta_{\ti{g}(\tau)} \chi\ti{\omega}^m(\tau) d\tau
  -4m \int_0^{\bar{\tau}} \int_M  \bar{L}(\cdot,\tau) \chi\ti{\omega}^m(\tau) d\tau \\
  &-2\int_0^{\ov{\tau}}\int_M \chi\bar{L}(\cdot,\tau) R(\ti{g}(\tau))\ti{\omega}(\tau)^md\tau,
\end{split}
\]
and employing \eqref{volscal}, \eqref{stscal2}, \eqref{Lbar} and \eqref{Delta} we can bound
\[\begin{split}
\int_M  \bar{L}(\cdot,\tau) \Delta_{\ti{g}(\tau)} \chi\ti{\omega}^m(\tau)&\geq -C\delta'^{-2}\int_{f^{-1}(B^{g_{\rm can}}(q,\delta'))}\ti{\omega}^m(\tau)\\
&\geq -C\delta'^{-2}e^{-(m-n)T}\int_{f^{-1}(B^{g_{\rm can}}(q,\delta'))}\omega_M^m\\
&\geq -Ce^{-(m-n)T}\delta'^{2n-2},
\end{split}\]
as well as
\[\begin{split}
-4m \int_M  \bar{L}(\cdot,\tau) \chi\ti{\omega}^m(\tau)&\geq -C\int_{f^{-1}(B^{g_{\rm can}}(q,\delta'))}\ti{\omega}^m(\tau)\\
&\geq -Ce^{-(m-n)T}\int_{f^{-1}(B^{g_{\rm can}}(q,\delta'))}\omega_M^m\\
&\geq -Ce^{-(m-n)T}\delta'^{2n},
\end{split}\]
and similarly
\[ -2\int_M \chi\bar{L}(\cdot,\tau) R(\ti{g}(\tau))\ti{\omega}(\tau)^m\geq -Ce^{-(m-n)T}\delta'^{2n},\]
and so, using also \eqref{Lbar2},
\[\begin{split}
& \int_M \chi \bar{L}(\cdot,0)\ti{\omega}^m(0)
\\
& \geq  \int_M\chi \bar{L}(\cdot,\ov{\tau})\ti{\omega}^m(\bar{\tau}) -C\bar{\tau}\delta'^{2n-2} e^{-(m-n)T}
\\
& \geq  (d_{\rm can}(f(p),f(q))-C\delta')^2\int_M \chi \ti{\omega}^m(\ov{\tau}) -C\bar{\tau}\delta'^{2n-2} e^{-(m-n)T}\\
&\geq  \left((d_{\rm can}(f(p),f(q))-C\delta')^2-C\bar{\tau} \delta'^{-2}\right)\int_M \chi \ti{\omega}^m(\ov{\tau}),
\end{split}
\]
where in the last line we used that $\int_M\chi\ti{\omega}(\tau)^m\geq C^{-1}\delta'^{2n}e^{-(m-n)T}$, which again comes from \eqref{volscal}. By
choosing $\bar{\tau}\leq C^{-1} \delta'^3$ we can ignore the term with $\bar{\tau} \delta'^{-2}$.

Now, integrating  the $\tau$-time evolution of $\int_M \chi \ti{\omega}^m(\tau)$ with respect to $\tau\in[0,\ov{\tau}]$ we obtain
$$\int_M \chi \ti{\omega}^m(\ov{\tau})-\int_M \chi \ti{\omega}^m(0)=2\int_0^{\ov{\tau}}\chi R(\ti{g}(\tau))\ti{\omega}(\tau)^m d\tau
\geq -C\ov{\tau}\delta'^{-2}e^{-(m-n)T},$$
and so
\begin{equation}\label{gr1}
\begin{split}
& \int_M \chi \bar{L}(\cdot,0)\ti{\omega}^m(0)
\\
&\geq (d_{\rm can}(f(p),f(q))-C\delta')^2\int_M \chi \ti{\omega}^m(0)-C\ov{\tau}\delta'^{-2}e^{-(m-n)T}.
\end{split}
\end{equation}

Now using $C^0$ metric convergence in the regular region, the $g(T)$-distance between $q$ and $q'$ is bounded by $C\delta'$ in the support of $\chi$, so
\begin{equation}\label{gr2}
\begin{split}
& \int_M \chi \bar{L}(\cdot,0)\ti{\omega}^m(0)
\\
&
= \int_M \chi d_{\ti{g}(0)}(p,\cdot)^2\ti{\omega}^m(0)=\int_M \chi d_{g(T)}(p,\cdot)^2\ti{\omega}^m(0)
\\
& \leq (d_{g(T)}(p,q)+C\delta' )^2 \int_M \chi  \ti{\omega}^m(0).
\end{split}
\end{equation}
Combining \eqref{gr1} and \eqref{gr2} and dividing by $\int_M \chi  \ti{\omega}^m(0)\geq C^{-1}\delta'^{2n}e^{-(m-n)T}$  gives
$$(d_{g(T)}(p,q)+C\delta' )^2\geq (d_{\rm can}(f(p),f(q))-C\delta')^2-C\ov{\tau}\delta'^{-2n-2},$$
and taking $\ov{\tau}\leq C^{-1}\delta'^{2n+4}$ we obtain
\begin{equation}\label{final}
d_{g(T)}(p,q)\geq d_{\rm can}(f(p),f(q))- C\delta'.
\end{equation}
This fixes our choice of $\ov{\tau}$, and hence of $\eta$, which finally also fixes how large $T$ has to be. In summary, we have shown that \eqref{final} holds for sufficiently large $T$, and this finally concludes the proof of Claim 4, and hence of Proposition \ref{claim4}.
\end{proof}

\begin{rmk}
There is only one point in the proof of Theorem \ref{krf_gh} where it was essential to use estimate \eqref{quasiisom} (which is where we use the assumption that $N$ is smooth and $D$ is snc), which is to prove \eqref{ggg4}.  In the proof of \eqref{ggg4} we had to deal with the rather artificial possibility that the minimal $\mathcal{L}$-geodesic $\gamma$ there wanders in and out of the neighborhood $\ti{U}_{\eta/2}$ an unbounded number of times (what we called ``events'' in the proof). Here we want to remark that if one can find such $\gamma$ such that the number of events is bounded above by a uniform constant $A$, then one can prove \eqref{ggg4} (and hence Theorem \ref{krf_gh}) dropping the snc assumption on $D^{(1)}$. Indeed, for each event as above, we can estimate the $d_{\rm can}$-distance between the entry point $P:=f(\gamma(\tau_{\rm entry}))$ and the exit point $Q:=f(\gamma(\tau_{\rm exit}))$ by using that on $N\backslash U_{\eta/2}$
$$d_{\rm can}(P,Q)\leq Cd_{g_N}(P,Q)^\alpha,$$
for some uniform $\alpha>0$, by passing \eqref{4} to the limit. Since
$$d_{g_N}(P,Q)\leq \int_{\tau_{\rm entry}}^{\tau_{\rm exit}}|\de_\tau f(\gamma)|_{g_N}d\tau,$$
we see that
we can join $P$ and $Q$ with a path whose $g_{\rm can}$ length is at most
$$C\left(\int_{\tau_{\rm entry}}^{\tau_{\rm exit}}|\de_\tau f(\gamma)|_{g_N}d\tau\right)^\alpha\leq
C\left(\int_{\tau_{\rm entry}}^{\tau_{\rm exit}}|\de_\tau \gamma|_{\ti{g}(\tau)}d\tau\right)^\alpha,$$
and using this path to replace the portion of  $f(\gamma)$ with $\tau_{\rm entry}\leq \tau\leq\tau_{\rm exit},$ and repeating this for all the $A$ events, we obtain a new path $\gamma'$ joining $f(p)$ and $f(q)$ for which we have
\[\begin{split}
L_{g_{\rm can}}(\gamma')&\leq \int_J|\de_\tau f(\gamma)|_{g_{\rm can}}d\tau+ C\sum_{i=1}^A \left(\int_{\tau_{{\rm entry},i}}^{\tau_{{\rm exit},i}}|\de_\tau \gamma|_{\ti{g}(\tau)}d\tau\right)^\alpha\\
&\leq \int_J |\partial_\tau \gamma|_{\ti{g}(\tau)}  d\tau+C\delta'+ CA^{1-\alpha}\left(\int_I|\de_\tau \gamma|_{\ti{g}(\tau)}d\tau\right)^\alpha \\
&\leq \int_0^{\ov{\tau}} |\partial_\tau \gamma|_{\ti{g}(\tau)}  d\tau+C\delta'+ CA^{1-\alpha}\eta^{\alpha\beta}\\
&\leq \int_0^{\ov{\tau}} |\partial_\tau \gamma|_{\ti{g}(\tau)}  d\tau+C\delta',
\end{split}\]
by choosing $\eta$ sufficiently small, which proves \eqref{ggg4}.
\end{rmk}

\end{document}